\newtheorem{defn0}{Definition}[section]
\newtheorem{prop0}[defn0]{Proposition}
\newtheorem{thm0}[defn0]{Theorem}
\newtheorem{lemma0}[defn0]{Lemma}
\newtheorem{claim0}[defn0]{Claim}
\newtheorem{corollary0}[defn0]{Corollary}
\newtheorem{example0}[defn0]{Example}
\newtheorem{remark0}[defn0]{Remark}
\newtheorem{assumption0}[defn0]{Assumption}
\newtheorem{conjecture0}[defn0]{Conjecture}
\newtheorem{notation0}[defn0]{Notation}
\newtheorem{question0}[defn0]{Question}
\newenvironment{definition}{\begin{defn0}\rm}{\end{defn0}}
\newenvironment{proposition}{\begin{prop0}}{\end{prop0}}
\newenvironment{theorem}{\begin{thm0}}{\end{thm0}}
\newenvironment{lemma}{\begin{lemma0}}{\end{lemma0}}
\newenvironment{remark}{\begin{remark0}\rm}{\end{remark0}}
\newcommand{\Gal}{{\mathrm {Gal}}}
\newcommand{\disc}{{\mathrm {disc }}}
\newcommand{\ord}{\mathrm{ord}}
\newcommand{\Pic}{\mathrm{Pic}}
\newcommand{\Norm}{\mathrm{N}}
\newcommand{\E}{\mathcal{E}}
\newcommand{\Trace}{{\mathrm{Tr}}}
\newcommand{\End}{{\mathrm{End}}}
\newcommand{\Z}{{\mathbb Z}}
\newcommand{\Q}{{\mathbb Q}}
\newcommand{\C}{{\mathbb C}}
\newcommand{\R}{{\mathbb R}}
\newcommand{\F}{{\mathbb F}}
\newcommand{\PP}{{\mathbb P}}
\newcommand{\cS}{{\mathcal S}}
\newcommand{\cX}{{\mathcal X}}
\newcommand{\cY}{{\mathcal Y}}
\newcommand{\cM}{{\mathcal M}}
\newcommand{\cO}{{\mathcal O}}
\newcommand{\cW}{{\mathcal W}}
\newcommand{\dP}{{\mathfrak P}}
\newcommand{\Res}{{\mathrm {Res}}}
\newcommand{\Spec}{{\mathrm {Spec}}}
\newcommand{\ra}{{\rightarrow}}
\newcommand{\Hom}{{\mathrm {Hom}}}
\newcommand{\WP}{{\mathrm {WP}}}
\newcommand{\car}{{\mathrm{char}}}
\newcommand{\CM}{\operatorname{CM}}
\title{Equations of hyperelliptic \\ Shimura curves}
\author{Santiago Molina\\
    \small Matemàtica Aplicada IV\\
  \small Universitat Politècnica de Catalunya
}
\begin{document}
\maketitle

\begin{abstract}
We describe an algorithm that computes explicit models of hyperelliptic Shimura
curves attached to an indefinite quaternion algebra over $\Q$ and Atkin-Lehner quotients of them.
It exploits Cerednik-Drinfeld's non-archimedean uniformisation of Shimura curves, a formula of Gross and Zagier for the endomorphism ring of Heegner points over Artinian rings and the connection between Ribet's bimodules and the specialization of Heegner points, as introduced in \cite{Mol}.
As an application, we provide a list of equations of Shimura curves and quotients of them obtained by our algorithm that had been conjectured by Kurihara.
 \end{abstract}

\section{Introduction}

Let $D$ be the reduced discriminant of an indefinite quaternion algebra $B$ over $\Q$ and let $N \geq 1$ be a positive integer, prime to $D$. Let $X_0^D(N)/\Q$ denote the Shimura curve over $\Q$ attached to an Eichler order
of level $N$ in $B$.

As it is well-known, in the classical modular case automorphic forms of
$X_0(N):=X_0^1(N)$ admit Fourier expansions around the cusp of infinity. This allows to compute explicit
generators of the field of functions of such curves. Also, explicit methods are known to
determine bases of the space of their regular differentials, which are used to compute
equations for them and their quotients by Atkin-Lehner involutions.

In the general case, $D>1$, the question of writing down explicit equations of curves $X_0^D(N)$ over $\Q $
remains quite unapproachable. The absence of cusps has been an obstacle for explicit approaches
to Shimura curves. Ihara \cite{Ihara} was probably one of the first to express an interest on this problem, and already found an equation for the genus $0$ curve $X_0^6(1)$, while challenged to find others. Since then, several authors have contributed to this question (Kurihara \cite{Kur2}, Jordan \cite{Jor2}, Elkies \cite{Elk}, Clark-Voight \cite{Voi} for genus $0$ or/and $1$, Gonzalez-Rotger \cite{GoRo1}, \cite{GoRo2} for genus $1$ and $2$).

Elkies computes equations for the list of Shimura curves that he deals with using their hyperbolic (rather than the non-Archimedean uniformisations at primes dividing the discriminant) uniformisations. His method has the advantage that allows the identification of Heegner points in the equation, but is limited to very small discriminants $D$ and levels $N$.

The methods of Gonzalez-Rotger are heavily based on Cerednik-Drinfeld's theory for the special fiber at $p\mid D$ and the arithmetic properties of Heegner points. It allows to work with larger $D$ and $N$ but is again subjected to sever restrictions: the genus must be at most $2$ and $J_0^D(N)$ must be isogenous to a product of elliptic curves. In addition, this method does not allow to locate Heegner points in the given model of the curve.
The present paper is in the line of \cite{GoRo2} and one of the aims is removing such strong restrictions.


More precisely, the aim of this note is to introduce an algorithm to compute equations for hyperelliptic Shimura curves with good reduction at $2$. For the sake of simplicity we restrict ourselves to the case $N=1$ and write $X_0^D=X_0^D(1)$, although we believe that the procedure can be easily generalized to the case of arbitrary square-free $N$. Polynomials defining equations of hyperelliptic curves are closely related to their set of \emph{Weierstrass points}. The set of Weierstrass points $\WP(X_0^D)$ of a hyperelliptic Shimura curve $X_0^D$ turns out to be a disjoint union of Heegner points: $$\WP(X_0^D)=\bigsqcup_i \CM(R_i),$$ for suitable orders $R_i$ in imaginary quadratic fields. As a consequence, $X_0^D$ admits an equation of the form
\begin{equation}\label{eqintro}
y^2=\prod_i p_i(x),
\end{equation}
where $p_i(x)$ is a polynomial attached to each set of Heegner points $\CM(R_i)$.

Let $\cX_0^D$ denote Morita's integral model of $X_0^D$. Over $\Z[1/2]$, $\cX_0^D$ will also be defined by an equation of the form \eqref{eqintro}. As we shall explain in detail, the specialization of Weierstrass points at the special fiber of $\cX_0^D$ at a prime $p$ can be exploited in order to compute the $p$-adic valuation of the discriminants $\disc(p_i)$ and resultants $\Res(p_1,p_j)$ of the above polynomials. We will make use of the theory of specialization of Heegner points introduced in \cite{Mol} in order to obtain such information.

Moreover, by means of the classical theory of complex multiplication we can also compute the splitting fields of each $p_i$. Exploiting the theory developed by Gross-Zagier in \cite{GrossZagier} we can further compute the leading coefficients of each $p_i$, once we have fixed a pair of Heegner points at infinity.

As a combination of all this data, we are able to compute an explicit model \eqref{eqintro} for $X_0^D$. The only algorithmic limitation of this method relies on the fact that it exploits certain instructions which are currently implemented (e.g. in \emph{MAGMA}) only for small degree field extensions. As long as the genus increases, the degrees of the fields involved in the computation become so large that make it impossible to proceed with the algorithm.

In \S \ref{SemiSthyp} we recall basic facts about semi-stable hyperelliptic curves and the specialization of their Weierstrass points. In \S \ref{hypShi} we introduce Shimura curves with special emphasis to the finite list of them which are hyperelliptic. In \S \ref{singespCM} we describe the singular specialization of Heegner points and in \S \ref{compudesc} we give an explicit recipe to compute it in terms of Ribet bimodules. In \S \ref{Esp_h_R1} we exploit the moduli interpretation of Shimura curves in order to compute the supersingular specialization of a suitable set of Heegner points. This is a crucial step in the computation of the leading coefficients of the polynomials involved, once we have fixed a pair of such Heegner points at infinity. In \S \ref{Algorithm} we present our algorithm and we devote \S \ref{S-S exmpl} and \S \ref{Ex55} to exhibit two examples of its implementation.

Finally, in \S \ref{ALquo} we explain how to adapt the algorithm to quotients of Shimura curves by Atkin-Lehner involutions. The degrees of the fields involved in the computation in this case are smaller and, consequently, we are able to compute more examples.
In \S \ref{results} we present a list of equations of Shimura curves and Atkin-Lehner quotients obtained by means of the algorithms introduced in the previous sections. These equations were unknown until now and were conjectured by Kurihara in \cite{Kur}.

\section{Semi-stable hyperelliptic curves}\label{SemiSthyp}

Let $X$ be a smooth, geometrically connected, projective curve of genus $g>1$ defined over a field $k$. It is said  that $X$ is a \emph{hyperelliptic curve} over $k$ if there exists a finite separable morphism
$X\rightarrow \PP^1_{k}$ of degree 2.  Whenever there is no risk of confusion about the field $k$ we shall only say that $X$ is hyperelliptic.
This is equivalent to the existence of an involution $\omega$ defined over $k$ such that the quotient curve $X/\omega$ has genus 0 and $k$-rational points. When this is the case, this involution is unique and is called \emph{the hyperelliptic involution}.  Moreover, it is well known that there exist functions $x,y\in k(X)$ satisfying a relation of the type
\begin{equation}\label{eq}
y^2+Q(x)y+P(x)=0, \;\; P,Q\in k[x], \;\; 2g+1\leq \max\{\deg P,2\deg
Q\}\leq 2g+2,
\end{equation}
and such that the function field of $X$ is $k(X)=k(x,y)$.
 The hyperelliptic involution $\omega$ is then given by $(x,y)\ra(x,Q(x)-y)$ and, for the particular case that $\car (k)\neq 2$, we can take $Q(x)=0$.
The set of $\overline k$-rational points of $X$ consists of the set
of  affine points defined by \eqref{eq}
together with a $k$-rational point at infinity if  $\deg(Q(x)^2-4P(x))=2g+1$, or a
pair of points at infinity if  $\deg(Q(x)^2-4P(x))=2g+2$. In the later
case, both points are either $k$-rational or Galois conjugate over a quadratic extension of $k$.

We shall denote by $\WP(X)$  the set of Weierstrass points of $X$.
It coincides with the set of fixed points of $\omega$. Hence, $\WP(X)$ contains the point at infinity in case $\deg(Q^2(x)-4
P(x))=2g+1$, and  all points of the form $(\gamma, Q(\gamma)/2)$ or $ (\gamma,\sqrt{P(\gamma)})$, depending whether $\car (k)\neq 2$ or not, where
$\gamma$ is a root of $R(x)=Q^2(x)-4P(x)$.

If $k=\Q$, a \emph{Weierstrass model} for $X$ is a model $\cW$ over $\Z$, i.e. a normal fibered surface over $\Spec(\Z)$ with generic fiber $X$, such that $\omega$ can be extended to an involution on $\cW$, which we still denote by $\omega$, and the quotient $\cW/\langle\omega\rangle$ is smooth over $\Z$. We shall also denote by $\WP(\cW)$ the set of fixed points of $\omega$ on $\cW$.
By \cite[Remark 3.5]{Knu-Klei}, every smooth model of $\PP^1_\Q$ is isomorphic to $\PP^1_\Z$. Hence, any Weierstrass model $\cW$ satisfies $\cW/\langle\omega\rangle=\PP^1_\Z$ and, by \cite[Lemme 1]{liu96}, $\cW$ is the projective closure of the affine curve defined by:
\begin{equation}\label{eqZ}
y^2+Q(x)y+P(x)=0, \;\; P,Q\in \Z[x], \;\; 2g+1\leq \max\{\deg P,2\deg
Q\}\leq 2g+2.
\end{equation}
Given such a hyperelliptic equation, we define the \emph{discriminant of the Weierstrass model} as follows:
\begin{equation}\label{Disc}
\Delta(\cW)=\left\{\begin{array}{cc}
2^{-4(g+1)}\disc(R(x)) & \mbox{if}\;\deg R(x)=2g+2\,,\\
2^{-4(g+1)}c^2\disc(R(x)) & \mbox{if}\;\deg R(x)=2g+1\,,
\end{array}\right.
\end{equation}
where $R(x)=Q(x)^2-4P(x)$ and $c$ is its leading coefficient. The special fiber $\cW_p$ of $\cW$ at $p$ is smooth over $\F_p$ if and only if $p\nmid\Delta(\cW)$ (c.f. \cite{liu96}).

Assume now that $k$ is algebraically closed, let $C$ be an algebraic curve over $k$, and let $x\in C(k)$. We say that $x$ is an ordinary double point if
\begin{equation}\label{eqodp}
\widehat{O_{C,x}}\simeq k[[u,v]]/(uv)\simeq k[[u,v]]/(u^2-v^2),
\end{equation}
where $\widehat{ O_{C,x}}$ is the completion of the local ring $O_{C,x}$.
A curve $C$ over $k$ is said to be  \emph{semi-stable} if it is reduced and all its singular points are ordinary double points.

Let $S$ be an affine Dedekind scheme of dimension 1, with fraction field $K$.
Let $C$ be a normal, connected, projective curve over $K$. A \emph{model of $C$ over $S$} is a normal fibered surface $\mathcal{C}\ra S$ together with an isomorphism of its generic fiber $f:\mathcal{C}_\eta\ra C$.
We say that the model  $\mathcal C\ra S$ is \emph{semi-stable} if for each $s\in S$ the geometric fiber $\mathcal C_s\times_{k(s)}\overline{k(s)}$ is semi-stable over $\overline{k(s)}$, where $k(s)$ stands for the residue field of $S$ at $s$.

\begin{proposition}\cite[Corollary 10.3.22]{liu02}
Let $\mathcal C\rightarrow S$ be a semi-stable model of a curve $C$. Let $s\in S$, and let $x\in \mathcal C_s$ be a singular point of $\mathcal C_s$.
Then there exists a Dedekind scheme $S'$, étale over $S$, such that any point $x'\in \mathcal C':=\mathcal C\times_S S'$ above $x$ lying on $\mathcal C'_{s'}$ is an ordinary double point in $\mathcal C'_{s'}\ra \Spec (k(s'))$.
Moreover,
\[
\widehat{O_{\mathcal C',x'}}\cong \widehat{O_{S',s'}}[[u,v]]/(uv-c) \quad c\in \mathfrak{m}_{s'}O_{S',s},
\]
where $\widehat{O_{\mathcal C',x'}}$ and $\widehat{O_{S',s'}}$ are the completions of $O_{\mathcal C',x'}$ and $O_{S',s'}$ respectively.

If $C$ is smooth, then $c\neq 0$. Let $e_x$ be the normalized valuation of $c$ in $O_{S',s'}$, then $e_x$ does not depend on the scheme $S'$ chosen.
\end{proposition}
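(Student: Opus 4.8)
The plan is to reduce to the case of a \emph{split} node by an étale base change, then to bring $\widehat{O_{\mathcal C,x}}$ into the desired normal form by an explicit change of variables, and finally to read off the last two assertions. For the reduction: since $\mathcal C\to S$ is semi-stable, the base change of $\mathcal C_s$ to $\overline{k(s)}$ is semi-stable, so the geometric point over $x$ is an ordinary double point; concretely, the two tangent directions of $\mathcal C_s$ at $x$ form a subscheme of $\PP^1_{k(s)}$ of degree $2$ whose support is defined over a finite \emph{separable} extension $L/k(s)$ (separability holds because the branches of a node are recorded by the reduced geometric fibre of the normalization). I would realise $L$ as the residue field $k(s')$ of a point $s'$ of an affine Dedekind scheme $S'$ étale over $S$ — this is elementary: lift a minimal polynomial of a primitive element of $L$ to a monic polynomial over $O_{S,s}$, take the corresponding finite extension, localise and spread out. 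The fibre product $\mathcal C':=\mathcal C\times_S S'$ is again a normal semi-stable model (normality is preserved under étale base change and the geometric fibres are unchanged), and as both the statement and the integer $e_x$ are insensitive to replacing $S$ by an étale cover, I may assume outright that $\widehat{O_{\mathcal C_s,x}}\cong k[[u_0,v_0]]/(u_0v_0)$ with $k=k(s)$.

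Next, set $A=\widehat{O_{S,s}}$, a complete discrete valuation ring with uniformiser $\pi$ and residue field $k$, and $B=\widehat{O_{\mathcal C,x}}$. Then $B$ is a $2$-dimensional complete Noetherian local ring with $B/\pi B\cong k[[u_0,v_0]]/(u_0v_0)$, and, the model being excellent, $B$ is a normal domain; in particular $\pi$ is a non-zero-divisor in $B$. Lifting $u_0,v_0$ to $u,v\in B$ one checks that $\mathfrak m_B=(\pi,u,v)$, whence, by completeness, the $A$-algebra map $R:=A[[u,v]]\twoheadrightarrow B$ is surjective. Now $R$ is a $3$-dimensional regular local ring, hence a unique factorisation domain, so its prime ideal $I:=\ker(R\to B)$, which has height $1$, is principal: $I=(F)$. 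Modulo $\pi$, the image $\bar F\in k[[u,v]]$ cuts out the split node; by the splitting lemma for power series — here we use that the node is split, so that the quadratic part of $\bar F$ factors as a product of two distinct linear forms over $k$ — there is a $k$-algebra automorphism of $k[[u,v]]$ carrying $(\bar F)$ onto $(uv)$. Lifting it to an $A$-automorphism of $R$ and absorbing a unit, I may assume $F=uv+\pi G(u,v)$ with $G\in A[[u,v]]$.

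The remaining task — and the crux of the argument — is to remove the perturbation $\pi G$. Writing $G=G(0,0)+u\,G_1(u)+v\,G_2(v)+uv\,G_3(u,v)$ one has
\[
F=uv\bigl(1+\pi G_3\bigr)+\pi u\,G_1(u)+\pi v\,G_2(v)+\pi G(0,0).
\]
Replacing $v$ by $v(1+\pi G_3)+\pi G_1(u)$, and then $u$ by a suitable modification — each step being a legitimate change of coordinates, since it alters the variable by an element of $\pi A+\mathfrak m_R^2$ and hence preserves a regular system of parameters of $R$ — one reaches, after re-expansion, $F=u_1v_1+\pi a_0+\pi^2H_1$ with $a_0\in A$ and $H_1\in R$. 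Iterating is a Newton-type approximation: at the $n$-th stage the error drops from $\pi^{2^{n-1}}R$ to $\pi^{2^{n}}R$ while the coordinate corrections lie in $\pi^{2^{n-1}}R$, so the coordinates converge $\mathfrak m_R$-adically and the constant converges $\pi$-adically, yielding $F=\hat u\hat v-c$ with $c\in\pi A$. Absorbing a unit into $\hat v$ lets me take $c$ to be a power of a uniformiser of $O_{S',s'}$; this gives the asserted isomorphism $\widehat{O_{\mathcal C',x'}}\cong\widehat{O_{S',s'}}[[u,v]]/(uv-c)$ with $c\in\mathfrak m_{s'}O_{S',s'}$, and reduction modulo $\mathfrak m_{s'}$ exhibits $x'$ as an ordinary double point of $\mathcal C'_{s'}$.

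Finally, the two closing claims. If $C$ is smooth, then the generic fibre of $\mathcal C'$ is smooth, hence integral, so $B$ cannot be $A[[u,v]]/(uv)$ (whose generic fibre is non-integral); thus $c\neq0$. That $e_x=v(c)$ does not depend on $S'$ follows by taking a common étale refinement of two splitting covers: the comparison then takes place over a single complete discrete valuation ring, over which $v(c)$ is an invariant of $A[[u,v]]/(uv-c)$, and it is unchanged under the transition maps since these are unramified. The real obstacle in all of this is the normal-form step: one must verify carefully that each coordinate substitution preserves a regular system of parameters of $R$ and that the successive substitutions converge to a genuine $A$-automorphism of $R$; everything else is formal, granted the standard facts invoked above (existence of étale covers with prescribed residue field; normal $\Rightarrow$ domain; height-one primes in a regular local ring are principal; étale $\Rightarrow$ unramified).
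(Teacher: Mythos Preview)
The paper does not prove this proposition at all: it is quoted verbatim as \cite[Corollary 10.3.22]{liu02} and used as a black box, with no argument supplied. So there is nothing in the paper to compare your attempt against.

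That said, your outline is essentially the standard proof one finds in Liu's book and elsewhere, and the strategy is sound: reduce to a split node by an \'etale base change, present the completed local ring as a quotient of $A[[u,v]]$ by a principal height-one ideal, and then massage the generator into the form $uv-c$ by successive coordinate changes. The one place where you should be a bit more careful is the convergence of the Newton-type iteration: you want the corrections to the coordinates to lie in successively higher powers of $\mathfrak m_R$ (not just $\pi R$), so that the limit automorphism of $R$ is well-defined; as written, your error estimates are in powers of $\pi$, and you should check that this suffices for $\mathfrak m_R$-adic convergence of the coordinate sequence (it does, but it deserves a sentence). The argument for independence of $e_x$ via a common refinement is fine; an alternative, slightly cleaner, route is to observe that $e_x+1$ equals the length of the torsion in $\Omega^1_{B/A}$, which is manifestly intrinsic.
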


\begin{definition}
The value $e_x$ described in the above proposition  is called \emph{the thickness of the singularity} $x\in \mathcal C_s$.
\end{definition}


\begin{theorem}\label{EspSing}
Let $\cW\ra \Spec(\Z)$ be a Weierstrass semi-stable model, and let $p$
be an odd prime of bad reduction. Let $\tilde P\in \cW_p(\overline\F_p)$ be a
singular point lying in an affine open defined by an equation
$y^2+Q(x)y+P(x)=0$. Then, there exist exactly two Weierstrass points
$P_1,P_2\in\WP(X)$ that specialize to $\tilde P$. Moreover, the thickness
of $\tilde P$ is $e_{\tilde P}=2\nu(\gamma_1-\gamma_2)$, where $\nu$ is the
normalized valuation at $p$ and $\gamma_i$ are the roots of
$R(x)=Q(x)^2-4P(x)$ corresponding to $P_1$ and $P_2$.
\end{theorem}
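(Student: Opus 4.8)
The plan is to work over the completion of the Weierstrass model at the singular point and reduce everything to an explicit computation in a power series ring. First I would pass to an étale neighbourhood (or, equivalently, to the completion $\widehat{O_{\cW,\tilde P}}$) so that Proposition~(Liu, Corollary 10.3.22) applies: the completed local ring is $\Z_p^{ur}[[u,v]]/(uv-c)$ with $\nu(c)=e_{\tilde P}$, and I want to pin down $c$ in terms of the affine equation $y^2+Q(x)y+P(x)=0$. Since $p$ is odd we may complete the square and assume $Q=0$, so the affine equation becomes $y^2=-P(x)$, and after rescaling we may take the model to be $y^2=R(x)/4$ where $R=Q^2-4P$; the singular point $\tilde P=(\tilde\gamma,0)$ forces $\tilde\gamma$ to be a multiple root of $\bar R$ in $\overline\F_p$.

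The key step is the local analysis of $y^2 = R(x)$ near a double root of its reduction. Write $R(x) = c_0\prod_j(x-\gamma_j)$ over $\Z_p^{ur}$ (or a suitable finite extension), and suppose $\gamma_1,\gamma_2$ are the two roots reducing to $\tilde\gamma$ while all other $\gamma_j$ reduce to something different. Then near $\tilde P$ the unit $c_0\prod_{j\geq 3}(x-\gamma_j)$ has a square root $h(x)$ in the completed local ring (Hensel), and setting $\tilde y = y/h(x)$ and $w = x-\tilde\gamma$ we get $\tilde y^2 = (x-\gamma_1)(x-\gamma_2)$. Now I would change coordinates $u = \tilde y + (x-\frac{\gamma_1+\gamma_2}{2})$, $v = \tilde y - (x-\frac{\gamma_1+\gamma_2}{2})$, which gives $uv = \tilde y^2 - (x-\tfrac{\gamma_1+\gamma_2}{2})^2 = -\tfrac14(\gamma_1-\gamma_2)^2$. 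Comparing with the normal form $uv=c$ from the Proposition shows $\nu(c) = 2\nu(\gamma_1-\gamma_2)$, hence $e_{\tilde P} = 2\nu(\gamma_1-\gamma_2)$. This also shows exactly two branches, i.e. exactly the two Weierstrass points $P_1=(\gamma_1,0)$, $P_2=(\gamma_2,0)$, specialize to $\tilde P$: a third root $\gamma_3$ reducing to $\tilde\gamma$ would make $h(x)$ non-invertible and would contradict the fact that an ordinary double point has exactly two branches (a triple root would give a worse-than-nodal singularity, incompatible with semi-stability). One must also check $\tilde P$ is not the point(s) at infinity, which is immediate since those are either smooth or controlled by $\deg R$ and the leading coefficient $c$ in \eqref{Disc}.

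The main obstacle I anticipate is bookkeeping rather than conceptual: making sure the coordinate changes $\tilde y = y/h(x)$, $w=x-\tilde\gamma$ are genuine isomorphisms of the \emph{completed} local rings (so that the thickness, defined via that completion, is read off correctly), and that the ``unit'' factor $h(x)$ really is a unit in $\widehat{O_{\cW,\tilde P}}$ — this is where I use that only two of the $\gamma_j$ collide at $\tilde P$, i.e. that the singularity is a node and not a higher cusp, which is exactly the semi-stability hypothesis. A secondary subtlety is that a priori $\gamma_1,\gamma_2$ live in a ramified extension of $\Z_p$; one handles this by replacing $S=\Spec\Z_{(p)}$ by an appropriate étale-then-totally-ramified cover as in the Proposition, noting that the normalized valuation of the discriminant-type quantity $(\gamma_1-\gamma_2)^2$ rescales consistently with the normalization of $c$, so the final formula $e_{\tilde P}=2\nu(\gamma_1-\gamma_2)$ is independent of these choices.
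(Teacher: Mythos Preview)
Your approach is correct and essentially identical to the paper's: pass to the completed local ring, take a square root of the unit cofactor (the paper invokes a small lemma on $n$-th powers of $1+t$ for this), and perform the same change of variables $u=\tilde y+(x-\tfrac{\gamma_1+\gamma_2}{2})$, $v=\tilde y-(x-\tfrac{\gamma_1+\gamma_2}{2})$ to read off $e_{\tilde P}=\nu\big((\gamma_1-\gamma_2)^2\big)$. The one refinement in the paper that dissolves your final worry is that instead of fully factoring $R$, it Hensel-lifts only the quadratic factor, writing $R(x)=(x^2+ax+b)\,h(x)$ over $\widehat{O_{S',\pi'}}$ itself, so the discriminant $\Delta=a^2-4b=(\gamma_1-\gamma_2)^2$ already lies in the base ring and no ramified extension is ever needed.
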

To prove this result we need the following technical lemma.
\begin{lemma}\label{tec}
Let $A$ be a ring such that $n\in A^*$. Then $s=(1+t)^n-1\in A[t]$ satisfies $A[[t]]=A[[s]]$ and, moreover, there exists $ f(s)\in sA[[s]]$ such that $1+s=(1+f(s))^n$.
\end{lemma}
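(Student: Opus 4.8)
The plan is to reformulate the statement in terms of the continuous $A$-algebra endomorphism $\psi$ of $A[[t]]$ that sends the variable $t$ to $s=(1+t)^n-1$ (this is well defined because $s$ has zero constant term); the first assertion $A[[t]]=A[[s]]$ is then precisely the claim that $\psi$ is an isomorphism, $A[[s]]$ being by definition the image of $\psi$, i.e. the closed $A$-subalgebra of $A[[t]]$ topologically generated by $s$. The one observation that makes everything work is that
\[
s=(1+t)^n-1=t\Big(n+\tbinom{n}{2}t+\cdots+t^{n-1}\Big)=t\,u,
\]
where $u\in A[[t]]$ has constant term $n\in A^*$ and is therefore a unit of $A[[t]]$. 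In particular the ideals $(s)$ and $(t)$ of $A[[t]]$ coincide, so the $s$-adic and $t$-adic topologies agree; this is the place, and essentially the only place, where the hypothesis $n\in A^*$ enters.

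To see that $\psi$ is an isomorphism I would argue through the $(t)$-adic filtration. Since $\psi(t)=s\in(t)$, the map $\psi$ respects the filtration and induces, on the degree-$j$ graded piece $(t)^j/(t)^{j+1}\cong A$, the map $a t^j\mapsto a t^j u^j\equiv n^j a t^j$, i.e. multiplication by the unit $n^j$ of $A$; hence the associated graded map is an isomorphism in every degree. Because the $(t)$-adic filtration of $A[[t]]$ is separated and complete, $\psi$ is bijective (concretely: each induced endomorphism of $A[[t]]/(t)^k$ is ``upper triangular with unit diagonal'' relative to the basis $1,t,\dots,t^{k-1}$, hence an isomorphism, and one passes to the inverse limit). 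Therefore $A[[s]]=\psi(A[[t]])=A[[t]]$.

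For the second assertion, surjectivity of $\psi$ provides a power series $f$ with $\psi\big(f(t)\big)=t$, which unwound is exactly the identity $t=f(s)$ in $A[[t]]=A[[s]]$. Since $t\in(t)=(s)$, the element $t$, viewed as a series in $s$, has zero constant term, i.e. $f(s)\in sA[[s]]$. Substituting $t=f(s)$ into the defining relation $(1+t)^n=1+s$ then yields $(1+f(s))^n=1+s$, as required. (If one prefers to avoid inverting $\psi$ here, the same $f$ can be produced directly: expanding $\big(1+\sum_{k\ge1}c_ks^k\big)^n=1+s$ and comparing coefficients of $s^k$ gives $nc_k=\Phi_k(c_1,\dots,c_{k-1})$ for an explicit polynomial $\Phi_k$, which determines $c_k$ uniquely because $n\in A^*$.)

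I do not expect a genuine obstacle: this is a form of the formal inverse function theorem, valid over an arbitrary complete base as soon as the linear coefficient of the substituted series is a unit. The only points that need care are bookkeeping ones — keeping the $(t)$-adic filtration straight, and verifying at the outset that $(s)=(t)$ so that ``$A[[s]]$'' genuinely agrees with the closed subalgebra generated by $s$ and carries the same topology as $A[[t]]$.
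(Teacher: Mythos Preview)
Your proof is correct. The paper itself does not prove this lemma---it cites it as Exercise~1.3.9 of Liu's \emph{Algebraic Geometry and Arithmetic Curves} and leaves the proof to the reader---so you have supplied a complete argument where the paper gives none. Your approach via the formal inverse function theorem (checking that the associated graded of the substitution endomorphism $t\mapsto s$ is multiplication by the unit $n^j$ on each piece, hence an isomorphism on the separated complete filtration) is the natural one and is exactly what the exercise intends; the deduction of the second assertion by inverting $\psi$ and reading $t=f(s)$ back into $(1+t)^n=1+s$ is clean and correct.
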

\begin{proof}
This is exercise 1.3.9 of \cite{liu02}. The proof is left to the reader.
\end{proof}
\begin{proof}[Proof of Theorem \ref{EspSing}]
First we shall prove that there are exactly two Weierstrass
points $P_1,P_2\in\WP(X)$ specializing to $\tilde P$. Write $\overline
\cW_p=\cW\times\Spec(\overline\F_p)$ for the geometric fiber of $\cW$ at
$p$. Since $p\neq 2$, an affine open $\mathcal{U}$ of $\overline
\cW_p$ shall be of the form
$\mathcal{U}=\Spec(\overline\F_p[x,y]/(y^2-\tilde R(x)))$, where
$\tilde R(x)$ is the reduction of $R(x)$ modulo $p$. Hence it is
clear that singularities of $\mathcal{U}$ correspond to multiple
roots of $\tilde R(x)$. Without loss of generality, assume $x=0$ is the multiple root of $\tilde R(x)$ corresponding to
$\tilde P$. We get $\tilde R(x)=x^m \tilde h(x)$, where $\tilde h(x)=\tilde h(0)(1+x \tilde r (x))$ and  $\tilde h(0)\neq 0$.
The local ring $O_{\overline \cW_p,\tilde P}$ at $\tilde P$ is given by:
\[
O_{\overline \cW_p,\tilde P}=(\overline \F_p[x,y]/(y^2-x^m\tilde
h(x)))_{(x,y)},
\]
and it follows that
\[
\widehat{O_{\overline \cW_p,\tilde P}}=\overline
\F_p[[x,y]]/(\frac{y^2}{\tilde h(x)}-x^m).
\]
By Lemma \ref{tec}, taking $A=\overline\F_p[[y]]$, $t=x \,\tilde r (x)$ and
$n=2$, we obtain that $\tilde h(x)$ is a square  in
$(\overline \F_p[[x,y]])^*$. Hence $\widehat{O_{\overline
\cW_p,\tilde P}}=A_m$, where
\[
A_m:=\overline \F_p[[x,y]]/(y^2-x^m),\;\;m\geq 2.
\]

Since $\cW$ is semi-stable,  $\overline \cW_p/\overline
\F_p$ must be semi-stable. Therefore $\tilde P$ is an ordinary double point and
$\widehat{O_{\overline \cW_p,\tilde P}}\simeq \overline
\F_p[[x,y]]/(y^2-x^2)=A_2$. From
$A_2\simeq A_m$,  it follows that  $m=2$. As a consequence, $\tilde P$ is attached to a
root $\tilde \gamma$ of $\tilde R(x)$ with multiplicity 2 and we
conclude that there exist exactly two $P_1,P_2\in\WP(X)$ that specialize to $P$
(attached to the roots $\gamma_1$ and $\gamma_2$ of $R(x)$ that
reduce to $\tilde \gamma$).

Next, we proceed to compute the thickness $e_{\tilde P}$ of $\tilde P$:
the equation $Y^2=R(x)=Q(x)^2-4P(x)$ defines $\cW$ in a neighborhood of $(p)\in\Spec(\Z)$.
After extending to a finite extension $k'\supseteq \F_p$ if necessary, we can suppose that any singular point $\tilde P'\in \cW_p\times\Spec(k')$ lying over $\tilde P$ is $k'$-rational. Without loss of generality, assume that $\tilde P'$ is defined by $x=0, Y=0$. That is,
\[\tilde R(x)=x^2\tilde h(x),\quad \tilde h(0)\neq 0.\]
We can choose an étale scheme $S'$ over $\Spec(\Z)$ and a point $\pi\in S'$ above $(p)$ such that $k'=\F_p(\pi')$. Notice that, if we write $\cW'=\cW\times_S S'$, the point $\tilde P'$ lies in $(\cW')_{\pi'}$ and its local ring is $\cO_{\cW',(\pi',\tilde P')}=(\cO_{S',\pi'}[x,Y]/(Y^2-R(x)))_{(x,Y)}$.

Let $\widehat{\cO_{S',\pi'}}$ be the completion of $\cO_{S',\pi'}$ and denote by $\nu$ its normalized valuation. Let us consider $R(x)$ over $\widehat{\cO_{S',\pi'}}$. Since its reduction is $\tilde R(x)=x^2\tilde h(x)$ with $\tilde h(0)\neq 0$, we apply the Classical Hensel's Lemma (cf.\cite{Riben}) to $x^2$ and $\tilde h(x)$ and we obtain that $R(x)=(x^2+ax+b)\cdot h(x)$, where $\nu(h(0))=0$, $\nu(a)>0$ and $\nu(b)>0$.
Extending $S'$ to a bigger étale $\Spec(\Z)$-scheme if necessary, we can suppose that $h(0)$ has a square root in $\widehat{\cO_{S',\pi'}}$. Since $h(x)=h(0)(1+x\cdot r(x))\in \widehat{\cO_{S',\pi'}}[[x]]^*$, by Lemma \ref{tec}, there exists $s(x)\in\widehat{\cO_{S',\pi'}}[[x]]^*$ such that $s(x)^2=h(x)$. Therefore
\[
\widehat{\cO_{\cW',(\pi',\tilde P')}}=\widehat{\cO_{S',\pi'}}[[x,Y]]/(Y^2-(x^2+ax+b)\cdot h(x))=\widehat{\cO_{S',\pi'}}[[x,Y]]/(\left(\frac{Y}{s(x)}\right)^2-\left(x+\frac{a}{2}\right)^2-\Delta),
\]
where $\Delta=a^2-4\cdot b$. Writing $u=Y/s(x)+x+a/2$ and $v=Y/s(x)-x-a/2$, we obtain that $\widehat{\cO_{S',\pi'}}[[x,Y]]=\widehat{\cO_{S',\pi'}}[[u,v]]$ and
\[
\widehat{\cO_{\cW',(\pi',\tilde P')}}=\widehat{\cO_{S',\pi'}}[[u,v]]/(u\cdot v-\Delta).
\]
Hence, we deduce that $e_{\tilde P}=\nu(\Delta)$.

Since the roots of the polynomial $x^2+ax+b$ are precisely the two
unique roots $\gamma_1,\gamma_2\in\overline \Q$ that reduce to
$\tilde \gamma$, and $\Delta$ is the discriminant of the polynomial
$x^2+ax+b$, it follows that $\Delta=(\gamma_1-\gamma_2)^2$ and
$e_{\tilde P}=2\nu(\gamma_1-\gamma_2)$.
\end{proof}

\section{Hyperelliptic Shimura curves}\label{hypShi}

Let $B$ be an indefinite division quaternion algebra over $\Q$ and let $\cO$ be a maximal order in $B$. By an abelian surface with quaternionic multiplication (QM) by $\cO$
over a field $K$ we mean a pair $(A, i)$ where:
\begin{itemize}
\item [i)] $A/K$ is an abelian surface.
\item [ii)]  $i:\cO\hookrightarrow\End(A)$ is an embedding.
\end{itemize}

For such a pair we denote by $\End (A,i)$ the ring of endomorphisms which commute with
$i$, i.e., $\End(A,i)=\{\phi\in\End(A):\phi i(\alpha)=i(\alpha)\phi\mbox{  for all }\alpha\in\cO\}$.
Two abelian surfaces $(A,i)$ and  $(A',i')$ with QM by $\cO$ are isomorphic if there is an isomorphism $\phi:A\rightarrow A'$
such that $\phi\circ i(\alpha)=i'(\alpha)\circ\phi$ for all $\alpha\in\cO$.
Throughout, we shall denote by $[A,i]$ the isomorphism class of $(A,i)$.

Let us denote by $X_0^D/\Q $ Shimura's canonical model of the Shimura curve associated to $\cO$. As Riemann surfaces,  $X_0^D(\C) = \Gamma_0^D\backslash \mathcal H$, where $\mathcal H$ is the Poincaré upper half plane and $\Gamma_0^D$ is the image of $\cO$ through the embedding $B\hookrightarrow B\otimes\R\simeq M(2,\R)$. As is well known, $X_0^D$ represents, as a coarse moduli space, the moduli problem of classifying abelian surfaces with quaternionic multiplication by $\cO$.
Hence an isomorphism class $P=[A,i]$ shall be often regarded as a point on $X_0^D$.

It follows from the work of Morita, Cerednik and Drinfeld that $X_0^D$ admits a proper integral model $\cX$ over $\Z$, smooth over $\Z[\frac{1}{D}]$, which suitably extends the moduli interpretation to arbitrary base schemes (cf.\cite{Mo},\cite{BC}).
Moreover, $\cX$ is semi-stable at every prime $p$ dividing $D$, and singular points of $\cX_p$ are in correspondence with certain algebraic objects (see correspondence \eqref{corrsing}),
from which we will recover their thicknesses (see Lemma \ref{espord}).

Let $K$ be an imaginary quadratic field and let $R$ be an order in $K$.
A point $P=[A,i]\in X_0^D(\C)$ is a Heegner (or CM) point by $R$ if $\End(A,i)\simeq R$. Throughout, we shall fix the isomorphism $R\simeq\End(A,i)$ to be the canonical one described in \cite[Definition 1.3.1]{Jor}. We denote by $ \CM(R)$ the set of Heegner points by $R$. By main Theorem I of \cite{Shim}, the extension $K(P)$ of $K$ generated by the coordinates any $P\in\CM(R)\subset X_0^D$ is the ring class field of $R$, $H_R$. Moreover, $[K(P):\Q (P)]$ is $1$ or $2$ and the number field $\Q
(P)$ can be determined, up to Galois conjugation (see Theorem 5.12 of \cite{GoRo1}).

For every divisor $m|D$ let us denote by $\omega_m$ the corresponding Atkin-Lehner involution on $X_0^D$, which is  defined over $\Q$. The property $\omega_m\cdot \omega_n=\omega_{m\cdot n/(m,n)^2}$ implies that the set $W(D)=\{\omega_d:d|D\}$ is a subgroup of automorphisms of $X_0^D$ isomorphic to $(\Z/2\Z)^{\#\{p\mid D\}}$. The action of these involutions on Heegner points can be found in Lemmas 5.9 and 5.10 of \cite{GoRo1} and, as the following result shows, their set of fixed points is also a set of Heegner points.


\begin{proposition}\cite[\S 1]{Ogg}\label{WP}
Let $m \mid D,\; m > 0$. The set $\mathfrak{F}_{\omega_m}$ of fixed points of
the Atkin-Lehner involution $\omega_m$ acting on $X_0^D$ is
\[
\mathfrak{F}_{\omega_m}=
\left\{\begin{array}{ll}
\CM(\Z[\sqrt{-1}]) \sqcup \CM(\Z[\sqrt{-2}]) & \rm{if}\; m = 2\\
\CM(\Z[\sqrt{-m}]) \sqcup \CM(\Z[\frac{1+\sqrt{-m}}{2}]) & \rm{if}\; m \equiv 3\mod 4\\
\CM(\Z[\sqrt{-m}]) & \rm{otherwise.}
\end{array}\right.
\]
\end{proposition}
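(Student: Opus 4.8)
The statement is classical (Ogg) and I would prove it by reducing the computation of fixed points of $\omega_m$ to a question about optimal embeddings of imaginary quadratic orders into the quaternion order $\cO$, normalised by the element implementing $\omega_m$. Concretely, recall that $\omega_m$ is induced by conjugation by an element $\mu\in\cO$ with $\mu^2 = -m$ (more precisely, $\mu$ normalises $\cO$, has reduced norm $m$, and generates the local Atkin-Lehner element at each $p\mid m$). A point $P = [A,i]\in X_0^D$ is fixed by $\omega_m$ precisely when $(A,i)\cong(A,i\circ \mathrm{conj}_\mu)$, which by the moduli interpretation translates into the existence of an isomorphism $\phi\colon A\to A$ with $\phi\, i(\alpha) = i(\mu\alpha\mu^{-1})\,\phi$ for all $\alpha\in\cO$; equivalently, $\psi := i(\mu^{-1})\phi$ lies in $\End(A,i)$ and satisfies $\psi\, i(\alpha) = i(\alpha)\,\psi$ while $\psi^2 = i(\mu^{-1})\phi\, i(\mu^{-1})\phi$ is, up to sign, $i(\mu^{-2}) = i(m)^{-1}\cdot(\text{unit})$. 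Thus $P$ is a CM point whose CM order $R = \End(A,i)$ contains an element whose square is a rational multiple of $-m$; the precise normalisation forces $R\otimes\Q = \Q(\sqrt{-m})$ and $R$ to be one of the two orders $\Z[\sqrt{-m}]$ or (when $-m\equiv 1\bmod 4$) $\Z[\tfrac{1+\sqrt{-m}}{2}]$.

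First I would make the element $\mu$ explicit and record that $\mu$ generates, together with $\Q$, the quadratic field $K_0 = \Q(\mu)\subset B$ with $K_0\cong\Q(\sqrt{-m})$, and that $\mu$ normalises $\cO$; this is standard from the theory of Atkin-Lehner involutions on Eichler orders. Second, I would show that a fixed point $P=[A,i]$ must be a Heegner point: the argument above produces an endomorphism $\psi\in\End(A,i)$ with $\psi\notin\Z$ (since conjugation by $\mu$ is a nontrivial automorphism of $\cO$ as $m>1$), hence $\End(A,i)$ is strictly bigger than $\Z$ and is an order $R$ in an imaginary quadratic field $K$. Third, I would pin down $K$ and $R$: analysing $\psi^2$ and using that the canonical isomorphism $R\cong\End(A,i)$ of \cite[Def.~1.3.1]{Jor} is compatible with the embedding $i$ and with Rosati/complex conjugation, one gets $\psi^2\in\{-m,-4m,\dots\}\cap R$ with the constraint that $K = \Q(\sqrt{-m})$; writing $d = \disc(\Q(\sqrt{-m}))$ one concludes $R = \Z[\sqrt{-m}]$ if $-m\not\equiv 1\bmod 4$ (i.e. $m\equiv 1,2\bmod 4$), and $R\in\{\Z[\sqrt{-m}],\ \Z[\tfrac{1+\sqrt{-m}}{2}]\}$ if $m\equiv 3\bmod 4$. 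The special case $m=2$ is genuinely different: here $\mu^2 = -2$ forces $K=\Q(\sqrt{-2})$, $R = \Z[\sqrt{-2}]$ at first glance, but one must also account for the \emph{other} Atkin-Lehner fixed locus coming from the fact that the full Atkin-Lehner group element at $2$ can be represented by elements of norm $2$ lying in $\Q(\sqrt{-1})$ as well (since $(1+i)^2 = 2i$ gives an element of norm $2$ up to units in $\Z[\sqrt{-1}]$), which contributes $\CM(\Z[\sqrt{-1}])$; this is why the $m=2$ line of the statement has two summands with different fields.

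Fourth, for the converse (and to know the fixed locus is exactly, not just contained in, the asserted set) I would invoke the theory of optimal embeddings: for each candidate order $R$ there is an optimal embedding $R\hookrightarrow\cO$, and by the local-global Eichler theory such embeddings exist (the relevant local conditions at $p\mid D$ are satisfied precisely because $D$ is the discriminant and $\left(\tfrac{-m}{p}\right)\ne 1$ for the orders appearing — this uses the standard ramification compatibility); one then checks that any Heegner point with CM by such $R$ is fixed by $\omega_m$, since the image of $\mu$ (or of the norm-$2$, resp. norm-$m$ element of $R$) under the embedding differs from the Atkin-Lehner element by a unit of $\cO$, hence induces the same involution on the moduli point. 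I expect the main obstacle to be the bookkeeping in Step 3: getting the normalisation of $\psi^2$ exactly right — distinguishing $R = \Z[\sqrt{-m}]$ from $R = \Z[\tfrac{1+\sqrt{-m}}{2}]$, and correctly isolating the extra $\CM(\Z[\sqrt{-1}])$ component when $m=2$ — requires care with the canonical identification $R\cong\End(A,i)$ and with which conjugacy class of norm-$m$ elements of $\cO$ actually implements $\omega_m$. Since all of this is worked out in \cite{Ogg} (and the action of $\omega_m$ on Heegner points is recorded in \cite[Lemmas 5.9, 5.10]{GoRo1}), for the purposes of this paper I would simply cite Ogg; the sketch above is how one reconstructs the proof.
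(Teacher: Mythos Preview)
The paper gives no proof of this proposition at all: it is stated with a citation to Ogg \cite[\S 1]{Ogg} and used as a black box. Your proposal is therefore not being compared to a proof in the paper but rather supplying one where the paper deliberately does not.

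That said, your reconstruction is the standard one and is essentially correct. The key point---that a fixed point of $\omega_m$ is a Heegner point whose CM order $R$ contains an element of reduced norm $m$ (coming from the normalising element $\mu$), hence $R\otimes\Q\cong\Q(\sqrt{-m})$ and $R$ is one of the at most two orders containing $\Z[\sqrt{-m}]$---is right, and the converse via existence of optimal embeddings is the right mechanism. One small wobble: your treatment of $m=2$ is a bit roundabout. The cleanest way to see why $\CM(\Z[i])$ appears is simply that $\Z[i]$ contains an element of norm $2$ (namely $1+i$), just as $\Z[\sqrt{-2}]$ does (namely $\sqrt{-2}$); there is no need to invoke ``another representative of the Atkin--Lehner element''. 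Likewise, the ``otherwise'' case is clean once you observe that when $m\equiv 1,2\pmod 4$ (and $m\neq 2$) the only order in $\Q(\sqrt{-m})$ containing an element of norm $m$ is $\Z[\sqrt{-m}]$ itself. Since you yourself conclude by citing Ogg, and that is exactly what the paper does, there is nothing to correct.
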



Ogg determined in \cite{Ogg} the $24$ values of $D$ for which $X_0^D$ is hyperelliptic over $\overline{\Q}$ and proved that only for $21$ values  of them the corresponding curves $X_0^D$ are hyperelliptic over $\Q$. The aim of this paper is to give a procedure to compute equations for all these cases. Since those of genus $2$ were computed by J. Gonz\'alez and V. Rotger in \cite{GoRo2}, we assume that $X_0^D/\Q$ is hyperelliptic over $\Q$ of genus $g>2$.  We present the values of $D$ and the corresponding genera for the  remaining  $18$ cases:
$$
\begin{array}{c|l}
g & D\\ \hline
3 & 2\cdot 31,2\cdot 47,3 \cdot 13,3\cdot 17,   3\cdot 23 , 5\cdot 7 ,  5\cdot 11 \\[2pt]
4&  2\cdot 37,
 2\cdot 43 \\[2pt]
5& 3\cdot 29 \\[2pt]
6& 2\cdot 67 \\[2pt]
7&  2\cdot 73 ,3\cdot 37,5\cdot 19 \\[2pt]
9& 2\cdot 97, 2\cdot 103, 3\cdot 53, 7\cdot 17
 \end{array}
$$
\begin{centerline}{\bf Table 1}\end{centerline}

\vskip 0.1 cm

The hyperelliptic involution $\omega$ of $X_0^D$ in all these cases
turns out to be the Atkin-Lehner involution $\omega_D$. Since the action of
$\omega_D$ has an interpretation in terms of the moduli problem, it
can be extended to an involution on the integral model $\cX$.
Moreover, we have an explicit description of the fibers $\cX_p$ and
the action of $\omega=\omega_D$ on them. Hence we can easily check
whether the quotient $\cX/\langle\omega\rangle$ is smooth over $\Z$.
If $\cX/\langle\omega\rangle$ is not smooth over $\Z$, then $\cX$ is not a Weierstrass model for $X_0^D$. Sometimes it is possible to
blow-down certain exceptional irreducible components in order to obtain a model
$\cW$ such that $\cW/\langle\omega\rangle$ is smooth over $\Z$ and,
thus,  defined
by an equation of the form \eqref{eqZ}:
\[
\cW:y^2+Q(x)y+P(x)=0, \quad P,Q\in\Z[x],\quad 2g+1\leq\max\{2\deg(Q),\deg(P)\}\leq 2g+2.
\]

\begin{remark}\label{NoWeierstrass}
But this is not always possible.
For example, the special fiber of Morita's integral model of $X_0^{87}$ at $p=29$ has the following form:

\begin{picture}(250,70)
\thicklines

\qbezier(260,20)(285,-10)(310,40)

\qbezier(260,40)(285,70)(310,20)

\qbezier(200,40)(230,-20)(260,40)

\qbezier(200,20)(230,70)(260,20)

\qbezier(180,60)(190,55)(200,40)

\qbezier(180,0)(190,5)(200,20)

\qbezier(120,60)(160,-20)(200,60)

\qbezier(120,0)(160,70)(200,0)

\qbezier(90,40)(110,10)(140,10)

\qbezier(90,20)(110,45)(140,50)

\put(98,28){\circle*{6}}

\put(127,46){\circle*{6}}

\put(127,11){\circle*{6}}

\put(142,28){\circle*{6}}

\put(193,48){\circle*{6}}

\put(193,10){\circle*{6}}

\put(179,28){\circle*{6}}

\put(206,28){\circle*{6}}

\put(255,28){\circle*{6}}

\put(305,28){\circle*{6}}

\put(126,52){1}

\put(126,16){1}

\put(192,53){1}

\put(192,17){1}

\put(142,38){1}

\put(175,38){1}

\put(206,38){1}

\put(255,38){1}

\put(306,38){1}

\put(97,38){3}

\end{picture}

\quad

Clearly, by blowing-down exceptional divisors it is not possible to obtain a fiber $\cW_p$ such that $\cW_p/\langle\omega\rangle$ is smooth over $\F_p$.
\end{remark}

In order to obtain explicit equations, we will focus our attention in two directions:

\vskip 0.1 cm {\it 1. Determination of the thicknesses of
Weierstrass points at every prime $p|D$.} Since the hyperelliptic
involution is the Atkin-Lehner involution $\omega_D$, we have that
$\WP(\cW)=\bigsqcup_i \CM(R_i)$, where $\{R_i\}$ is the set of the
orders in the imaginary quadratic field $K=\Q(\sqrt{-D})$ containing
the order $\Z(\sqrt{-D})$. By Theorem \ref{EspSing}, thicknesses
of singular specializations of $\WP(\cW)$ are related with roots
of the polynomial $R(x)=P(x)^2-4Q(x)$. In \S \ref{singespCM} we
shall discuss singular specialization of Heegner points and we shall
give an explicit recipe to obtain such thicknesses.

\vskip 0.1 cm
{\it 2. Determination of the leading coefficient of $R(x)=P(x)^2-4Q(x)$.}

Given the Weierstrass model $\cW$ of $X_0^D$, let $\mathcal U$ be the
affine open defined by the equation $y^2+P(x)y+Q(x)=0$. The set of
\emph{points at infinity} of $\mathcal U$ is the set of geometric
points of the generic fiber of $\cW\setminus\mathcal U$.
 Since Shimura curves do not have real points (cf. \cite[Proposition 4.4]{Shimura}), this set corresponds to a pair of conjugate points living in a quadratic extension of $\Q$ such that the hyperelliptic involution acts on them via the unique non-trivial Galois conjugation. In particular, this implies that $\deg(P^2-4Q)=2g+2$.
In order to fix a hyperelliptic equation of $\cW$, we must
choose a pair of points defined over an imaginary quadratic field
such that the hyperelliptic involution acts suitably on them.

It turns out that for every value $D$ in Table 1, there exists a maximal
order $R_\infty$ in an imaginary quadratic field $K_\infty$ with number class
$h_{R_\infty}=1$, i.e. $K_\infty=H_{R_\infty}$, discriminant coprime to $D$ and such that $\CM(R_\infty)\neq \emptyset$.  By
\cite[Lemma 5.10]{GoRo1}, complex conjugation
acts on every $P_\infty\in\CM(R_\infty)$ as the hyperelliptic involution
$\omega_D$. We fix $P_\infty\in\CM(R_{\infty})$ and we choose the
set $\{P_\infty,\omega_D(P_\infty)\}$ to be our set
of points at infinity. This choice  shall fix a hyperelliptic
equation $y^2+P(x)y+Q(x)=0$ of $\cW$, up to transformations of the
form $(x,y)\mapsto (x+a,y+h(x)),\;a\in\Z,\;h(x)\in\Z[x]$,
$\deg(h(x))\leq g+1$.

Our goal  is to determine the leading coefficient $a_R$ of the
polynomial $R(x)=P(x)^2-4Q(x)$. As a first approach, recall that the
field of definition of $P_\infty$ is $K_\infty=\Q(\sqrt{a_R})$.
Moreover, a prime $p$ divides $a_R$ if and only if $P_\infty$ and
$\omega_D (P_\infty)$ specialize to the same $\F_p$-rational Weierstrass point. Hence, the determination of the
specialization of these specific Heegner points will give a valuable
information about  the leading coefficient $a_R\in\Z$.

Since any $p\mid D$ is inert in $R_\infty$, $P_\infty$ has good reduction at $p$. Any Weierstrass point has singular specialization at any prime dividing $D$, hence $(a_R,D)=1$.
In order to determine the  remaining $p$-adic valuations of $a_{R}$, we introduce the following definition:
\begin{definition}
Let $R$ be a local valuation ring with uniformizer $\pi$.
\emph{The intersection index} of two ideals $I_1$ and $I_2$ of an
algebra $A$ over $R$ is the length of the algebra $A/(I_1 + I_2)$.
\end{definition}

Let $P_1$ and $P_2$ be the points in $\Spec(A)$ defined by $I_1$ and
$I_2$. By \cite[Lemma 3.13]{Sad}, the intersection index of $I_1$ and $I_2$
measures the maximal power $n$ of $\pi$ in which their inverse image
$\tilde P_1$ and $\tilde P_2$ coincide in $\Spec(A\otimes_R
(R/\pi^nR))$.

Recall that $P_\infty$ lies in the affine open defined by the relation $z^2+Q_1(v)z+P_1(v)=0$, where $Q_1(v)=v^{g+1}Q(1/v)$ and $P_1(v)=v^{2g+2}P(1/v)$. Moreover, the ideals defining $P_\infty$ and $\omega_D(P_\infty)$ are
\[
I_{P_\infty}=\langle v, z+\frac{Q_1(0)+\sqrt{a_R}}{2}\rangle,\quad
I_{\omega_D (P_\infty)}=\langle v,
z+\frac{Q_1(0)-\sqrt{a_R}}{2}\rangle.
\]
Set $K_p=K_\infty\otimes_\Q\Q_p$,  let $K_p^{\rm{unr}}$ be the
maximal unramified extension of $K_p$ and let $R_p^{\rm{unr}}$ be
its integer ring with uniformizer $\pi$. Write $\cW_p^{\rm{unr}}$
for the extension of scalars $\cW\times\Spec(R_p^{\rm{unr}})$ and
denote also by $P_\infty$ and $\omega_D (P_\infty)$ their inverse
image in $\cW_p^{\rm{unr}}$. Write $(P_\infty,\omega_D(P_\infty))_p$ for the intersection index between $P_\infty$ and $\omega_D(P_\infty)$ in $\cW_p^{\rm{unr}}$. Then, it is easy to check that $(P_\infty,\omega_D(P_\infty))_p$ is precisely $\nu_p(a_R)$, if $p$ ramifies or splits in $K_\infty$, and $\nu_p(a_R)/2$, if $p$ is inert in $K_\infty$.

Assume that $p\nmid D$. Since $\cX/\Z$ is the coarse moduli space
associated to the algebraic stack that classifies abelian surfaces
with QM by $\cO$ over any arbitrary base scheme (cf. \cite{BC}) and
$\cW_p^{\rm{unr}}=\cX_p^{\rm{unr}}$, this intersection index can be
interpreted in terms of the algebraic objects classified by
$P_\infty=[A_\infty,i_\infty]$ and $\omega_D
(P_\infty)=[A_\infty',i_\infty']$. Namely,
\begin{equation}\label{contact}
(P_\infty,\omega_D (P_\infty))_p:=\max\{n\geq 1:\;(A_\infty,i_\infty)\simeq (A_\infty',i_\infty')\;\; \rm{over}\;\;R_p^{\rm{unr}}/\pi^n R_p^{\rm{unr}} \}.
\end{equation}

In section \S \ref{Esp_h_R1} we describe the specialization of those Heegner points $P\in\CM(R)$ with class number $h_R=1$ and we provide a description of $(P,\omega_D (P))_p$ in purely algebraic and computable terms.

\section{Specialization of Heegner points}\label{singespCM}

For any two square-free positive integers $d$ and $n$ let $\Pic(d,n)$ stand for the set of isomorphism classes of oriented Eichler orders of level $n$ in a quaternion algebra of discriminant $d$ (see \cite[\S 2.1]{Mol} for the definition of oriented Eichler order).

Let $\cX/\Z$ be Morita's integral model of $X_0^D$ as above. Let $p\mid D$ be a prime of bad reduction of $\cX$.
Thanks to the work of Cerednik and Drinfeld (cf. \cite{Cer},\cite{Drin}), we know that the special fiber $\cX_p$ at $p$ is semi-stable. Moreover, its sets of singular points $(\cX_p)_{\rm{sing}}$ and irreducible components $(\cX_p)_{c}$ are in one-to-one correspondence with the sets $\Pic(\frac{D}{p},p)$ and two copies of $\Pic(\frac{D}{p},1)$, respectively. We shall denote by
\begin{equation}\label{corrsing}
\varepsilon_s:(\cX_p)_{\rm{sing}}\stackrel{1:1}{\longleftrightarrow}\Pic(D/p,p)
\end{equation}
and
\begin{equation}\label{corrcc}
\varepsilon_c:(\cX_p)_{c}\stackrel{1:1}{\longleftrightarrow}\Pic(D/p,1)\sqcup\Pic(D/p,1)
\end{equation}
the corresponding bijections.

For any $\tilde P=[\tilde A,\tilde i]\in(\cX_p)_{\rm{sing}}$, the endomorphism ring $\End(\tilde A,\tilde i)$ is an Eichler order of level $p$ in a definite quaternion algebra of discriminant $D/p$, equipped with natural orientations \cite[Proposition 2.1]{Rib}, hence its isomorphism class can be regarded as an element of $\Pic(\frac{D}{p},p)$. Moreover, one can see in \cite[\S 5]{Mol} that $\varepsilon_s(\tilde P)=\End(\tilde A,\tilde i)$.

\begin{lemma}\cite[\S 3]{Edix}\label{espord}
The thickness $e_{\tilde P}$ of any $\tilde P\in(\cX_p)_{\rm{sing}}$ is given by $e_{\tilde P}=\epsilon(\varepsilon_s(\tilde P))$, where $\epsilon:\Pic(D/p,p)\ra \Z$ stands for the natural map
\begin{equation}\label{epsilon}
\epsilon(\cO_i)=\#(\cO_i^*/\langle\pm 1\rangle ),\quad\mbox{for all }\cO_i\in\Pic(D/p,p).
\end{equation}
\end{lemma}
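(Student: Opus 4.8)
The plan is to compute the thickness $e_{\tilde P}$ directly from the local structure of $\cX$ at $\tilde P$, which the Cerednik–Drinfeld uniformisation describes completely. Since $p\mid D$, the rigid-analytic uniformisation identifies $\cX\times\Spec(\Z_p^{\rm{unr}})$ with the quotient of Mumford's formal scheme $\widehat{\Omega}\times\Spec(\Z_p^{\rm{unr}})$ (together with its $p^{\Z}$-translates) by the action of an arithmetic subgroup $\Gamma\subset\PGL_2(\Q_p)$ coming from the units of a definite quaternion order. The special fibre of $\widehat{\Omega}$ is the Bruhat–Tits tree $\mathcal{T}$ of $\PGL_2(\Q_p)$: its vertices give the components and its edges give the singular points, each singular point being an ordinary double point of thickness $1$ on the Drinfeld side. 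Passing to the quotient, a singular point $\tilde P$ of $\cX_p$ corresponds to a $\Gamma$-orbit of an edge $e$ of $\mathcal{T}$, and the local ring at $\tilde P$ is the ring of invariants of $\widehat{O}_{\widehat{\Omega},\tilde e}$ under the stabiliser $\Gamma_e$ of that edge. The key point is that $\Gamma_e$ is a finite cyclic group whose order is exactly $\epsilon(\varepsilon_s(\tilde P))=\#(\cO^*/\langle\pm1\rangle)$ for the Eichler order $\cO=\End(\tilde A,\tilde i)$ attached to $\tilde P$ — this is precisely Ribet's identification of edge stabilisers with unit groups of oriented Eichler orders of level $p$.

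Concretely, I would argue as follows. First, recall from the Drinfeld description that near a lift $\tilde e$ of $\tilde P$ one has $\widehat{O}_{\widehat{\Omega}\times\Spec(\Z_p^{\rm{unr}}),\tilde e}\cong \Z_p^{\rm{unr}}[[u,v]]/(uv-p)$, i.e. thickness $1$. Second, identify $\Gamma_e$ with the cyclic group $\cO^*/\langle\pm1\rangle$ via \cite[Proposition 2.1]{Rib}; let $n=\epsilon(\varepsilon_s(\tilde P))$ be its order. A generator acts on the branches of the node by swapping them or not; since $\tilde P$ is genuinely a node of $\cX_p$ (both branches are present and the quotient must again be semi-stable), the generator fixes each branch and acts on the local parameters $u,v$ by $u\mapsto\zeta u$, $v\mapsto\zeta^{-1}v$ for a primitive $n$-th root of unity $\zeta\in\Z_p^{\rm{unr}}$ (which exists as $p\nmid n$ — here $n\mid p\mp1$ or $n\in\{1,2,3\}$). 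Third, take invariants: $\left(\Z_p^{\rm{unr}}[[u,v]]/(uv-p)\right)^{\langle\zeta\rangle}=\Z_p^{\rm{unr}}[[u^n,v^n]]/(u^nv^n-p^n)=\Z_p^{\rm{unr}}[[U,V]]/(UV-p^n)$, where $U=u^n$, $V=v^n$. Hence $\widehat{O}_{\cX\times\Spec(\Z_p^{\rm{unr}}),\tilde P}\cong\Z_p^{\rm{unr}}[[U,V]]/(UV-p^n)$, and comparing with the normal form in the Proposition of \S\ref{SemiSthyp} gives $e_{\tilde P}=\nu(p^n)=n=\epsilon(\varepsilon_s(\tilde P))$, as claimed. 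The thickness is independent of the chosen étale extension, so working over $\Z_p^{\rm{unr}}$ is harmless.

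The main obstacle is the second step: justifying that the stabiliser $\Gamma_e$ is cyclic of order exactly $\epsilon(\varepsilon_s(\tilde P))$ and that it acts on the formal neighbourhood of the node without interchanging the two branches. The cyclicity and the order come from the dictionary between the Cerednik–Drinfeld group $\Gamma$ and oriented Eichler orders — a vertex stabiliser is $\cO_{\rm max}^*/\langle\pm1\rangle$ for a maximal order and an edge stabiliser is $\cO^*/\langle\pm1\rangle$ for the Eichler order of level $p$ sitting inside the two adjacent maximal orders — so this is really a citation to \cite{Rib} (and implicitly \cite{Edix}, whose \S3 this lemma quotes). The non-interchanging of branches is forced: if an element of $\Gamma_e$ swapped the two ends of the edge it would have a fixed point in the interior of that edge, contradicting that $\Gamma$ acts on $\mathcal{T}$ without inversions (equivalently, it would make the quotient non-semi-stable at $\tilde P$, contradicting the Cerednik–Drinfeld theorem). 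Once these structural facts are in hand, the ring-of-invariants computation is the routine calculation sketched above, and the result follows.
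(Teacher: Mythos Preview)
The paper does not prove this lemma at all: it is stated with a bare citation to \cite[\S 3]{Edix} and used as a black box. Your proposal is therefore not competing against any argument in the paper, but is essentially a reconstruction of the standard proof (which is indeed what one finds in Edixhoven): Cerednik--Drinfeld uniformisation gives thickness $1$ on the Drinfeld side, the stabiliser of an edge is $\cO^*/\langle\pm1\rangle$ for the corresponding oriented Eichler order of level $p$, and taking invariants under the tame cyclic action $u\mapsto\zeta u$, $v\mapsto\zeta^{-1}v$ multiplies the thickness by the order of that group.

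Your sketch is correct in outline. Two points deserve a bit more care. First, the non-inversion claim: the cleanest justification is not ``otherwise the quotient would fail to be semi-stable'' (an inversion would produce a smooth point, which is perfectly semi-stable) but rather the bipartite structure of the Bruhat--Tits tree --- elements of $\Gamma$ with $\nu_p(\det)=0$ preserve the two types of vertices and hence cannot swap the ends of an edge, and it is exactly this norm-$0$ subgroup that fixes a given sheet of $\widehat\Omega\,\widehat\otimes\,\Z_p^{\rm unr}$. Second, the cyclicity and tameness of $\Gamma_e$: you assert $n\in\{1,2,3\}$, which is right (finite subgroups of a definite quaternion order modulo $\pm1$ fixing an edge are cyclic of order at most $3$), and this guarantees $p\nmid n$ as soon as $p\geq5$; for $p\in\{2,3\}$ one checks directly that the relevant level-$p$ Eichler orders have $\cO^*=\{\pm1\}$, so $n=1$ and there is nothing to do. With these clarifications your argument goes through.
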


We proceed to introduce the concept of optimal embedding. It shall be useful for future computations since Heegner points are in correspondence with certain optimal embeddings. Throughout, for any $\Z $-algebra $\mathcal D$, write $\mathcal D^0=\mathcal D\otimes _{\Z} \Q $.
\begin{definition}
Let $\cO_{d,n}$ be an oriented Eichler order in $\Pic(d,n)$ and let $R$ be an order in an imaginary quadratic field $K$. An \emph{optimal embedding} with
respect to $R$ is a ring monomorphism  $\varphi :K\hookrightarrow \cO_{d,n}^0$ such that $\varphi(K)\cap\cO_{d,n}=\varphi(R)$.
For any oriented Eichler order $\cO_{d,n}$, let $\CM_{\cO_{d,n}}(R)$ denote the set of optimal embeddings $\varphi: R\hookrightarrow \cO_{d,n}$, up to conjugation by $\cO_{d,n}^*$. Let $\CM_{d,n}(R) = \sqcup_{\cO_{d,n} \in \Pic(d, n)} \CM_{\cO_{d,n}}(R)$, where
$\cO_{d,n}\in\Pic(d,n)$ runs over a set of representatives of oriented Eichler orders.
\end{definition}
It is well known (see \cite[\S 2.2]{Mol}) that there is a one-to-one correspondence between the set $\CM(R)$ and the set of optimal embeddings $\CM_{D,1}(R)$. We denote this correspondence by:
\begin{equation}\label{Phi}
\begin{array}{ccc}
\varphi:\CM(R) & \longrightarrow & \CM_{D,1}(R)\\
P & \longmapsto & \varphi(P).
\end{array}
\end{equation}

Let $\Pic(R)$ be the Picard group of $R$, i.e. the group of isomorphism
classes of projective $R$-modules of rank 1. Let
$\Phi_R:\Pic(R)\ra\Gal(H_R/K)$ be the group isomorphism given by
Artin's reciprocity map. Recall that all $P\in\CM(R)$ are defined over $H_R$.

As is well known (cf. \cite[\S 5]{Vig}), there is a faithful action
of $\Pic(R)$ on $\CM_{d,n}(R)$. For any $[J]\in\Pic(R)$ and
$\psi\in\CM_{d,n}(R)$, denote such action by $[J]\ast\psi$.
The following
theorem, known as the \emph{Shimura reciprocity law},  describes the Galois
action of $\Gal(H_R/K)$ in terms of the action of $\Pic(R)$ on
$\CM_{D,1}(R)$, via the correspondence of \eqref{Phi}:
\begin{theorem}\cite[Main Theorem I]{Shim}\label{theoryCM}
Let $P\in \CM(R)\subset X_0^D(H_R)$.
Then, if $[J]\in\Pic(R)$,
\[
[J]^{-1}\ast \varphi(P)=\varphi(P^{\Phi_R([J])}).
\]
\end{theorem}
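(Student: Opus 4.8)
This statement is, up to repackaging, \emph{Shimura's reciprocity law} for the canonical model $X_0^D/\Q$, so the plan is to reduce it to that theorem by rewriting everything in the language of optimal embeddings. First I would recall the adelic model of the curve: with $\widehat B=B\otimes_\Q\widehat\Z$ and $\widehat\cO=\cO\otimes\widehat\Z$,
\[
X_0^D(\C)\;=\;B^\times\backslash\big((\mathcal H^+\sqcup\mathcal H^-)\times\widehat B^\times/\widehat\cO^\times\big).
\]
A Heegner point $P\in\CM(R)$ is represented by a pair $(\tau,b)$, with $\tau$ a CM point on $\mathcal H^{\pm}$ (a fixed point of a subtorus of $M(2,\R)^\times$ isomorphic to $K^\times$) and $b\in\widehat B^\times$ recording the $\cO$-lattice of the associated QM abelian surface. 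Fixing once and for all an embedding $\widehat\varphi\colon\widehat K\hookrightarrow\widehat B$, the optimal embedding $\varphi(P)$ of \eqref{Phi} is the one obtained from $\widehat\varphi$ by conjugating by $b$ and intersecting with $\cO$. Because $B$ is indefinite, strong approximation gives $\widehat B^\times=B^\times\widehat\cO^\times$ away from the archimedean place, so $\Pic(D,1)$ is trivial, $\CM_{D,1}(R)=\CM_\cO(R)$, and $\varphi(P)$ is well defined precisely up to $\cO^\times$-conjugacy, which is exactly the ambiguity built into \eqref{Phi}. The canonical identification $R\simeq\End(A,i)$ of \cite[Definition 1.3.1]{Jor} is what rigidifies the correspondence $P\mapsto\varphi(P)$.

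Next I would recall the two descriptions of the $\ast$-action. Adelically, $\Pic(R)=\widehat K^\times/K^\times\widehat R^\times$ acts on $\CM_{D,1}(R)$ by sending the class of $(\tau,b)$ to that of $(\tau,b\,\widehat\varphi(j))$ for a representing idele $j$ of $J$; this is well defined and, for a suitable convention, is the action denoted $[J]\ast-$ in \cite[\S5]{Vig}. On the moduli side this is the familiar $\Pic(R)$-action on CM abelian varieties: $(A,i)\mapsto(A/A[J],\,i_J)$, where $A[J]=\bigcap_{\alpha\in J}\ker\alpha$ and $i_J$ is the quaternionic structure induced on the quotient; the defining isogeny commutes with $i$, so this stays inside $\CM(R)$. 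Checking that these two pictures of $[J]\ast-$ agree is a lattice computation: dividing the rank-one $\cO$-lattice of $A$ by $J$ corresponds, place by place, to right-translating the adelic representative $b$ by $\widehat\varphi(j)$ (or by its inverse, according to the orientation conventions in force).

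The main input is then Shimura's reciprocity law for the canonical model: for $P=[A,i]\in\CM(R)$ represented by $(\tau,b)$ and $\sigma\in\Gal(K^{\mathrm{ab}}/K)$, the conjugate $P^{\sigma}$ is represented by $(\tau,\,b\,\widehat\varphi(s_\sigma))$, where $s_\sigma\in\widehat K^\times$ is the idele attached to $\sigma$ by the Artin map with a fixed normalisation. Since $\Phi_R\colon\Pic(R)\xrightarrow{\sim}\Gal(H_R/K)$ is by definition the Artin reciprocity map of $R$, plugging $\sigma=\Phi_R([J])$ into this and comparing with the adelic description of $[J]\ast-$ from the previous paragraph identifies $\varphi(P^{\Phi_R([J])})$ with $[J]^{\pm1}\ast\varphi(P)$; the sign is $-1$, forced by the normalisation of the Artin map relative to the convention chosen for $\ast$ (arithmetic versus geometric Frobenius). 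Reading this off gives $[J]^{-1}\ast\varphi(P)=\varphi(P^{\Phi_R([J])})$.

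I expect the only genuine difficulty to be bookkeeping, but of a kind that is easy to get wrong: (i) pinning down all normalisations — the sign in the Artin map, the canonical $R\simeq\End(A,i)$ of \cite{Jor}, the orientations on Eichler orders (trivial here since we are in level $1$, but relevant to the cited form of the statement), and the precise convention for $[J]\ast-$ in \cite{Vig} — so that the inversion in the formula lands on the correct side; and (ii) verifying that the isogeny action $A\mapsto A/A[J]$ matches right-translation by $\widehat\varphi(j)$ on adelic representatives, which is the one place where the complex-multiplication theory of abelian varieties genuinely enters. Beyond these compatibilities the statement is formal: there is no new geometric content past the existence and the characterising property (Shimura's reciprocity at CM points) of the canonical model $X_0^D/\Q$.
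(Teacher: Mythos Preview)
The paper does not give a proof of this theorem at all: it is stated with the citation \cite[Main Theorem I]{Shim} and used as a black box. Your proposal is therefore not competing with any argument in the paper; it is a sketch of how to extract the displayed formula from Shimura's reciprocity law, and as such it is correct in outline. You identify the right ingredients (adelic double-coset model, the $\Pic(R)$-action on optimal embeddings, the Artin map, and the canonical identification $R\simeq\End(A,i)$ from \cite{Jor}) and you are honest about where the content lies, namely in the normalisation bookkeeping that fixes the exponent $-1$.

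One minor point worth tightening if you ever write this out in full: you invoke strong approximation to collapse $\Pic(D,1)$ to a single class and then say the embedding is well defined up to $\cO^\times$-conjugacy, but you should also check that the orientation data (which the paper carries throughout via $\Pic(d,n)$ as \emph{oriented} Eichler orders) is respected by the correspondence \eqref{Phi}; at level $1$ this is indeed harmless, as you note, but the paper's conventions are set up so that the same map $\varphi$ is compatible with the later maps $\phi_s$, $\phi_c$ into oriented classes, and a careful proof would trace this.
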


Fix an algebraic closure $\F$ of $\F_p$. We proceed to describe the specialization map
\begin{equation}\label{espmap}
\Pi:X_0^D(\overline\Q)\ra \cX_p(\F),
\end{equation}
focusing on the specialization of Heegner points.
Let $P=[A,i]\in X_0^D( \bar \Q)$. Pick a field of definition $H$ of $(A,i)$. Fix a prime $\dP$ of $H$ above $p$ and let $\tilde A$ be the specialization of $A$ at $\dP$. By \cite[Theorem 3]{Rib2}, $A$ has potential good reduction. Hence after extending $H$ if necessary, we obtain that $\tilde A$ is smooth over $\F$. The pair $(\tilde A,\tilde i)$, where the embedding $\tilde i$ stands for the composition
$\cO\stackrel{i}{\hookrightarrow} \End(A)\hookrightarrow \End(\tilde A)$,
defines an abelian surface with quaternionic multiplication by $\cO$. Moreover, $\Pi(P)=[\tilde A,\tilde i]\in\cX_p(\F)$ is the specialization of $P$.

Let $P=[A,i]\in\CM(R)$ be a Heegner point. By \cite[Lemma 4.1]{Mol}, when $P$ specializes to a singular point the natural map $\phi_P:\End(A,i)\rightarrow\End(\tilde A,\tilde i)$ turns out to be an optimal embedding in $\CM_{D/p,p}(R)$. If instead $P$ has non-singular specialization, modifying the embedding $\End(A,i)\hookrightarrow\End(\tilde A,\tilde i)$ as in \cite[\S 5]{Mol} one obtains an optimal embedding $\phi_P^c\in\CM_{D/p,1}(R)$. In both cases, the isomorphism class of their target, that lies in $\Pic(D/p,p)$ and $\Pic(D/p,1)$ respectively, characterizes the singular point or the irreducible component where $P$ lies.

The following result describes the specialization of the point $P$ in terms of the behavior of $p$ in $K=R\otimes \Q$, and relates the action of $\Pic(R)\simeq\Gal(H_R/K)$ on $P$ with the corresponding ones on $\phi_P$ and $\phi_P^c$.

\begin{theorem}\label{espsing}
Let $P=[A,i]\in\CM(R)$. Then
$\Pi(P)=[\tilde A,\tilde i]\in(\cX_p)_{\rm{sing}}$ if and only if $p$ ramifies in $K$. In this case, the assignation $P\mapsto\phi_P$ defines a bijective map
\begin{equation}\label{reds}
\phi_s:\CM(R)\longrightarrow\CM_{\frac{D}{p},p}(R)
\end{equation}
satisfying $\phi_s(P^{\Phi_R([J])})=[J]\ast\phi_s(P)$ for all $[J]\in\Pic(R)$. Moreover, if $\Pi(P)\not\in(\cX_p)_{\rm{sing}}$, the assignation $P\mapsto\phi^c_P$ defines a bijective map
\begin{equation}\label{redcc}
\phi_c:\CM(R)\longrightarrow\CM_{\frac{D}{p},1}(R)\sqcup\CM_{\frac{D}{p},1}(R)
\end{equation}
satisfying $\phi_c(P^{\Phi_R([J])})=[J]\ast\phi_c(P)$ for all $[J]\in\Pic(R)$
\end{theorem}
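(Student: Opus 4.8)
The plan is to combine the description of the specialization map $\Pi$ given above with the Cerednik–Drinfeld uniformisation and Ribet's analysis of the endomorphism rings of the abelian surfaces involved. First I would settle the dichotomy ``$\Pi(P)$ singular $\iff$ $p$ ramifies in $K$''. One direction is essentially \cite[Lemma 4.1]{Mol}: if $\Pi(P)=[\tilde A,\tilde i]$ is singular, then $\End(\tilde A,\tilde i)$ is an Eichler order of level $p$ in the definite quaternion algebra of discriminant $D/p$, and the natural map $\phi_P:R\simeq\End(A,i)\hookrightarrow\End(\tilde A,\tilde i)$ is an optimal embedding; since the target algebra is ramified at $p$, an imaginary quadratic field embeds in it only if $p$ does not split, and the optimality at $p$ together with the fact that an Eichler order of level $p$ ``sees'' the prime $p$ forces $p$ to be ramified in $K$ (if $p$ were inert in $K$ it would be inert in the quaternion algebra too, contradicting ramification there — so the embedding would have to factor through a point on a component rather than a singular point). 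Conversely, if $p$ is ramified in $K$, then $p$ is not split in $B$ at the relevant place and a count of optimal embeddings (or the fact that the abelian surface acquires extra endomorphisms in characteristic $p$) shows that $P$ must reduce to a point where the endomorphism ring jumps, i.e. to a singular point; I would phrase this using the moduli/uniformisation dictionary rather than a direct geometric argument, as the point-set bijections \eqref{corrsing} and \eqref{corrcc} already encode the possible shapes of $\End(\tilde A,\tilde i)$.

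Next I would construct the maps $\phi_s$ and $\phi_c$. In the ramified case, $\phi_s$ sends $P$ to the optimal embedding $\phi_P\in\CM_{D/p,p}(R)$ produced by reduction; in the unramified case $\phi_c$ sends $P$ to $\phi_P^c\in\CM_{D/p,1}(R)\sqcup\CM_{D/p,1}(R)$, where the choice of copy records which of the two families of components (in correspondence with the two copies of $\Pic(D/p,1)$ via \eqref{corrcc}) the point $\Pi(P)$ lies on. The well-definedness up to $\cO_{d,n}^*$-conjugacy is part of the construction in \cite[\S 4, \S 5]{Mol}.

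The heart of the matter — and the step I expect to be the main obstacle — is the equivariance $\phi_s(P^{\Phi_R([J])})=[J]\ast\phi_s(P)$ (and likewise for $\phi_c$). Here I would argue as follows: the action of $\Gal(H_R/K)$ on $\CM(R)$ is, by the Shimura reciprocity law (Theorem \ref{theoryCM}), the action of $\Pic(R)$ on the set of optimal embeddings $\CM_{D,1}(R)$ into the \emph{indefinite} Eichler order; on the other hand, the action of $\Pic(R)$ on $\CM_{D/p,p}(R)$ comes from the action of $\Pic(R)$ on ideals, via \cite[\S 5]{Vig}. Both actions are ``the same'' in the sense that they are induced by tensoring the embedding with the $R$-ideal $J$ locally at every place; the reduction map $\phi_P$ is compatible with this because specialization at $\dP$ is a ring homomorphism on endomorphisms and commutes with the Galois action on points (the specialization of $P^{\sigma}$ is the $\Frob$-twist, but $\Frob$ acts trivially on the definite-quaternion side after passing to $\F$, so what survives is exactly the ideal-twist). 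Concretely, I would: (i) use Cerednik–Drinfeld to identify $\Pi$ on Heegner points with the reduction map on the $p$-adic upper half plane side; (ii) track how $\Phi_R([J])$ moves $P$ through this identification, using that $\Phi_R$ is Artin reciprocity and that, locally at $p$, the Galois action is described by the local uniformiser; (iii) match this with the definition of $[J]\ast(-)$ on $\CM_{D/p,p}(R)$. The delicate point is keeping the orientations straight and checking that no extra automorphism is introduced when passing from the local picture at $p$ to the global set $\Pic(D/p,p)$; I would handle this by invoking \cite[Proposition 2.1]{Rib} for the natural orientations and \cite[\S 5]{Mol} for the compatibility of $\phi_P$ with them. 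Once the equivariance is established, bijectivity of $\phi_s$ and $\phi_c$ follows by comparing cardinalities: both sides are principal homogeneous-like spaces for $\Pic(R)$ over each $K$-rational orbit (equivalently, both $\CM(R)$ and $\CM_{D/p,p}(R)$ have the same size by Eichler's mass formula / the trace formula for optimal embeddings), so an equivariant map between them is automatically bijective.
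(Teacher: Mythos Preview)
The paper's proof is a single sentence: it combines \cite[Theorems 5.3, 5.4, 5.8]{Mol} with the Shimura reciprocity law (Theorem~\ref{theoryCM}). In other words, all of the geometric and moduli-theoretic content you are trying to reconstruct has already been packaged in \cite{Mol}, and the present paper merely invokes it. Your proposal is therefore not so much a different route as an attempt to re-derive the cited results, and as such it should be judged against what those theorems actually say.

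There is one concrete error worth flagging. In your argument for ``singular $\Rightarrow$ $p$ ramifies in $K$'' you write that ``the target algebra is ramified at $p$''. It is not: $\End(\tilde A,\tilde i)$ sits inside the definite quaternion algebra of discriminant $D/p$, which is \emph{split} at $p$; the level-$p$ Eichler order is what carries the information at $p$. Consequently the sentence ``if $p$ were inert in $K$ it would be inert in the quaternion algebra too, contradicting ramification there'' does not make sense as stated. The correct local analysis is that an optimal embedding of $R_p$ into the local Eichler order of level $p$ (i.e.\ into $\left(\begin{smallmatrix}\Z_p & \Z_p\\ p\Z_p & \Z_p\end{smallmatrix}\right)$) exists precisely when $p$ is not inert in $K$, and the further distinction between split and ramified is what governs whether the reduction lands on a component or a singular point under Cerednik--Drinfeld; this is exactly the content of \cite[Theorems 5.3, 5.4]{Mol}. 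Your converse direction (``$p$ ramified $\Rightarrow$ singular reduction'') is also left vague; again this is handled in \cite{Mol} via the $p$-adic uniformisation rather than by an endomorphism-jump argument.

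Your treatment of the equivariance is closer to the mark: the paper does obtain it by pairing the $\Pic(R)$-equivariance of the reduction maps established in \cite[Theorem 5.8]{Mol} with Theorem~\ref{theoryCM}, which is essentially your plan (i)--(iii) compressed into a citation. Your bijectivity argument via cardinalities and Eichler's formula is fine in spirit, though once equivariance is known the bijectivity is already part of \cite[Theorems 5.3, 5.4]{Mol}. In short: fix the local argument at $p$, or---more in keeping with the paper---simply cite the three theorems from \cite{Mol} and Theorem~\ref{theoryCM}.
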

\begin{proof}
Combine \cite[Theorem 5.3]{Mol}, \cite[Theorem 5.4]{Mol} and \cite[Theorem 5.8]{Mol} with Theorem \ref{theoryCM}.
\end{proof}

\begin{remark}\label{remespsing}
Let $\pi:\CM_{D/p,p}(R)\ra\Pic(D/p,p)$ and $\pi':\CM_{D/p,1}(R)\sqcup\CM_{D/p,1}(R)\ra\Pic(D/p,1)\sqcup\Pic(D/p,1)$ be the natural forgetful projections that map a conjugacy class of optimal embeddings  $\varphi:R\hookrightarrow \cO_{i}$ to the isomorphism class of its target $\cO_{i}$. Notice that, if the specialization $\Pi(P)$ lies in $(\cX_p)_{\rm{sing}}$, such specialization is characterized by
$\varepsilon_s(\Pi(P))=\pi(\phi_s(P))$. On the other hand, if $\Pi(P)\not\in(\cX_p)_{\rm{sing}}$, then
$\varepsilon_c(\Pi(P))=\pi'(\phi_c(P))$.
\end{remark}

If we are able to compute the map $\phi_s$ explicitly, by Lemma \ref{espord} we shall obtain the thickness of the singular specialization of any Heegner point $P\in\CM(R)$ through the rule:
\begin{equation}\label{espsingCM}
e_{\Pi(P)}=\epsilon(\pi(\phi_s(P))).
\end{equation}
Once we know  the specialization and the thickness of a singular Heegner point in $\cX$, we can easily determine its specialization and its thickness in
$\cW$. Indeed, if
$pr:\cX\ra\cW$ is the blown-down map, then  the thickness of a singular
point $\tilde P\in\cW$ is:
\begin{equation}\label{thickblow_d}
e_{\tilde P}=\sum_{\tilde Q\in\cX_p,\;pr(\tilde Q)=\tilde
P}e_{\tilde Q}+\#\{\mathcal{C}\mbox{ connected
component},\;pr(\mathcal{C})=\tilde P\}-1
\end{equation}

\section{Computable description of the CM map $\phi_s$}\label{compudesc}

In order to give a computable description of the map $\phi_s$ we
shall introduce the concept of $(\cO,\cS)$-bimodule. We will see
that the specialization of any point $P\in\CM(R)$ is characterized
by a certain bimodule and the optimal embedding $\phi_s(P)$ can be
described in purely algebraic terms.

Let $p$ be a prime and let $\cS\in\Pic(p,1)$.
An $(\cO,\cS)$-bimodule $\cM$ is a free module of rank 4 over $\Z$ endowed with structures of left $\cO$-module and right $\cS$-module. 
The $(\cO,\cS)$-bimodules were introduced by Ribet in \cite{Rib} and they provide a useful tool for the analysis of certain supersingular points on the fiber $\cX_p$, as we now describe.

Let $\tilde P=[\tilde A,\tilde i]\in\cX_p(\F)$ such that $\tilde A$
is isomorphic to the product of two supersingular elliptic curves.
By \cite[Theorem 3.5]{Shi}, $\tilde A\simeq \tilde E^2$ for any fixed
supersingular elliptic curve $\tilde E$ over $\F$. Let $\cS$ be the
endomorphism ring of $\tilde E$. Then $\cS$ is a maximal order in a
definite quaternion algebra of discriminant $p$. By \cite[p. 37]{Rib}, $\cS$ comes equipped with a
natural orientation at $p$ and therefore can
be regarded as an element of $\Pic(p,1)$. Hence, giving such an
abelian surface $(\tilde A,\tilde i)$ with QM by $\cO$ is equivalent
to providing an optimal embedding
\[
\tilde i:\cO\hookrightarrow M(2,\cS)\simeq\End(\tilde A).
\]
Moreover, such a map provides a left $\cO$-module structure on the
right $\cS$-module $\cM_{\tilde P}=\cS\times\cS$. Since
$\cS\times\cS$ is free of rank 4 over $\Z$, $\cM_{\tilde P}$ defines
an $(\cO,\cS)$-bimodule.

Given $\tilde P=[\tilde A,\tilde i]\in\cX_p(\F)$ as above, one can compute the endomorphism ring $\End(\tilde A,\tilde i)$ in terms of the bimodule $\cM_{\tilde P}$. Let $\End_\cO^\cS(\cM_{\tilde P})$ be the set of $(\cO,\cS)$-module endomorphism of $\cM_{\tilde P}$, i.e., $\Z$-endomorphisms which are equivariant for the left action of $\cO$ and the right action of $\cS$. Then it is easy to check that $\End(\tilde A,\tilde i)=\End_\cO^\cS(\cM_{\tilde P})$ (cf.\cite[p.7]{Mol}).

Let $P=[A,i]\in\CM(R)$ be a Heegner point and assume that $p\mid D$. It follows from \cite[\S 4]{Rib} that if $\Pi(P)=[\tilde A,\tilde i]\in(\cX_p)_{\rm{sing}}$, the abelian surface $\tilde A$ is isomorphic to a product of supersingular elliptic curves. It thus makes sense to consider its attached bimodule $\cM_{\Pi(P)}$.
Next theorem allows us to describe the maps $\phi_s$ in terms of the $(\cO,\cS)$-bimodule $\cM_{\Pi(P)}$.
\begin{theorem}\label{redbim}\cite[Theorem 4.2]{Mol} Let $P=[A,i]\in\CM(R)$ be a Heegner point and let $(\varphi(P):R\hookrightarrow\cO)\in\CM_{D,1}(R)$ be its corresponding optimal embedding. Assume that $p\mid D$ and $\Pi(P)\in(\cX_p)_{\rm{sing}}$. Then,
\begin{itemize}
\item [(a)]There exists an optimal embedding $\psi_p:R\hookrightarrow \cS$ such that
\begin{equation}\label{bimodul}
\cM_{\Pi(P)}=\cO\otimes_R\cS,
\end{equation}
where $\cS$ is regarded as left $R$-module via $\psi_p$ and $\cO$ as right $R$-module via $\varphi(P)$.

\item [(b)] The optimal embedding $\phi_s(P)$ is given by the rule
\begin{equation}\label{rulephi}
\begin{array}{rcc}
R & \hookrightarrow & \End_{\cO}^{\cS}(\cO\otimes_R\cS) \\
\delta & \longmapsto & \alpha\otimes s  \mapsto  \alpha\delta\otimes s,
 \end{array}
\end{equation}
up to conjugation by $\End_{\cO}^{\cS}(\cO\otimes_R\cS)^{\times}$.
\end{itemize}
\end{theorem}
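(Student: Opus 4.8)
\textbf{Proof proposal for Theorem \ref{redbim}.}

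The plan is to reduce everything to a local computation at $p$, where the abelian surface $(\tilde A,\tilde i)$ in the supersingular fiber is governed by its attached $(\cO,\cS)$-bimodule, and then to exploit the fact that a Heegner point $P=[A,i]\in\CM(R)$ specializing to a singular point acquires an extra endomorphism structure coming from $R$. More precisely, since $P$ has potential good reduction, after a finite base change the specialization $\tilde A$ of $A$ is a product of supersingular elliptic curves, so $\cM_{\Pi(P)}=\cS\times\cS$ is a well-defined $(\cO,\cS)$-bimodule as recalled above. The crucial point is that the order $R$ acts on $A$ via $i$ in a way commuting with $\cO$, hence $R$ acts on $\tilde A$ commuting with $\tilde i$; this endows $\cM_{\Pi(P)}$ with a compatible \emph{right} $R$-module structure (via $\varphi(P)$, thought of inside $\cO$ acting on the left) and one checks that $\cM_{\Pi(P)}$ becomes a module over $\cO\otimes_R\cS$ for a suitable optimal embedding $\psi_p:R\hookrightarrow\cS$.

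For part (a), the key step is to identify $\psi_p$ and to verify the isomorphism $\cM_{\Pi(P)}\cong\cO\otimes_R\cS$ of $(\cO,\cS)$-bimodules. The embedding $\psi_p$ is obtained by restricting the $R$-action on $\tilde A\simeq\tilde E^2$ to one of the factors, or equivalently from the reduction of the complex-multiplication structure; that it is \emph{optimal} is where the hypothesis $\Pi(P)\in(\cX_p)_{\rm{sing}}$ is used, together with \cite[Lemma 4.1]{Mol} and the orientation conventions of \cite[Proposition 2.1]{Rib}. Once $\psi_p$ is fixed, both $\cM_{\Pi(P)}$ and $\cO\otimes_R\cS$ are free of rank $4$ over $\Z$, carry the same left $\cO$- and right $\cS$-actions, and the comparison reduces, prime by prime (only $p$ being interesting), to a statement about lattices in $B_p\cong\M(2,\Q_p)$ or the division algebra, which follows from the theory of Ribet bimodules in \cite[\S 4]{Rib}; this is essentially the content of \cite[Theorem 4.2]{Mol}, whose proof I would invoke.

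For part (b), once the bimodule isomorphism $\cM_{\Pi(P)}\cong\cO\otimes_R\cS$ is established, one computes $\End(\tilde A,\tilde i)=\End_\cO^\cS(\cM_{\Pi(P)})$ directly: right multiplication by an element $\delta\in R$ on the first tensor factor, $\alpha\otimes s\mapsto\alpha\delta\otimes s$, is visibly $\Z$-linear, left-$\cO$-equivariant and right-$\cS$-equivariant, hence gives a well-defined ring homomorphism $R\to\End_\cO^\cS(\cO\otimes_R\cS)$. One then checks it is injective and \emph{optimal}, i.e.\ that its image is exactly $R\otimes\Q$ intersected with the Eichler order $\End_\cO^\cS(\cO\otimes_R\cS)$ of level $p$; this again uses that $\varphi(P)$ is optimal in $\cO$ and that $\cS$ is maximal. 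Finally, I would argue that this embedding coincides with $\phi_s(P)$ up to conjugation by $\End_\cO^\cS(\cO\otimes_R\cS)^\times$: both are produced by the same reduction-of-endomorphisms recipe from \cite[\S 5]{Mol}, so the identification follows from Theorem \ref{espsing} together with the naturality of the construction. The main obstacle I anticipate is not any single computation but rather keeping the orientations and the identification of $\cS$ with $\End(\tilde E)$ consistent throughout, so that the optimality assertions and the action of $\Pic(R)$ match on the nose; this bookkeeping is exactly what \cite[\S 2--5]{Mol} and \cite{Rib} set up, and I would lean on those normalizations rather than reprove them.
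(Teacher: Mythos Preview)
The paper does not prove Theorem \ref{redbim} at all: it is quoted verbatim as \cite[Theorem 4.2]{Mol}, with no argument beyond the citation. Your proposal is therefore consistent with the paper's ``proof'', since you too ultimately invoke \cite[Theorem 4.2]{Mol} for part (a) and the constructions of \cite[\S 4--5]{Mol} and \cite{Rib} for part (b); the additional narrative you supply (how $\psi_p$ arises from the reduction of the CM elliptic curve, why the bimodule comparison localizes to $p$, why right multiplication by $\delta$ is $(\cO,\cS)$-equivariant) is a reasonable outline of what those references contain, and nothing in it is wrong.

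One small point worth tightening: you describe the $R$-action on $\cM_{\Pi(P)}$ as a ``right $R$-module structure via $\varphi(P)$, thought of inside $\cO$ acting on the left''. In the actual statement, $\cO$ is a \emph{right} $R$-module via $\varphi(P)$ and $\cS$ is a \emph{left} $R$-module via $\psi_p$, so that $\cO\otimes_R\cS$ makes sense; the rule $\alpha\otimes s\mapsto \alpha\delta\otimes s$ then uses that $R$ is commutative. Your sketch is compatible with this once the wording is adjusted, but since you flag the orientation/bookkeeping as the main hazard, it is worth being precise here.
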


\begin{remark}
The embedding $\psi_p:R\hookrightarrow \cS$ depends on the immersion $\rho:H_R\hookrightarrow \overline\Q_p$ chosen for the specialization.
Given another optimal embedding $\psi_p'\in\CM_{p,1}(R)$, there exists a different immersion $\rho':H_R\hookrightarrow \overline\Q_p$ such that, specializing via $\rho'$, Theorem \ref{redbim} applies with $\psi_p'$ instead of $\psi_p$.
Indeed, as one can see in \cite[\S 4]{Mol}, the embedding $\psi_p$ corresponds to the inclusion $\End(E)\hookrightarrow\End(\tilde E)$, where $E$ is the CM elliptic curve $\C/R$ and $\tilde E$ is its specialization via $\rho$. Both, the set of immersions $H_R\hookrightarrow \overline\Q_p$ and $\CM_{p,1}(R)$ are $\Pic(R)$-torsors and the action of $\sigma\in\Gal(H_R/K)\simeq\Pic(R)$ turns $\psi_p$ into the embedding $\End(E^\sigma)\hookrightarrow\End(\tilde E^\sigma)$, where $\tilde E^\sigma$ is specialized by means of $\rho$. It is clear that such an optimal embedding coincides with the one obtained specializing $E$ via $\rho'=\rho\circ\sigma$.
\end{remark}

Since the above theorem describes the map $\phi_s$ in terms of purely algebraic objects, we shall be able to compute the image $\phi_s(P)$ starting from the corresponding embedding $\varphi(P)\in\CM_{D,1}(R)$ of \eqref{Phi}. Next,  we shall present an explicit description  of  $\phi_s(P)$ obtained from an explicit description of  $\varphi(P)$.

\begin{definition}\label{quatercomp}
Given a quaternion algebra $B$, an imaginary quadratic field $K$ and
an embedding $\psi :K\hookrightarrow B$, \emph{the
quaternionic complement of $\psi(K)$} is the set
\[
\psi(K)_-=\{\alpha\in
B\colon\;\alpha\psi(x)=\psi(x^\sigma)\alpha,\;\;\mbox{for all
}x\in K\},
\]
where $\sigma$ is the single non-trivial element of $\Gal(K/\Q)$.
By \cite[\S 1]{Vig}, $\psi(K)_-$ is a $K$-vector space of dimension 1. We
sometimes refer the element of a basis as a \emph{quaternionic
complement of} $\psi$. It is an element $j\in B$ such that
$j\psi(x)=\psi(x^\sigma)j,\;\;\mbox{for all }x\in K$ and $j^2\in\Q$.
\end{definition}

Let $(\psi:R\hookrightarrow \cO_{d,n})\in\CM_{d,n}(R)$ be an optimal
embedding. The free right $R$-module structure of $\cO_{d,n}$ given
by $\psi$ provides a decomposition $\cO_{d,n}\simeq R\oplus e I$,
where $I$ is a locally free $R$-module and $e\in \cO_{d,n}^0$. This
decomposition determines completely $\psi$. On the other hand, $\cO_{d,n}^0$
is characterized by the presentation $\cO_{d,n}^0\simeq K\oplus j
K$, where $\cO_{d,n}^0$ is also regarded as a right $K$-vector space
via $\psi$ and $jK$ is the quaternionic complement of
$K\stackrel{\psi}{\hookrightarrow}\cO_{d,n}^0$. Recall that $j$ is
determined by $j^2\in\Q$ and the fact that
$j\psi(x)=\psi(x^\sigma)j,\;\;\mbox{for all }x\in K$.

In conclusion, in order to compute $(\phi_s(P):R\hookrightarrow
\Lambda)\in\CM_{D/p,Np}(R)$ explicitly, we only have to present the
corresponding decompositions of $\Lambda$ and $\Lambda^0$ via
$\phi_s(P)$.

\begin{theorem}\label{compphi} Let $P\in\CM(R)$ be a Heegner point and assume that $p\mid D$ and $\Pi(P)\in(\cX_p)_{\rm{sing}}$.
Let $(\psi_p :R\hookrightarrow\cS)\in \CM_{p,1}(R)$ be the fixed
optimal embedding of Theorem \ref{redbim}. Write $\cS^0=H$ and let $H=K\oplus
j_2K$, $\cS\simeq R\oplus e_2I_2$, $j_2^2=m_2$,
$e_2=e_{2,1}+j_2e_{2,2}$, be the presentations of $H$ and $\cS$
induced by $\psi_p$. Analogously, let $B=K\oplus j_1K$ and
$\cO\simeq R\oplus e_1I_1$, $j_1^2=m_1$, $e_1=e_{1,1}+j_1e_{1,2}$,
be the presentations of $B$ and $\cO$ induced by $\varphi(P)$. Then,
the optimal embedding $\phi_s(P):R\hookrightarrow \Lambda$ is
characterized by:
$$
\Lambda^0=K\oplus j_3 K \quad \text{and }\quad \Lambda=R\oplus
e_3I_3\,,
$$
where $j_3$  is a quaternionic complement of $\phi_s(P)$  such that
$j_3^2=m_1\cdot m_2$, $e_3=e_{2,1}\cdot
e_{1,1}^\sigma-j_3e_{2,2}\cdot e_{1,2}^\sigma$ and
$$
I_3=\left\{\begin{array}{ll}
I_2I_1^\sigma & \mbox{if } {e_{1,1}}=0,e_{2,1}=0,\\
I_2I_1^\sigma\cap \frac{1}{e_{1,1}^\sigma}I_2 & \mbox{if }{e_{1,1}}\neq 0,e_{2,1}=0,\\
(I_2\cap \frac{1}{e_{2,1}}R)I_1^\sigma & \mbox{if } {e_{1,1}}=0,e_{2,1}\neq 0,\\
(I_2\cap \frac{1}{e_{2,1}}R)I_1^\sigma\cap
\frac{1}{e_{1,1}^\sigma}I_2 & \mbox{if }{e_{1,1}}\neq 0,e_{2,1}\neq
0.
\end{array}\right.
$$
\end{theorem}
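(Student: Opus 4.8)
The plan is to make the bimodule $\cM_{\Pi(P)} = \cO \otimes_R \cS$ of Theorem \ref{redbim} completely explicit, using the presentations induced by $\varphi(P)$ and $\psi_p$, and then to read off the presentation of $\Lambda = \End_\cO^\cS(\cO\otimes_R\cS)$ via the embedding rule \eqref{rulephi}. First I would fix notation: write $\cO = R \oplus e_1 I_1$ as a right $R$-module via $\varphi(P)$, with $B = \cO^0 = K \oplus j_1 K$, $j_1^2 = m_1$, and $e_1 = e_{1,1} + j_1 e_{1,2}$ with $e_{1,i}\in K$; similarly $\cS = R \oplus e_2 I_2$ via $\psi_p$, with $H = \cS^0 = K \oplus j_2 K$, $j_2^2 = m_2$, $e_2 = e_{2,1}+j_2 e_{2,2}$. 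Tensoring over $R$ (using $\cO$ as right $R$-module and $\cS$ as left $R$-module via $\psi_p$), we get $\cM_{\Pi(P)}^0 := \cO^0\otimes_R H$, which as a vector space decomposes according to the four tensor products of the basis elements $\{1, j_1\}\otimes\{1, j_2\}$. The key computation is to identify this with a quaternion algebra $\Lambda^0$: I would check that the element $j_3 := j_1 \otimes j_2$ (or a suitable scalar multiple thereof) anticommutes with $K$ embedded diagonally via $\delta \mapsto (1\otimes\delta) = $ the image of $\phi_s(P)$, and satisfies $j_3^2 = m_1 m_2$, thereby exhibiting $\Lambda^0 = K \oplus j_3 K$. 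Here one must be careful about which $K$-action is used: in \eqref{rulephi} the embedding $\phi_s(P)$ acts by $\alpha\otimes s \mapsto \alpha\delta\otimes s$, i.e. on the \emph{right} $R$-factor of $\cO$, so the conjugation behaviour of $j_1$ (which satisfies $j_1\varphi(x) = \varphi(x^\sigma)j_1$) is what produces the twist by $\sigma$ that appears in the formulas for $e_3$ and $I_3$.

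The heart of the proof is the determination of the $R$-lattice $\Lambda = \End_\cO^\cS(\cO\otimes_R\cS)$ inside $\Lambda^0$. An $(\cO,\cS)$-endomorphism of $\cO\otimes_R\cS$ is determined by its value on $1\otimes 1$, and the conditions of $\cO$-left-linearity and $\cS$-right-linearity translate into: the image of $1\otimes 1$ must lie in the centralizer (which is $\Lambda^0$) and must carry the lattice $\cM_{\Pi(P)} = \cO\otimes_R\cS$ into itself. I would spell out $\cO\otimes_R\cS$ as an explicit $R$-lattice spanned by $1\otimes 1$, $e_1\otimes 1$ (scaled by $I_1$), $1\otimes e_2$ (scaled by $I_2$), and $e_1\otimes e_2$ (scaled by $I_1 I_2$), then impose that multiplication by a general element $r + e_3 t$ of $\Lambda^0$ preserves this lattice. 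Writing out $e_3 = e_{2,1}e_{1,1}^\sigma - j_3 e_{2,2}e_{1,2}^\sigma$ — whose precise form comes from multiplying out $(e_{2,1}+j_2 e_{2,2})$ against $(e_{1,1}+j_1 e_{1,2})$ under the tensor identification and applying $\sigma$ to the $\cO$-factor entries as forced by the $j_1$-relation — the lattice-stability condition becomes a system of congruences on $I_3$, and the case distinction according to whether $e_{1,1}$ and/or $e_{2,1}$ vanish arises exactly because these coefficients appear in denominators: when $e_{1,1}=0$ certain constraints disappear, when $e_{1,1}\neq 0$ one picks up the extra intersection $\cap\, \frac{1}{e_{1,1}^\sigma}I_2$, and symmetrically for $e_{2,1}$ with the factor $I_2\cap\frac{1}{e_{2,1}}R$.

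The main obstacle I anticipate is bookkeeping the non-commutativity correctly through the tensor product: $\cO$ is a \emph{right} $R$-module via $\varphi(P)$ but also carries its own ring structure, and $j_1$ intertwines the $R$-action with its $\sigma$-conjugate, so the "coordinates" $e_{1,1}, e_{1,2}$ get conjugated when they cross to the other side — this is where all the $(\,\cdot\,)^\sigma$ superscripts in the statement originate, and getting every one of them in the right place is the delicate part. A secondary subtlety is that the final answer is only well-defined up to conjugation by $\Lambda^\times = \End_\cO^\cS(\cM_{\Pi(P)})^\times$, so I would note that different choices of the generators $e_1, e_2$ (equivalently, of representatives for the locally free modules $I_1, I_2$) change $e_3$ and $I_3$ by such a conjugation, making the recipe well-defined as a conjugacy class of optimal embeddings in $\CM_{D/p,p}(R)$. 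Once the lattice computation is done in the generic case $e_{1,1}e_{2,1}\neq 0$, the three degenerate cases follow by the same argument with the vanishing terms omitted, so I would do the generic case carefully and then remark that the others are obtained by specialization.
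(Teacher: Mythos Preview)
Your overall strategy is right and would lead to a valid proof, but the concrete realization you choose differs from the paper's in a way worth noting. You identify $\Lambda$ with a sublattice of the bimodule $\cM=\cO\otimes_R\cS$ itself via the evaluation map $\phi\mapsto\phi(1\otimes1)$, so that $\Lambda\simeq\{m\in\cM:\varphi(P)(r)\cdot m=m\cdot\psi_p(r)\text{ for all }r\in R\}$, with $j_3$ corresponding to $j_1\otimes j_2$. The paper instead exploits the rank-$2$ right $\cS$-module decomposition $\cM\simeq\cS\oplus e_1(I_1\otimes_R\cS)$ to embed $\Lambda^0$ into $M_2(H)$ as the centralizer of $B$ acting on $H\oplus j_1H$; there $j_3$ is the explicit matrix $\left(\begin{smallmatrix}0&m_1j_2\\ j_2&0\end{smallmatrix}\right)$, and the integral structure is obtained by conjugating by the change-of-basis matrix $M_{e_1}=\left(\begin{smallmatrix}1&e_{1,1}\\0&e_{1,2}\end{smallmatrix}\right)$ and imposing stability of $\cS\times(I_1\otimes_R\cS)$.

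What each buys: the matrix realization makes the ring structure on $\Lambda^0$ manifest (it is matrix multiplication), and reduces the lattice check to two coupled conditions rather than four; in your intrinsic picture, by contrast, composition of endomorphisms does \emph{not} correspond to any natural product of their images in $\cM$, so statements like ``$j_3:=j_1\otimes j_2$ satisfies $j_3^2=m_1m_2$'' require translating back to endomorphisms to even make sense (and you should say this explicitly). A second difference is the origin of $e_3$: in the paper it does not arise by ``multiplying out $e_2$ against $e_1$'' but rather as the change of coordinates $a+j_3b=a'+e_3b'$ (with $b'=-b/(e_{2,2}e_{1,2}^\sigma)$) that diagonalizes the stability equations; the case distinction on $e_{1,1},e_{2,1}$ then falls out directly from which coefficients survive in those equations. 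Your route is more conceptual, the paper's more computational, but both reach the same $I_3$.
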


\begin{proof}
Attached to the right $K$-module structure of $B$ via $\varphi(P)$
we have two distinct basis, namely $\langle1,j_1\rangle$ and
$\langle1,e_1\rangle$. We denote by
$M_{e_1}=\left(\begin{array}{cc}1&e_{1,1}\\0&e_{1,2}\end{array}\right)$
the matrix attached to the change of basis.

It follows that an element $z=x+j_1y\in K\oplus j_1K=B$ acts
on $K\oplus j_1K$ via the matrix
\[
M_z=\left(\begin{array}{cc}x & m_1y^\sigma\\y &
x^\sigma\end{array}\right)\in M_2(K).
\]

Since $B\otimes_K H=(K\oplus j_1K)\otimes_K H= H\oplus j_1 H$, any
element $z=x+j_1y\in  B$ acts on $B\otimes_K H$ through the same
matrix $M_z$. Hence
\[
\Lambda^0=\End_B^H(B\otimes_K H)=\left\{\left(\begin{array}{cc}a &
b\\c & d\end{array}\right)\in M_2(H)\;/\;M_z\left(\begin{array}{cc}a
& b\\c & d\end{array}\right)=\left(\begin{array}{cc}a & b\\c &
d\end{array}\right)M_z\right\}.
\]
This implies that $a=d$, $m_1 c=b$, $a\in\{x\in H
\colon\;xy=yx,\;\mbox{ for all }y\in K\}=K$ and $b\in\{x\in H
\colon\;xy=y^\sigma x,\;\mbox{ for all } y\in K\}=j_2K$. Thus
\begin{equation}\label{eqi3}
\Lambda^0= K \oplus \left(\begin{array}{cc}0 & m_1j_2\\j_2 &
0\end{array}\right)K = K\oplus j_3K,\quad
j_3=\left(\begin{array}{cc}0 & m_1j_2\\j_2 & 0\end{array}\right)
\end{equation}
where $j_3$ satisfies $xj_3=j_3x^\sigma$ for all $x\in K$ and
$j_3^2=m_1m_2\in\Q$. Hence $j_3$ is a quaternionic complement of
$\phi_s(P):K\hookrightarrow\Lambda^0$.

The $R$-module decomposition $\cO=R\oplus e_1I_1$ yields the
$\cS$-module structure of $\cO\otimes_R\cS$ as $\cS\times
(I_1\otimes_R\cS)$ with basis $\langle 1,e_1\rangle$. We turn it
into our original basis $\langle 1,j_1\rangle$ by means of $M_{e_1}$. Then,
\[
\Lambda=\{(a+j_3b)\in
\Lambda\otimes_{\Z}\Q\;/\;M_{e_1}^{-1}(a+j_3b)M_{e_1}(\cS\times
(I_1\otimes_R\cS))\subseteq\cS\times (I_1\otimes_R\cS)\}.
\]
We obtain that
$M_{e_1}^{-1}(a+j_3b)M_{e_1}=a+M_{e_1}^{-1}j_3M_{e_1}b$ where:
\[
M_{e_1}^{-1}j_3M_{e_1}=\left(\begin{array}{cc}-j_2 & 0\\0 &
j_2\end{array}\right)\frac{1}{e_{1,2}^\sigma
}\left(\begin{array}{cc}e_{1,1}^\sigma & \Norm(e_1)\\1 &
e_{1,1}\end{array}\right).
\]
Hence the $R$-module $\Lambda$ consists of elements $a+j_3b\in \Lambda^0$ with $a,b\in K$ such that,
for all $x\in\cS$ and all $y\in(I_1\otimes_R\cS)$,
\[
\left(\begin{array}{c}ax \\
ay\end{array}\right)+\left(\begin{array}{cc}-j_2 & 0\\0 &
j_2\end{array}\right)\frac{1}{e_{1,2}^\sigma
}\left(\begin{array}{cc}e_{1,1}^\sigma & \Norm(e_1)\\1 &
e_{1,1}\end{array}\right)\left(\begin{array}{c}bx \\
by\end{array}\right)\in \cS\times(I_1\otimes_R\cS).
\]
We deduce that
\begin{equation}\label{eq1}
\left\{\begin{array}{ll}
\displaystyle{ax-j_2\frac{e_{1,1}^\sigma bx+\Norm(e_1)by }{e_{1,2}^\sigma }} & \displaystyle{=ax+\frac{e_{2,1}(e_{1,1}^\sigma bx+\Norm(e_1)by) }{e_{1,2}^\sigma\cdot e_{2,2}}
-e_2\frac{e_{1,1}^\sigma bx+\Norm(e_1)by }{e_{1,2}^\sigma\cdot e_{2,2}}}\in\cS\\[8 pt]
\displaystyle{ay+j_2\frac{bx+e_{1,1}by}{e_{1,2}^\sigma}} & \displaystyle{=ay-\frac{e_{2,1}(bx+e_{1,1}by)}{e_{1,2}^\sigma\cdot
e_{2,2}}+e_2\frac{bx+e_{1,1}by}{e_{1,2}^\sigma\cdot
e_{2,2}}} \in I_1\otimes_R\cS.
\end{array}\right.
\end{equation}

Set $e_3=e_{2,1}\cdot e_{1,1}^\sigma-j_3e_{2,2}\cdot e_{1,2}^\sigma$. For all $a,b\in K$, we
have that $a+j_3b=a'+e_3b'$, where  $a'=a+\frac{e_{2,1}e_{1,1}^\sigma}{e_{2,2}\cdot
e_{1^2}^\sigma}b$ and $ b'=-\frac{b}{e_{2,2}\cdot e_{1,2}^\sigma}$.

Thus the expressions of \eqref{eq1} become (with this new basis $\langle 1,e_3\rangle$):
\begin{equation}\label{eq2}
(a'x-e_{2,1}\Norm(e_1)b'y) +e_2(e_{1,1}^\sigma b'x+\Norm(e_1)b'y )\in\cS
\end{equation}
\begin{equation}\label{eqp}
(a'y+e_{2,1}\Trace(e_{1,1})b'y+e_{2,1}b'x)-e_2(b'x+e_{1,1}b'y)\in I_1\otimes_R\cS.
\end{equation}

In particular, assuming $y=0$ we obtain from \eqref{eq2} that $(a'+e_2e_{1,1}^\sigma b')x\in\cS=R\oplus
e_2I_2$. This implies that $a'\in R$ and $e_{1,1}^\sigma b'\in I_2$. It follows from \eqref{eqp}
that $(e_{2,1}-e_2)b'x\in(I_1\otimes_R\cS)$, that is $b'^\sigma
j_2e_{2,2}=(e_{2,1}-e_2)b'\in(I_1\otimes_R\cS)$. Hence $b'\in I_1^\sigma I_2'$ where
\[
I_2'=\left\{\begin{array}{lr}I_2\cap \frac{1}{e_{2,1}}R & \mbox{if }e_{2,1}\neq 0\\I_2 & \mbox{if
} e_{2,1}=0.\end{array}\right.
\]

Assuming that $x=0$, it follows from \eqref{eq2} that $-(e_{2,1}-e_2)\Norm(e_1)b'y\in\cS$, which
is deduced from $(e_{2,1}-e_2)b'\in(I_1\otimes_R\cS)$ above and the fact that, since
$e_1I_1\in\cO$, $\Norm(e_1)I_1I_1^\sigma\subseteq R$. Moreover, by \eqref{eqp} we have that
$(a'+e_{2,1}\Trace(e_{1,1})b'-e_2e_{1,1}b')y\in I_1\otimes_R\cS$, which is again deduced from
$(e_{2,1}-e_2)b'\in I_1\otimes_R\cS$ and $(a'+e_2e_{1,1}^\sigma b')x\in\cS$, since
\[
(a'+e_{2,1}\Trace(e_{1,1})b'-e_2e_{1,1}b')y=y(a'+e_2e_{1,1}^\sigma
b')+(e_{2,1}-e_2)\Trace(e_{1,1}y)b'\in I_1\otimes_R\cS.
\]
for all $y\in I_1$ and $\Trace(e_{1,1}y)=\Trace(e_1y)\in\Trace(e_1I_1)\subseteq \Z\subset R$.

In conclusion, $a'+e_3b'\in \Lambda$ if and only if $a'\in R$ and
$b'\in I_3$, where
\begin{equation}\label{eqI3}
I_3=\left\{\begin{array}{lr}I_2'I_1^\sigma\cap \frac{1}{e_{1,1}^\sigma}I_2 & \mbox{if }e_{1,1}\neq
0\\I_2'I_1^\sigma & \mbox{if } e_{1,1}=0.\end{array}\right.
\end{equation}
Thus
$\Lambda\simeq R\oplus e_3I_3,$  where $e_3=e_{2,1}\cdot e_{1,1}-j_3e_{2,2}\cdot
e_{1,2}$,
and $j_3$ is a quaternionic complement of $\phi_s(P)$, such that $j_3^2=m_1m_2$.

\end{proof}

\section{Specialization of Heegner points with class number 1}\label{Esp_h_R1}

Let $p$ be a prime not dividing $D$. Notice that the special fiber $\cX_p$ at $p$ is a smooth curve over $\F_p$. We say that a point $P=[\tilde A,\tilde i]\in\cX_p(\overline\F_p)$ is \emph{supersingular} if $\tilde A$ is isogenous to a product of supersingular elliptic curves over $\overline\F_p$. Write $(\cX_p)_{ss}$ for the set of supersingular points of $\cX_p$.

It is well known that the set $(\cX_p)_{ss}$ is in one-to-one correspondence with $\Pic(Dp,1)$ (cf. \cite[\S 3]{Rib}). We denote the corresponding bijection by:
\begin{equation}\label{corrssing}
\varepsilon_{ss}:(\cX_p)_{ss}\stackrel{1:1}{\longleftrightarrow}\Pic(Dp,1).
\end{equation}
In analogy with the previous situation, for any $\tilde P=[\tilde A,\tilde i]\in(\cX_p)_{ss}$ the endomorphism ring $\End(\tilde A,\tilde i)$ is a maximal order in a quaternion algebra of discriminant $Dp$ endowed with a natural orientation (cf. \cite[Proposition 2.1]{Rib}). Moreover,
the map $\varepsilon_{ss}$ is given by $\varepsilon_{ss}(\tilde P)=\End(\tilde A,\tilde i)\in\Pic(Dp,1)$.

Let $K$ be an imaginary quadratic field and let $R$ be an order in $K$ of conductor $c$. Let $P=[A,i]\in\CM(R)$ be a Heegner point. Recall the description of the specialization map $\Pi:X_0^D(\overline\Q)\ra \cX_p(\F)$ of \eqref{espmap}. By \cite[\S 2.2]{Mol}, $A$ is isomorphic to the product of two isogenous elliptic curves with CM by $R$, say $A\simeq E_1\times E_2$. Therefore, since a CM elliptic curve specializes to a supersingular elliptic curve if and only if $p$ does not split in $K$, we deduce that $\Pi(P)=[\tilde A,\tilde i]\in(\cX_p)_{ss}$ if and only if $p$ does not split in $K$.

We proceed to describe the specialization of those Heegner points that lie in $\cX_p\setminus (\cX_p)_{ss}$.

\begin{proposition}\label{Ordred}
Let $P=[A,i]\in\CM(R)$ be a Heegner point and assume that $\Pi(P)=[\tilde A,\tilde i]\not\in(\cX_p)_{ss}$ (i.e. $p$ splits in $K$). Then the natural map
$\phi_P:\End^0(A,i)\hookrightarrow\End^0(\widetilde A,\widetilde i)$
is an isomorphism.
\end{proposition}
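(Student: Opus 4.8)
The plan is to analyze the endomorphism algebras on both sides of the reduction map $\phi_P$ and show they have the same dimension over $\Q$, which—combined with injectivity of $\phi_P$—forces it to be an isomorphism. First I would recall that $A \simeq E_1 \times E_2$ where $E_1, E_2$ are isogenous elliptic curves with CM by $R$, and that since $p$ splits in $K = R \otimes \Q$, the curves $E_1, E_2$ reduce to \emph{ordinary} elliptic curves $\tilde E_1, \tilde E_2$ over $\overline{\F}_p$. Hence $\tilde A \simeq \tilde E_1 \times \tilde E_2$ with $\tilde E_1, \tilde E_2$ ordinary and isogenous, so $\End^0(\tilde E_i) \simeq K' $ for the imaginary quadratic field $K'$ cut out by the characteristic polynomial of Frobenius; in fact, since reduction is injective on endomorphisms of a CM elliptic curve and $\End^0(E_i) = K$, we get $K \hookrightarrow K'$ hence $K' = K$. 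Thus $\End^0(\tilde A) = \End^0(\tilde E_1 \times \tilde E_2) \simeq M_2(K)$, matching $\End^0(A) = \End^0(E_1 \times E_2) \simeq M_2(K)$.

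Next I would pin down $\End^0(A,i)$ and $\End^0(\tilde A, \tilde i)$ as the centralizers of the quaternionic action inside these matrix algebras. Concretely, $i : \cO \hookrightarrow \End(A)$ induces $B = \cO^0 \hookrightarrow \End^0(A) \simeq M_2(K)$, and $\End^0(A,i)$ is the commutant of the image. Since $B$ is a quaternion algebra, its image in $M_2(K)$ is a four-dimensional simple subalgebra; by the double centralizer theorem over the field $K$, the commutant of $B \otimes_\Q K$ inside $M_2(K) \simeq B \otimes_\Q K$ (using that $B$ splits over $K$, as $K$ is a splitting field — $P$ being a Heegner point by an order in $K$ means $K$ embeds in $B$) is exactly the centre, namely $K$ itself. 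Hence $\End^0(A,i) \simeq K$, a field of degree $2$ over $\Q$. The identical argument applies verbatim on the special fiber: $\tilde i$ gives $B \hookrightarrow \End^0(\tilde A) \simeq M_2(K)$, and the double centralizer theorem yields $\End^0(\tilde A, \tilde i) \simeq K$, again of $\Q$-dimension $2$.

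Finally, the natural map $\phi_P : \End^0(A,i) \hookrightarrow \End^0(\tilde A, \tilde i)$ is the restriction of the reduction map on endomorphism algebras, which is injective because reduction of abelian varieties with good reduction is injective on endomorphisms (it is injective already on $\End(A) \to \End(\tilde A)$ and stays so after tensoring with $\Q$). An injective $\Q$-algebra homomorphism between two $\Q$-algebras of the same finite dimension $2$ is necessarily an isomorphism. This completes the proof.

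The main obstacle — or rather the point requiring the most care — is verifying that $\tilde E_1$ and $\tilde E_2$ are ordinary, equivalently that $\End^0(\tilde A) \simeq M_2(K)$ rather than the quaternionic algebra $M_2$ over a definite quaternion algebra that would occur in the supersingular case; this is exactly where the hypothesis ``$p$ splits in $K$'' is used, via the classical fact (Deuring) that a CM elliptic curve by an order in $K$ has ordinary reduction at $p$ precisely when $p$ splits in $K$, together with the identification of $\tilde A$ up to isogeny with $\tilde E_1 \times \tilde E_2$ recorded just before the statement. One should also be slightly careful that $\End^0(A, i)$ is genuinely $K$ and not a proper subfield: this follows because $\End(A,i) \simeq R$ has $\End^0(A,i) \simeq K$ by definition of a Heegner point, so one does not even need the double centralizer argument on the generic fiber; but the double centralizer computation is what gives the matching upper bound $\dim_\Q \End^0(\tilde A, \tilde i) \le 2$ on the special side, and that is the inequality that makes injectivity suffice.
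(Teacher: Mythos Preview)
Your proof is correct and follows essentially the same approach as the paper: decompose $A\simeq E_1\times E_2$, use Deuring to see the $\tilde E_i$ are ordinary with $\End^0(\tilde E_i)=K$, and conclude $\End^0(A)=\End^0(\tilde A)=M_2(K)$. The only cosmetic difference is that the paper observes directly that the reduction map $\End^0(A)\to\End^0(\tilde A)$ is then an isomorphism of $8$-dimensional $\Q$-algebras carrying $i(B)$ to $\tilde i(B)$, so the centralizers correspond; you instead compute both centralizers separately as $K$ via the double centralizer theorem and match dimensions---but this amounts to the same argument.
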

\begin{proof}
We have $A\cong E_1\times E_2$, where $E_1$ are $E_2$ are isogenous elliptic curves with CM by $R$. Write $\widetilde E_1$ and $\widetilde E_2$ for their specialization modulo $p$.
Since $[\tilde A,\tilde i]\not\in(\cX_p)_{ss}$, each curve $\widetilde E_i$ is an ordinary elliptic curve over $\overline\F_p$ such that $K=\End^0(E_i)=\End^0(\tilde E_i)$. This implies that $\End^0(\widetilde A)=M_2(K)=\End^0(A)$ and consequently $\phi_P(\End^0(A,i))=\End^0(\widetilde A,\widetilde i)$.
\end{proof}

In order to describe supersingular specialization, recall that, in case $P=[A,i]\in\CM(R)$ and $p$ does not split in $K$, the endomorphism ring $\End(\tilde A,\tilde i)$ acquires structure of oriented Eichler order in $\Pic(Dp,1)$. If in addition we assume that $c$ is prime-to-$p$, by \cite[Remark 4.2]{Mol} the natural monomorphism $\phi_P:\End(A,i)\ra\End(\tilde A,\tilde i)$ can be regarded as an optimal embedding in $\CM_{Dp,1}(R)$. One can see in \cite[\S 2.1]{Mol} that the set $\CM_{Dp,1}(R)$ is equipped with an action of the group $W(D)$ of Atkin-Lehner involutions. The following theorem relates the action of $W(D)$ on $P\in\CM(R)$ with the one on $\phi_P\in\CM_{Dp,1}(R)$.

\begin{theorem}\cite[Theorem 6.1]{Mol}\label{espss}
Let $P=[A,i]\in\CM(R)$. Assume that $p$ does not split in $K$ and $p\nmid c$. Then, the map
$P\mapsto(\End(A,i)\hookrightarrow\End(\tilde A,\tilde i))$ defines an injective map
    \begin{equation}\label{redss}
    \phi_{ss}:\CM(R)\longrightarrow\CM_{Dp,1}(R),
    \end{equation}
    satisfying $\phi_{ss}(\omega_n(P))=\omega_n(\phi_{ss}(P))$, for all $\omega_n\in W(D)$.
\end{theorem}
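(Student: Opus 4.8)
\textbf{Proof proposal for Theorem \ref{espss}.}

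The plan is to reduce this statement to a careful comparison between the moduli-theoretic description of the specialization map $\Pi$ and the combinatorial description of $\CM_{Dp,1}(R)$ as conjugacy classes of optimal embeddings, exactly as in the split case treated in the earlier sections. First I would recall, from \cite[\S 2.2]{Mol}, that a Heegner point $P=[A,i]$ corresponds to an optimal embedding $\varphi(P):R\hookrightarrow\cO$ of Eichler orders, and that under the hypotheses $p\nmid c$ and $p$ non-split in $K$, the elliptic curves $E_1,E_2$ with $A\simeq E_1\times E_2$ specialize to supersingular elliptic curves. Hence $\Pi(P)=[\tilde A,\tilde i]$ lies in $(\cX_p)_{ss}$ and, by the discussion preceding the statement (together with \cite[Proposition 2.1]{Rib} and \cite[Remark 4.2]{Mol}), the natural monomorphism $\phi_P:\End(A,i)\hookrightarrow\End(\tilde A,\tilde i)$ lands in a maximal order of discriminant $Dp$ and is optimal, i.e.\ defines an element of $\CM_{Dp,1}(R)$. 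This gives the map $\phi_{ss}$; what remains is injectivity and equivariance.

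For \emph{equivariance} I would argue as follows. The action of $\omega_n\in W(D)$ on a point $P\in\CM(R)$ is defined moduli-theoretically via the usual $n$-isogeny construction on $(A,i)$; since the whole construction is functorial and commutes with reduction modulo $\dP$, the isogeny realizing $\omega_n$ on $(A,i)$ reduces to the isogeny realizing $\omega_n$ on $(\tilde A,\tilde i)$. On the level of endomorphism rings this says precisely that the diagram relating $\End(A,i)\hookrightarrow\End(\tilde A,\tilde i)$ before and after applying $\omega_n$ commutes, which is the identity $\phi_{ss}(\omega_n(P))=\omega_n(\phi_{ss}(P))$ once one recalls (from \cite[\S 2.1]{Mol}) that the $W(D)$-action on $\CM_{Dp,1}(R)$ is exactly the one induced by these isogenies of Eichler orders. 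For \emph{injectivity}, suppose $\phi_{ss}(P)=\phi_{ss}(P')$ as conjugacy classes. Both $P,P'$ lie over the same supersingular point $\tilde P$ since $\varepsilon_{ss}(\Pi(P))=\pi(\phi_{ss}(P))$ by the analogue of Remark \ref{remespsing}; then one uses that the fibre of $\Pi$ over a supersingular point is controlled by the action of $\Pic(R)\simeq\Gal(H_R/K)$, via the same Shimura-reciprocity mechanism (Theorem \ref{theoryCM}) used in Theorems \ref{espsing}, and that $\Pic(R)$ also acts faithfully on $\CM_{Dp,1}(R)$; comparing the two torsor structures forces $P=P'$.

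The main obstacle, I expect, is the equivariance claim: one must verify that the $W(D)$-action on $\CM_{Dp,1}(R)$ defined purely in terms of oriented Eichler orders (relabelling the local orientation at primes dividing $n$) genuinely agrees with the action induced on $\End(\tilde A,\tilde i)$ by the degree-$n$ isogeny on the supersingular abelian surface, and that this compatibility is preserved under the optimal-embedding normalization chosen in \cite{Mol}. This is essentially a bookkeeping check about orientations and local conductors at the primes dividing $Dp$, but it is where all the sign conventions enter; everything else follows formally from the moduli interpretation of $\cX$ over $\Z[1/D]$, the good-reduction statement \cite[Theorem 3]{Rib2}, and the structure results for $\CM_{Dp,1}(R)$ already cited. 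Accordingly, I would present the proof as: (i) set up $\phi_{ss}$ from the moduli description; (ii) deduce injectivity from the $\Pic(R)$-torsor comparison; (iii) deduce equivariance from functoriality of reduction applied to the defining isogenies of $\omega_n$, referring to \cite[Theorem 6.1]{Mol} and the analysis of \cite[\S 2.1, \S 5]{Mol} for the identification of the two actions.
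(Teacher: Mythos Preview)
The paper does not give its own proof of this statement: Theorem~\ref{espss} is quoted verbatim from \cite[Theorem~6.1]{Mol} and is used as a black box, with no argument supplied here. So there is no ``paper's proof'' to compare against; your proposal is an attempt to reconstruct the argument of the cited reference, not of the present paper.

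That said, a brief comment on the sketch itself. The set-up and the equivariance argument are along the right lines: the $W(D)$-action is defined moduli-theoretically via isogenies, reduction is functorial, and the only genuine content is the orientation bookkeeping you flag. Your injectivity argument, however, is not complete as stated. You invoke a ``torsor comparison'' between $\CM(R)$ and $\CM_{Dp,1}(R)$ via $\Pic(R)$, but the theorem you are proving asserts $W(D)$-equivariance of $\phi_{ss}$, not $\Pic(R)$-equivariance; the latter would have to be established separately (it is the analogue of the compatibility in Theorem~\ref{espsing}, and is indeed proved in \cite{Mol}, but you cannot simply assume it here). Moreover, even granting $\Pic(R)$-equivariance, the paper only records that the $\Pic(R)$-action on $\CM_{d,n}(R)$ is \emph{faithful}, not \emph{free}; faithfulness alone does not let you conclude from $[J]\ast\phi_{ss}(P)=\phi_{ss}(P)$ that $[J]$ is trivial. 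You would need either freeness of the action on the relevant orbit or a separate counting/cardinality argument (e.g.\ comparing $\#\CM(R)$ with the size of the image orbit) to close this gap.
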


\begin{remark}\label{remss}
Recall the natural forgetful projection $\pi:\CM_{Dp,1}(R)\ra\Pic(Dp,1)$ defined in Remark \ref{remespsing}. Then, as in the previous setting, the specialization $\Pi(P)\in(\cX_p)_{ss}$ is determined by:
$$\varepsilon_{ss}(\Pi(P))=\pi(\phi_{ss}(P)).$$
\end{remark}

Assume from now on that $R$ has class number $h_R=1$. For any $P=[A,i]\in\CM(R)$, we proceed to compute the intersection index $(P,\omega_m (P))_p$ of \eqref{contact} for any $m\mid D$ in case of supersingular specialization.

Write $K_p=K\otimes_\Q\Q_p$, let $K_p^{\rm{unr}}$ be the maximal
unramified extension of $K_p$ and let $R_p^{\rm{unr}}$ be its
integer ring with uniformizer $\pi$. Write
$W_n=R_p^{\rm{unr}}/\pi^n R_p^{\rm{unr}}$. If $\omega_m
(P)=[A',i']\in\CM(R)$, we deduced in \S\ref{hypShi} that
$(\omega_m (P),P)_p=\max\{n:(A,i)\simeq(A',i')\;\mbox{over }W_n\}$.
The following theorem computes $(\omega_m (P),P)_p$ explicitly.

\begin{theorem}\label{compd}
Let $P=[A,i]\in\CM(R)$ be a Heegner point and let $p\nmid D$ be a prime that does not split
in $K$ and does not divide the conductor of $R$. Let
$\Lambda=\End(\tilde A,\tilde i)\in\Pic(Dp,1)$ and write
$\Lambda^0=\Lambda^0_++\Lambda^0_-$, where
$\Lambda_+^0=\phi_{ss}(P)(K)$ and $\Lambda_-^0$ is its quaternionic
complement. Let $\Lambda\simeq R\oplus eR$ be the decomposition of
$\Lambda$ provided by its free right $R$-module structure via
$\phi_{ss}(P)$. For any $\lambda\in\Lambda^0$, write
$\lambda=\lambda_++\lambda_-$, where $\lambda_+\in\Lambda^0_+$ and
$\lambda_-\in\Lambda^0_-$. Finally, for any $\lambda\in\Lambda$,
write $\lambda=\lambda^++e\lambda^-$, where $\lambda^+,\lambda^-\in
R$. Then, the integer $(\omega_m (P),P)_p$ is given by:
\begin{equation}\label{ordcont}
(\omega_m(P),P)_p=\max\left\{\frac{\ord_p(\Norm(\lambda^-))}{1-\left(\frac{d}{p}\right)}+1\;:\;\lambda\in\Lambda,\;\Norm(\lambda)=m\right\}
\end{equation}
where $d$ is the discriminant of $K$ and $\left(\frac{d}{p}\right)$
is the usual Legendre symbol. Moreover, if $\lambda\in\Lambda$ is
such that $\Norm(\lambda)=m$, the following equality holds:
\begin{equation}\label{dioph}
-dc^2m=-dc^2\Norm(\lambda_+)+\Norm(\lambda^-)D\cdot p.
\end{equation}
\end{theorem}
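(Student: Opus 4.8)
The plan is to translate the congruence condition ``$(A,i)\simeq(A',i')$ over $W_n$'' into a lifting/deformation statement about optimal embeddings, and then to compute with quaternion orders. Recall that by Theorem \ref{espss} the point $P$ (resp. $\omega_m(P)$) corresponds to an optimal embedding $\phi_{ss}(P)$ (resp. $\phi_{ss}(\omega_m(P))=\omega_m(\phi_{ss}(P))$) into the maximal oriented Eichler order $\Lambda=\End(\tilde A,\tilde i)\in\Pic(Dp,1)$, and since $h_R=1$ both $(A,i)$ and $(A',i')$ specialize to the \emph{same} point $\tilde P=[\tilde A,\tilde i]\in(\cX_p)_{ss}$. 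An isomorphism $(A,i)\simeq(A',i')$ over $W_n$ is the same as a unit $u$ in $\End(\tilde A,\tilde i)=\Lambda$ which conjugates one embedding into the other modulo $\pi^n$; by the Gross--Zagier theory developed in \cite{GrossZagier} (and its quaternionic extension in \cite{Mol}) the deformation datum of $\tilde A$ to $W_n$ is governed by the action of $\Lambda$ on the Dieudonn\'e module / the $p$-divisible group, so the maximal such $n$ is detected by how far an element $\lambda\in\Lambda$ realizing the isogeny $\omega_m$ (which has reduced norm $m$) lies from the subring $\phi_{ss}(P)(R)$ — precisely its ``off-diagonal'' component $\lambda^-$ in the decomposition $\Lambda=R\oplus eR$.

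Concretely, I would first fix the local picture at $p$: since $p\nmid D$ and $p$ is non-split in $K$, the completion $\Lambda\otimes\Z_p$ is a maximal order in the (split or division) quaternion algebra $B\otimes\Q_p$, and the embedding $\phi_{ss}(P)\otimes\Q_p$ embeds $K_p=K\otimes\Q_p$, which is either the unramified quadratic extension of $\Q_p$ (the inert case, $(\frac{d}{p})=-1$) or a ramified quadratic extension (the ramified case, $(\frac{d}{p})=0$). In each case $R_p^{\rm unr}\otimes_{\Z_p}\Lambda_p$ is an Eichler-type order in $M_2(K_p^{\rm unr})$ and the two embeddings $P$, $\omega_m(P)$ become conjugate over $W_n$ exactly when the element $\lambda$ of reduced norm $m$ that intertwines them can be written as $\lambda_+ + \lambda_-$ with $\lambda_-\equiv 0\bmod\pi^{?}$; tracking the valuation of the uniformizer (recalling $\pi$ has valuation $1$ in the inert case but the ramified prime of $R_p^{\rm unr}$ lies over $p$ with ramification index governing the factor $1-(\frac{d}{p})$) yields the exponent $\frac{\ord_p(\Norm(\lambda^-))}{1-(\frac{d}{p})}+1$, the $+1$ coming from the fact that the embeddings always agree to first order (they share the same reduction $\tilde P$). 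Taking the max over all $\lambda\in\Lambda$ with $\Norm(\lambda)=m$ — equivalently over all ways the isogeny $\omega_m$ can be realized inside $\Lambda$ — gives \eqref{ordcont}.

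For the Diophantine identity \eqref{dioph}, I would argue purely algebraically inside the rank-one $K$-space $\Lambda^0_-$: writing $\lambda=\lambda_++\lambda_-$ and using that $\phi_{ss}(P)(K)=\Lambda^0_+$ is orthogonal to $\Lambda^0_-$ under the reduced norm form, we get $\Norm(\lambda)=\Norm(\lambda_+)+\Norm(\lambda_-)$, so $m=\Norm(\lambda_+)+\Norm(\lambda_-)$. Now express $\lambda_-=e\lambda^-$ with $e$ the fixed generator of the $R$-module complement and $\lambda^-\in R$; since $\Lambda$ has discriminant $Dp$ and $e$ generates the ``anti-diagonal'' part, the reduced norm of $e$ carries exactly the factor $Dp$ up to the discriminant $-dc^2$ of $R$ (this is the standard computation of the norm form of an Eichler order with a distinguished optimal embedding, cf. the presentations $\cO_{d,n}=R\oplus eI$ used in \S\ref{compudesc}). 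Substituting and clearing denominators by $-dc^2$ yields $-dc^2 m = -dc^2\Norm(\lambda_+) + \Norm(\lambda^-)Dp$, which is \eqref{dioph}.

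The main obstacle I anticipate is making the ``deformation agrees to order $n$'' $\Longleftrightarrow$ ``$\lambda^-$ vanishes to the right $\pi$-adic order'' equivalence genuinely precise, including the correct normalization of the exponent in the inert versus ramified case — this requires carefully invoking the Gross--Zagier computation of $\mathrm{Hom}$ groups of deformations over Artinian rings \cite{GrossZagier} (its quaternionic analogue is what \cite{Mol} supplies), rather than the purely formal bimodule manipulations used for the $p\mid D$ case in \S\ref{compudesc}. Once that dictionary is set up, the remaining steps are bookkeeping with quaternion orders and norm forms; in particular \eqref{dioph} is then immediate.
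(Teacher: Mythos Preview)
Your overall strategy—reduce to Gross--Zagier's computation of endomorphism rings over Artinian thickenings, then translate the isomorphism condition over $W_n$ into a valuation condition on the off-diagonal part of an element $\lambda\in\Lambda$ with $\Norm(\lambda)=m$—is exactly the paper's approach, and your derivation of \eqref{dioph} via $\Norm(\lambda)=\Norm(\lambda_+)+\Norm(\lambda_-)$ together with the discriminant computation $\disc(\Lambda)=e_-^2c^2d=Dp$ matches the paper's.

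There is, however, a genuine imprecision in your framing of the deformation step that would cause trouble if carried through literally. You write that an isomorphism $(A,i)\simeq(A',i')$ over $W_n$ amounts to ``a unit $u\in\Lambda$ which conjugates one embedding into the other modulo $\pi^n$''. This is not the correct picture: $(A,i)$ and $(A',i')$ are two QM abelian surfaces (not two embeddings into a common ambient object), and the relevant datum is the right $\Lambda^n$-module $I_m^n:=\Hom_{W_n}((A,i),(A',i'))$. The paper's argument is that $(A,i)\simeq(A',i')$ over $W_n$ iff $I_m^n$ is \emph{principal}; moreover $I_m^1\subset\Lambda$ is identified with the unique two-sided ideal of reduced norm $m$, so a generator $\lambda$ has $\Norm(\lambda)=m$, not norm $1$. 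You do arrive at the correct object $\lambda$ later in your sketch, but the ``unit conjugating embeddings'' language should be replaced by this Hom-module/principality formulation.

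The other place where the paper is more concrete than your outline is the passage from the elliptic-curve level to the abelian-surface level. Since $h_R=1$ one has $A\simeq E^2$ for the unique CM elliptic curve $E$; the paper first invokes Gross--Zagier's formula
\[
\cS^n=\End_{W_n}(\cE)=\{\alpha\in\cS : d\cdot\Norm(\alpha_-)\equiv 0 \bmod p\cdot\Norm(\dP)^{n-1}\}
\]
and then lifts this to $\Lambda^n=\End_{W_n}(A,i)$ via the bimodule $\cM_P^n=\cO\otimes_R\cS^n$ (this is the ``quaternionic analogue'' you allude to, made explicit through \cite[Theorem~4.2]{Mol}). The inert/ramified dichotomy in the exponent then drops out directly from $\Norm(\dP)=p$ versus $\Norm(\dP)=p^2$, with no separate local analysis of $\Lambda\otimes\Z_p$ required.
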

\begin{proof}
Since $h_R=1$, there is a single isomorphism class of elliptic curves $E$ with CM by $R$, and $E$ has supersingular specialization modulo $p$. Due to the fact that $E$ has potentially good reduction, after extending $K_p^{\rm{unr}}$ if necessary, we can choose a smooth model $\E$ of $E$ over $R_p^{\rm{unr}}$.

Denote by $\cS^n=\End_{W_n}(\E)$. In particular, $\cS^1=\cS=\End(\tilde E)$ shall be regarded as an element of $\Pic(p,1)$. The monomorphism of algebras $\phi:K\simeq\End^0(E)\hookrightarrow\End^0(\tilde E)$ yields a decomposition
$$\cS^0=\cS^0_+\oplus\cS^0_-,$$
where $\cS^0_+=\phi(K)$ and $\cS^0_-$ is its quaternionic complement. Then by the work of Gross-Zagier \cite[Proposition 3.7.3]{GrossZagier},
$$\cS^n=\End_{W_n}(\E)=\{\alpha\in\cS\;:\;d\cdot \Norm(\alpha_-)\equiv 0\mod p\cdot\Norm(\dP)^{n-1}\},$$
where $\dP\subset R$ is the prime ideal lying above $p$.

By \cite[\S 2.2]{Mol}, the abelian surface $A$ is isomorphic to $E^2$. Hence, in order to specialize $(A,i)$ over $W_n$ as in the above setting, we must consider a smooth model $\mathcal{A}$ of $A$ over $R_p^{\rm{unr}}$ and reduce modulo $\pi^n$. Write
$$\Lambda^n:=\End_{W_n}(A,i)=\{\lambda\in\End_{W_n}(\mathcal{A})\;:\;i(\alpha)\lambda=\lambda i(\alpha)\;\forall \alpha\in \cO\}.$$
We claim that:
\[
\Lambda^n=\{\alpha\in\Lambda\;:\;d\cdot \Norm(\alpha_-)\equiv 0\mod p\cdot\Norm(\dP)^{n-1}\}.
\]
Indeed, since $A\simeq E^2$, we have $\End_{W_n}(\mathcal{A})\simeq M_2(\cS^n)$. Moreover, due to the fact that $\cO\simeq R\times R$ as a right $R$-module via $\varphi(P)$, the $(\cO,\cS^n)$-bimodule $\cM_{P}^n=\cO\otimes_{R}\cS^n$ is isomorphic to $\cS^n\times\cS^n$ as a right $\cS^n$-order. By a similar argument as in \cite[Theorem 4.2]{Mol}, we obtain that $\Lambda^n=\End_\cO^{\cS^n}(\cM_{P}^n)$.

For any prime $q\neq p$ $\cS_q=\cS^n_q$. Hence we deduce that
$\Lambda^n_q=\Lambda^1_q=\Lambda_q$. On the other hand,
$\Lambda_p^n$ corresponds to matrices in $M_2(\cS_p^n)$ that commute
with $\cO_p=M_2(\Z_p)$, thus $\Lambda_p^n\simeq \cS_p^n$. Applying
the description of $\cS^n$ above, the desired claim follows.

Let us consider $\omega_m (P)=[A',i']\in\CM(R)$. By \cite[Corollary 2]{M-G}, the abelian surfaces with QM $(A,i)$ and $(A',i')$ are isogenous.
By this we mean that there exists an isogeny $\lambda:A\ra A'$ making, for all $\alpha\in\cO$, the following diagram commutative:
\[
\xymatrix{
A \ar[d]_{i(\alpha)}\ar[r]^\lambda & A'\ar[d]^{i'(\alpha)}\\
A  \ar[r]^\lambda & A'
}
\]

Write $I_m^n=\Hom_{W_n}((A,i),(A',i'))$ for the set of isogenies between $(A,i)/W_n$ and $(A',i')/W_n$. Then it is easy to check that $I_m^n=\Hom_\cO^{\cS_n}(\cM_{P}^n,\cM_{\omega_m(P)}^n)$ and, consequently, $I_m^n$ is a right $\Lambda^n$-module. Clearly, $(A,i)\simeq (A',i')$ over $W_n$ if and only if $I_m^n$ is principal.

By \cite[Remark 4.10]{Mol}, $I_m^1=\Hom_{\F}((A,i),(A',i'))$ is the
single (two-sided) ideal of $\Lambda$ of norm $m$. Hence, if $(A,i)$
and $(A',i')$ were isomorphic over $\F$, $I_m^1$ would be principal,
i.e. it would be generated by an element $\lambda\in\Lambda$ of
norm $m$.

Since $p\nmid D$, under the embedding
$\Lambda^n\hookrightarrow\Lambda$, the ideal $I_m^n$ is the only
ideal in $\Lambda^n$ lying above $I_m^1$. This means that $I_m^n$ is
principal if and only if there exists an element
$\lambda\in\Lambda_n$ that generates $I_m^1$, or equivalently,
$\lambda\in\Lambda$, $\Norm(\lambda)=m$ and $d\Norm(\lambda_-)\equiv
0\mod p\cdot\Norm(\dP)^{n-1}$. Computing the norm $\Norm(\dP)$ in
both cases $p\mid d$ and $p\nmid d$ we conclude that :
\begin{equation}\label{eqaux}
(\omega_m(P),P)_p=\left\{\begin{array}{lc}
\max\{\ord_p(d\cdot \Norm(\lambda_-))\;:\;\lambda\in\Lambda,\;\Norm(\lambda)=m\} & p\mid d\\
\max\{\frac{1}{2}(\ord_p(\Norm(\lambda_-))+1)\;:\;\lambda\in\Lambda,\;\Norm(\lambda)=m\}
& p\nmid d
\end{array}\right.
\end{equation}

Finally, the decomposition $\Lambda\simeq R\oplus eR$, where $e=e_++e_-$, allows us to compute the reduced discriminant of $\Lambda$ in terms of $R$ and $e$. Indeed we obtain that
$\disc(\Lambda)=e_-^2c^2 d$. Since $\Lambda\in\Pic(Dp,1)$ we deduce $Dp=e_-^2c^2 d$ (Notice that $d<0$ and $e_-^2<0$ since $\Lambda^0=\left(\frac{d,e_-^2}{\Q}\right)$ is definite).

For any $\lambda\in\Lambda$, we have that
$\lambda_+=\lambda^++\lambda^-\cdot e_+$ and $\lambda_-=e_-\cdot
\lambda^-$. If in addition $\Norm(\lambda)=m$, then
$\Norm(\lambda)=\Norm(\lambda_+)+\Norm(\lambda_-)=m$, where
$\Norm(\lambda_-)=-e_-^2\cdot\Norm(\lambda^-)=-\frac{\Norm(\lambda^-)Dp}{c^2d}$. Thus,
\[
-dc^2m=-dc^2\Norm(\lambda_+)+\Norm(\lambda^-)D p.
\]

Since by hypothesis $\ord_p(c)=0$, we have that $$\ord_p(d\cdot
\Norm(\lambda_-))=\ord_p(dc^2\cdot \Norm(\lambda_-))=\ord_p(-pD
\cdot\Norm(\lambda^-))=\ord_p(\Norm(\lambda^-))+1.$$
Finally, one obtains the desired formula from \eqref{eqaux}.
\end{proof}

\begin{remark}\label{remfinnum}
Notice that the integers $-dc^2m,-dc^2\Norm(\lambda_+)$ and
$\Norm(\lambda_-)D p$ are all positive. Hence, given $D$, $m$ and
$d$, equation \eqref{dioph} gives a finite number of possible $p$
and $\Norm(\lambda_-)$. Moreover, the valuation of such
$\Norm(\lambda_-)$ at $p$ provides the intersection index
$(\omega_m(P),P)_p$.
\end{remark}

\section{Algorithm to compute equations}\label{Algorithm}

Let $X=X_0^D/\Q$ be an hyperelliptic Shimura curve of genus $g\geq
3$ and let $\cX/\Z$ be Morita's integral model of $X$. Assume that we can
obtain a Weierstrass model $\cW$ of $X$ by blowing down certain
exceptional divisors of some special fibers of $\cX$. We proceed to describe an
algorithm to compute an hyperelliptic equation for $\cW$ over $\Z[1/2]$:
\[
\cW:y^2=R(x), \quad R(x)\in\Z[x],\quad\deg(R)= 2g+2.
\]

\subsection*{Step 1: Reduction of the set of Weierstrass points at bad primes}


Let $\CM(R_i)$ be a set of Heegner points in $\WP(X)$.
By \cite[Theorem 5.11 and Theorem 3.1]{Vig},
$\CM_{D,1}(R_i)$ is a $\Pic(R_i)$-orbit. Thus by
Theorem \ref{theoryCM}, the set $\CM(R_i)$ is a Galois
orbit. The decomposition $\WP(\cW)=\bigsqcup_j\CM(R_j)$ gives rise to a
factorization $R(x)=\prod_j p_{R_j}(x)$, where each $p_{R_i}\in\Z[x]$ is
irreducible, $\deg(p_{R_i})=\#\Pic(R_i)$ and roots of $p_{R_i}$
correspond to Weierstrass points $\CM(R_i)$. Moreover, the splitting field
of each $p_{R_i}$ coincides with the field of definition of any
$P\in\CM(R_i)$.

Fix $P\in\CM(R_i)$ and
let $p\mid D$ be a prime. Since $R_i^0=\Q(\sqrt{-D})$, Theorem \ref{espsing} asserts that its specialization $\Pi(P)$ lies in the singular locus $(\cX_p)_{\rm{sing}}$.
By Remark \ref{remespsing}, we are able to compute $\Pi(P)$ through the map $\phi_s$ of Theorem \ref{espsing}. Indeed, upon the correspondence $\varepsilon_s$ of \eqref{corrsing}:
\[
\varepsilon_s(\Pi(P))=\pi(\phi_s(P))\in\Pic(D/p,p).
\]
Finally, in order to compute $\phi_s$ we exploit the theory of bimodules and the algebraic description of $\phi_s$. In fact, Theorem \ref{compphi} gives $\phi_s(P)$ explicitly.

Once we have obtained $\varepsilon_s(\Pi(P))$ for a fixed $P\in\CM(R_i)$, we proceed to obtain the specialization of all $Q\in\CM(R_i)$ using the fact that $\CM(R_i)$ is a Galois orbit.
By Theorem \ref{espsing},
\begin{equation}\label{redhegpt}
\varepsilon_s(\Pi(P^{\Phi_{R_i}([J])}))=\pi(\phi_s(P^{\Phi_{R_i}([J])}))=\pi([J]\ast\phi_s(P)).
\end{equation}
Moreover, since we have an explicit description of $\phi_s(P)$ and the $\Pic(R)$-action on $\phi_s(P)$ is easily computable with \emph{MAGMA} \cite{Mag}, we obtain the specialization of all points in $\CM(R_i)$.

Notice that this recipe provides $\varepsilon_s(\Pi(Q))\in\Pic(D/p,Np)$ for all $Q\in\bigsqcup\CM(R_i)$ which, by Lemma \ref{espord}, describes its specialization and its thickness in $\cX_p$. In order to obtain its thickness in $\cW_p$ we apply formula \eqref{thickblow_d}.

\subsection*{Step 2: Choice of the points at infinity}\label{leadT}

As pointed out in \S \ref{hypShi}, we may choose an order $R_\infty$ with class number $h_{R_\infty}=1$ in an imaginary quadratic field $K_\infty$ of discriminant prime-to-$D$, such that $\CM(R_\infty)\neq\emptyset$. Notice that we can always assume that $R_\infty$ is maximal.
Fix $P_\infty=[A_\infty,i_\infty]\in\CM(R_\infty)$ and assume that $\{P_\infty,\omega_D(P_\infty)\}$ are the points at infinity. This fixes an affine open set of $\cW$ defined, over $\Z[1/2]$, by the equation $y^2=R(x)=\prod p_{R_i}(x)$, where $\deg(R(x))=2g+2$ and the factorization $R(x)=\prod p_{R_i}(x)$ is attached to the decomposition $\WP(\cW)=\bigsqcup_i \CM(R_i)$. Let $a_R$ and $a_{R_i}$ be the leading coefficients of $R(x)$ and $p_{R_i}(x)$ respectively, $a_R=\prod_i a_{R_i}$. Since $\Q(\sqrt{a_R})=K_\infty$, we control the sign of $a_R$ (which is negative since $K_\infty$ is imaginary) and its absolute value modulo squares.

In order to determine $a_R$, recall that $(a_R,D)=1$ and primes dividing $a_R$ correspond to places where both points at infinity specialize to the same $\F_p$-rational Weierstrass point. Thus, $\Pi(P_\infty)=\Pi(\omega_D (P_\infty))=\Pi(P)=[\tilde A,\tilde i]$ for some $P=[A,i]\in\WP(\cW)$.
Suppose that $\Pi(P_\infty)=\Pi(P)\not\in(\cX_p)_{ss}$. Then, by Proposition \ref{Ordred}, $K\simeq\End^0(A,i)\simeq\End^0(\tilde A,\tilde i)\simeq\End^0(A_\infty,i_\infty)\simeq K_\infty$, which is impossible since discriminant of $K_\infty$ is prime-to-$D$. Hence, for all primes $p\mid a_R$, $\Pi(P_\infty)=\Pi(\omega_D(P_\infty))\in(\cX_p)_{ss}$; equivalently, $p$ does not split in both $K$ and $K_\infty$.

Assume that $p\mid a_R$. By relation \eqref{contact}, the valuation
of $a_R$ at $p$ is given by $$\nu_p(a_R)=\left(1-\left(\frac{d}{p}\right)\right)(P_\infty,\omega_D
(P_\infty))_p.$$
Since $\Pi(P_\infty)\in(\cX_p)_{ss}$, we deduce from
Theorem \ref{compd} that:
\[
\nu_p(a_R)=
\max\left\{\ord_p(\Norm(\lambda^-))+1-\left(\frac{d}{p}\right)\;:\;\lambda\in\varepsilon(\Pi(P_\infty)),\;\Norm(\gamma)=D\right\}
\]
where $d$ is the discriminant of $K_\infty$. Moreover, for any
$\lambda\in\Lambda$ such that $\Norm(\lambda)=D$, the following
relation holds:
\[
-dD=-d\Norm(\lambda_+)+\Norm(\lambda^-)D p.
\]
This gives a finite number of possible $p$ and $\Norm(\lambda^-)$
for given $D$ and $d=\disc(K_\infty)$. Consequently, we have a
finite number of possible $\nu_p(a_R)$.

Once we have the set of possible $p$ dividing $a_R$, in order to determine which $a_{R_i}$
is divisible by $p$ recall the maps $\phi_{ss}$ of \eqref{redss}
attached to supersingular specialization. By Remark \ref{remss},
$p\neq 2$ divides $a_{R_i}$ if and only if
$\varepsilon(\Pi(P_\infty))=\pi(\phi_{ss}(P_\infty))\in\pi(\phi_{ss}(\CM(R_i)))$. Equivalently, $R_i$ is embedded in $\varepsilon(\Pi(P_\infty))\in\Pic(Dp,1)$ optimally.
There exists no pair of orders $R_i\neq R_j$ embedding optimally
in the same $\Lambda\in\Pic(Dp,1)$ since $\phi_{ss}$ is injective and two
Weierstrass points can not have the same specialization whenever $p$ is a prime of good reduction.

We are able to compute
$\varepsilon(\Pi(P_\infty))=R_\infty\oplus eR_\infty$, and
consequently we shall check whether $R_i$ is embedded optimally in
it.

In case $p=2$, we control the valuation $\nu_2(a_R)$ but we do not control the 2-valuation of each $a_{R_i}$ if $\nu_2(a_R)\neq 0$. In any case we have an upper bound; $\nu_2(a_{R_i})\leq\nu_2(a_{R})$.

\subsection*{Step 3: Discriminants, Resultants and Fields of definition}

For any $P\in\WP(\cW)$, write $\gamma_P$ for the root of $R(x)$ attached to $P$. Since we control the specialization of every point in $\WP(\cW)$ and we know how to compute its thickness, Theorem \ref{EspSing} yields the valuations $\nu_p(\gamma_{P}-\gamma_{P'})$ for every $P,P'\in\WP(\cW)$ and every $p\neq 2$.
This provides the discriminants  $\disc(p_{R_i})$ and the resultants $\Res(p_{R_i},p_{R_j})$ up to a power-of-$2$ factor,  namely
\begin{equation}\label{discarrels}
\nu_p(\disc(p_{R_i}))=\sum_{P,P'\in\CM(R_i)}2\cdot\nu_p(\gamma_{P}-\gamma_{P'}),\quad\nu_p(\Res(p_{R_i},p_{R_j}))
=\sum_{\substack{P\in\CM(R_i)\\
Q\in\CM(R_j)
}}\nu_p(\gamma_{P}-\gamma_{Q}).
\end{equation}
If in addition we assume good reduction at $2$, by \eqref{Disc} we have that
\begin{equation}\label{pow2}
4(g+1)=\nu_2(\disc(R))=\sum_i\nu_2(\disc(p_{R_i}))+\sum_{i,j}\nu_2(\Res(p_{R_j},p_{R_j})^2).
\end{equation}
In general we obtain a finite number of possible powers of $2$ dividing $\disc(p_{R_i})$ and $\Res(p_{R_i},p_{R_j})$.

By Theorem \ref{theoryCM}, points in $\CM(R_i)$ are defined over a subfield of the ring class field $H_{R_i}$ of $R_i$. We compute such field using the following theorem:
\begin{theorem}\cite[Theorem 5.12]{GoRo1}\label{subRCF}
Let $Q\in\CM(R)\subset X_0^D(H_R)$ for some order $R$ in the imaginary
quadratic field $K=\Q(\sqrt{-D})$. Fix an embedding $H_R\subset\C$ and denote by $c$ the complex conjugation. Then $[H_R:\Q(Q)]=2$ and
$\Q(Q)\subset H_R$ is the subfield fixed by
$\sigma=c\cdot\Phi_{R}([\mathfrak{a}])\in\Gal(H_R/\Q)$ for some
ideal $\mathfrak{a}$ such that
$B\simeq\left(\frac{-D,\Norm_{K/\Q}(\mathfrak{a})}{\Q}\right)$.
\end{theorem}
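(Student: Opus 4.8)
The plan is to split the assertion into a \emph{formal part} --- that $[H_R:\Q(Q)]=2$ and that $\Q(Q)$ is the fixed field of an involution of the shape $\sigma=c\cdot\Phi_R([\mathfrak a])$ --- which follows from Shimura's reciprocity law (Theorem \ref{theoryCM}) together with the generalized dihedral structure of $\Gal(H_R/\Q)$, and an \emph{arithmetic part} --- that $\mathfrak a$ may be taken with $B\simeq\left(\frac{-D,\Norm_{K/\Q}(\mathfrak a)}{\Q}\right)$ --- which uses the moduli interpretation of $X_0^D$ and where I expect the real difficulty to lie.

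For the formal part I would argue as follows. We already know $K(Q)=H_R$, and by \eqref{Phi} the set $\CM(R)$ is a $\Pic(R)$-torsor on which $\Gal(H_R/K)$ acts through the Artin map $\Phi_R$. Since $X_0^D$ is defined over $\Q$ and the condition $\End(A,i)\simeq R$ is preserved by $\Gal(\overline\Q/\Q)$ (Galois acts on the CM order through $\Aut(K/\Q)$, which fixes the order $R$), complex conjugation $c$ carries $Q$ to another point of $\CM(R)$, so there is a unique class $[\mathfrak a]\in\Pic(R)$ with $c(Q)=Q^{\Phi_R([\mathfrak a])}$. Set $\sigma=c\cdot\Phi_R([\mathfrak a])\in\Gal(H_R/\Q)$. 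Then $\sigma(Q)=c(c(Q))=Q$; also $\sigma\notin\Gal(H_R/K)$ because $\sigma|_K$ is the nontrivial automorphism of $K/\Q$; and $\sigma^2=\id$, because $c$ acts on $\Gal(H_R/K)\simeq\Pic(R)$ by inversion --- the standard dihedral description of $\Gal(H_R/\Q)$ for a ring class field, which itself follows from the Galois-equivariance of $\Phi_R$ together with $\mathfrak a\,\overline{\mathfrak a}=(\Norm_{K/\Q}\mathfrak a)$. Since $K(Q)=H_R$ forces the stabilizer of $Q$ inside $\Gal(H_R/K)$ to be trivial and $[\Gal(H_R/\Q):\Gal(H_R/K)]=2$, it follows that the full stabilizer of $Q$ in $\Gal(H_R/\Q)$ equals $\langle\sigma\rangle$; hence $[H_R:\Q(Q)]=2$ and $\Q(Q)=H_R^{\langle\sigma\rangle}$. (That $X_0^D$ has no real points is visible here: it forces $c(Q)\neq Q$, i.e.\ $[\mathfrak a]\neq 1$, so that $\Q(Q)$ is not the maximal totally real subfield of $H_R$.)

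For the arithmetic part I would pass to the moduli description. Write $Q=[A,i]$, let $\varphi(Q)\colon K\hookrightarrow B$ be the optimal embedding of \eqref{Phi} (so $\varphi(Q)(R)=\varphi(Q)(K)\cap\cO$), and fix a quaternionic complement $j$ of $\varphi(Q)(K)$ as in Definition \ref{quatercomp}, so that $j^2=m\in\Q^\times$ and $B\simeq\left(\frac{-D,m}{\Q}\right)$; note that $m>0$, since $B$ is indefinite while $K$ is imaginary, forcing $\left(\frac{-D,m}{\R}\right)$ to be split. The heart of the matter is to determine $c(Q)$, equivalently the class $[\mathfrak a]$, from Shimura's description of the canonical model. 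Unlike in the modular case, complex conjugation on $X_0^D(\C)=\Gamma_0^D\backslash\mathcal H$ is \emph{not} induced by a single element acting through $z\mapsto w\bar z$ with $w\in B^\times$: such a $w$ would normalize $\cO$, and its existence would exhibit real points on $X_0^D$, of which there are none. Instead one must read off the action of $c$ on the $\Pic(R)$-torsor $\CM(R)$ from the reciprocity law underlying Theorem \ref{theoryCM}; this action is an involution, anti-equivariant for the $\Pic(R)$-action (reflecting that $c$ inverts $\Gal(H_R/K)$), and its type is precisely $[\mathfrak a]$. Unwinding the canonical model --- concretely, by tracking how the $R$-module decomposition $\cO\simeq R\oplus eI$ attached to $\varphi(Q)$ transforms under conjugation, together with the quaternionic complement $j$ --- shows that $[\mathfrak a]$ is the class of the ``complement ideal'' of $\varphi(Q)$, whose norm $\Norm_{K/\Q}(\mathfrak a)$ lies in the coset $m\cdot\Norm_{K/\Q}(K^\times)$; since $\left(\frac{-D,x}{\Q}\right)$ depends on $x$ only modulo $\Norm_{K/\Q}(K^\times)$, this gives $B\simeq\left(\frac{-D,m}{\Q}\right)\simeq\left(\frac{-D,\Norm_{K/\Q}(\mathfrak a)}{\Q}\right)$, as wanted.

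The hard part will be this last step: making precise how complex conjugation acts on the canonical model of $X_0^D$ and on its CM points, and tracking the resulting ideal class through the quaternionic complement. The delicate ingredients are the incompatibility of $z\mapsto\bar z$ with the $\Q$-structure (so one cannot reduce to a plain matrix computation) and the norm bookkeeping that places $\Norm_{K/\Q}(\mathfrak a)$ in the correct coset of $\Norm_{K/\Q}(K^\times)$. An alternative and probably cleaner route --- the one I would actually carry out --- is to work with the isogeny $\overline A\to A$ intertwining $i$ with $\overline i$ that arises from the two complementary CM structures on the abelian surface: its degree is $\Norm_{K/\Q}(\mathfrak a)$, and the content of the theorem is exactly that this degree represents $B$ over $\Q$, everything else being a formal consequence of Shimura reciprocity and the structure of $\Gal(H_R/\Q)$.
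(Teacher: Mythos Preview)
The paper does not supply its own proof of this theorem: it is stated with the citation \cite[Theorem 5.12]{GoRo1} and immediately followed by Remark~\ref{remalgquat}, with no proof environment in between. So there is nothing in the present paper to compare your argument against; the result is imported wholesale from Gonz\'alez--Rotger.

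That said, your outline is reasonable and in the spirit of how such results are proved. The formal part --- that complex conjugation permutes $\CM(R)$, that the stabilizer of $Q$ in $\Gal(H_R/\Q)$ is generated by an involution of the form $c\cdot\Phi_R([\mathfrak a])$, and hence that $[H_R:\Q(Q)]=2$ --- is essentially complete as written, relying only on Theorem~\ref{theoryCM} and the dihedral structure of $\Gal(H_R/\Q)$. The arithmetic part, however, remains a plan rather than a proof: the phrase ``unwinding the canonical model \ldots\ shows that $[\mathfrak a]$ is the class of the complement ideal'' is exactly the nontrivial computation carried out in \cite{GoRo1}, and your sketch does not actually perform it. If you want a self-contained argument you would need to make precise the action of $c$ on $\CM_{D,1}(R)$ (via Shimura's description of the canonical model, not merely via $z\mapsto\bar z$ on $\mathcal H$) and then verify the norm identity; your alternative route through the isogeny $\overline A\to A$ is indeed closer to how this is done in the literature.
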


\begin{remark}\label{remalgquat}
One can see in \cite[\S 5]{GoRo1} that the class
$[\mathfrak{a}]\in\Pic(R)$ does depend on $Q$. Assume that
$[\mathfrak{a}]=[\mathfrak{c}]^2[\mathfrak{b}]$. Then, the Heegner
point $P=\Phi_R([\mathfrak{c}])(Q)\in\CM(R)$ is fixed by
$c\cdot\Phi_{R}([\mathfrak{b}])$, indeed
\[
c\cdot\Phi_R([\mathfrak{b}])(P)=c\cdot\Phi_R([\mathfrak{b}][\mathfrak{c}])(Q)=c\cdot\Phi_R([\mathfrak{c}]^{-1}[\mathfrak{a}])(Q)=\Phi_R([\mathfrak{c}])\cdot
c\cdot\Phi_R([\mathfrak{a}])(Q)=P
\]
Thus, for any $\mathfrak{b}$ verifying that $[\mathfrak{a}]\cdot[\mathfrak{b}]^{-1}\in\Pic(R)^2$, there exists some
$P\in\CM(R)$ such that $\Q(P)$ is the fixed field by
$c\cdot\Phi_{R}([\mathfrak{b}])$.

Let $M_{R_i}$ be the isomorphism class of the field $\Q(P)$, for any $P\in\CM(R_i)$. Then $M_{R_i}$ is characterized by the class $\{\mathfrak{a}\}\in\Pic(R)/\Pic(R)^2$. It is clear that any ideal $\mathfrak{b}$ in
$\{\mathfrak{a}\}$ satisfies the
isomorphism
$B\simeq\left(\frac{-D,\Norm_{K/\Q}(\mathfrak{b})}{\Q}\right)$.
In general, the converse is not true, but if $[H_R:  H]$ is odd, where $H$ is the Hilbert class field of $K$, then $\{\mathfrak{a}\} \in \Pic(R)/\Pic(R)^2$ is uniquely determined by such isomorphism (see
 \cite[Remark 5.11]{GoRo1}). In our particular setting, the conductor of $R$ is $2$ and, thus, $[H_R:  H]$ is either $1$ or $3$.
\end{remark}

This results yields the field $M_{R_i}$ attached to $\CM(R_i)$. Recall that this field coincides
with the splitting field of $p_{R_i}(x)$.

\subsection*{Step 4: Computing equations}

Since we have computed the leading coefficients of each $p_{R_i}$, we are able to convert them into monic polynomials.
Given $p_{R_i}(x)\in\Z[x]$ of discriminant $d$, leading coefficient $a_{R_i}$ and degree $n$, the polynomial $q_{R_i}(x)=a_{R_i}^{n-1}(p_{R_i}(x/a_{R_i}))$ turns out to be monic with integer coefficients and discriminant $a_{R_i}^{2n-2}d$. It defines the same field as $p_{R_i}(x)$.

Let $\delta_{R_i}$ be any root of $q_{R_i}$. Since $q_{R_i}\in\Z[x]$ is monic, the root $\delta_{R_i}$ belongs to $\cO_{M_{R_i}}$, the ring of integers of $M_{R_i}$.
Moreover, $\disc(q_{R_i})$ provides the $\Z$-index $[\cO_{M_{R_i}}:\Z[\delta_{R_i}]]$.
Through the instruction \emph{IndexFormEquation} of \emph{MAGMA} \cite{Mag} we obtain all possible $\delta_{R_i}$ of given index, up to sign and translations by integers. Thus, we are able compute all possible polynomials $q_{R_i}$ (and consequently $p_{R_i}$) up to transformations of the form $p(x)\rightarrow p(\pm x+r)$ with $r\in\Z$.
The polynomials $p_{R_i}$ can be determined with no ambiguity by means of the resultants $R_{i,j}=\Res(p_{R_i},p_{R_j})$.
Namely, given $p_{R_i}(x+r_i)$ and $p_{R_j}(x+r_j)$, the equation $R_{i,j}=\Res(p_{R_i}(x+r_i),p_{R_j}(x+r_j))$ provides the difference $r_i-r_j$. This way we obtain the product $p_{R_i}\cdot p_{R_j}$ up to translations by an integer.
Notice that, given the equation $y^2=R(x)$, the polynomial $R(x)$ is also defined up to translations by an integer.

\section{Siksek-Skorogatov Shimura curve $D=3\cdot 13$}\label{S-S exmpl}

In this section we shall compute an explicit equation for the hyperelliptic Shimura curve of discriminant $D=39$ exploiting the algorithm explained above. This curve was used in \cite{S-S} by Siksek and Skorogatov in order to find a counterexample to the Hasse principle explained by the Manin obstruction. Since their results depend on the conjectural equation of the curve given by Kurihara \cite{Kur}, the verification of such conjectural equation shows that the results of \cite{S-S} are unconditionally true.

\subsection*{Step 1: Reduction of the set of Weierstrass points at bad primes}

Let $X$ denote the hyperelliptic Shimura curve $X_0^{39}/\Q$. By Proposition \ref{WP}, $\WP(X)=\CM(R)\bigsqcup\CM(R_0)$, where $R_0=\Z[\frac{1+\sqrt{-39}}{2}]$ and $R=\Z[\sqrt{-39}]$. Let $K=\Q(\sqrt{-39})$. Notice that both $R$ and $R_0$ have class number 4, so their ring class fields have degree 4 over $K$.

We can compute the geometric special fiber of $\cX$ at 3 and 13
by means of Cerednik-Drinfeld's theory (cf. \cite[\S 3]{KoRo}
for a step-by-step guide on the computation of these special fibers using
\emph{MAGMA} \cite{Mag}). Notice that, in this case, $\cX=\cW$ since
$\cX/\langle\omega_D\rangle$ is smooth over $\Z$. In the drawings below, the integer
on each singular point stands for its thickness:

\begin{picture}(250,70)
\thicklines\qbezier(70,20)(90,-10)(110,20)

\qbezier(70,40)(90,70)(110,40)

\qbezier(110,40)(140,-20)(170,40)

\qbezier(110,20)(140,70)(170,20)

\qbezier(10,40)(40,-20)(70,40)

\qbezier(10,20)(40,70)(70,20)

\qbezier(260,20)(280,-10)(300,20)

\qbezier(260,40)(280,70)(300,40)

\qbezier(300,40)(330,-20)(360,40)

\qbezier(300,20)(330,70)(360,20)

\qbezier(200,40)(230,-20)(260,40)

\qbezier(200,20)(230,70)(260,20)

\put(16,28){\circle*{6}}

\put(65,28){\circle*{6}}

\put(116,28){\circle*{6}}

\put(165,28){\circle*{6}}

\put(206,28){\circle*{6}}

\put(255,28){\circle*{6}}

\put(306,28){\circle*{6}}

\put(355,28){\circle*{6}}

\put(16,38){1}

\put(65,38){1}

\put(116,38){1}

\put(165,38){1}

\put(206,38){2}

\put(255,38){3}

\put(306,38){2}

\put(355,38){1}

\put(40,-10){\mbox{Special fiber at }p=3}

\put(230,-10){\mbox{Special fiber at }p=13}

\end{picture}

\quad

\quad

Let $\cO$ be a maximal order in the quaternion algebra $B$ of discriminant $39$. Choose arbitrary points $P\in\CM(R)$ and $P_0\in\CM(R_0)$. As it is more convenient for computations to work with optimal embeddings instead of Heegner points, let $\varphi(P)\in\CM_{39,1}(R)$ and $\varphi(P_0)\in\CM_{39,1}(R_0)$ be the optimal embeddings attached to $P$ and $P_0$, respectively, via $\eqref{Phi}$. In particular, $\varphi(P)$ and $\varphi(P_0)$ yield the following decompositions computed with \emph{MAGMA} \cite{Mag}:
\[
\left\{\begin{array}{c}
B=K\oplus i_1 K\\
\cO=R\oplus e_1I_1\\
\end{array}\right.\mbox{ where }\left\{\begin{array}{l}
i_1\mbox{ is a quaternionic complement of $\varphi(P)$,}\quad i_1^2=447\\
e_1=i_1+(7\cdot\sqrt{-39}+18)\\
I_1=\langle \frac{1}{2},\frac{\sqrt{-39}}{894}+\frac{63}{298} \rangle_R
\end{array}\right.
\]and
\[
\left\{\begin{array}{c}
B=K\oplus i_1' K\\
\cO=R_0\oplus e_1'I_1'
\end{array}\right. \mbox{ where }\left\{\begin{array}{l}
i_1'\mbox{ is a quaternionic complement of $\varphi(P_0)$,}\quad i_1'^2=6\\
e_1'=i_1'\\
I_1'=\langle 1,\frac{\sqrt{-39}-9}{12} \rangle_{R_0}
\end{array}\right.
\]

\subsubsection*{Reduction modulo 3}

In order to compute the specialization modulo $p=3$ of $P$ and $P_0$, we shall compute the optimal embeddings $\psi_{R}\in\CM_{13,3}(R)$ and $\psi_{R_0}\in\CM_{13,3}(R_0)$ of Theorem \ref{redbim}. Their targets are maximal orders $\cS_3$ and $\cS_3'$ of the quaternion algebra $H_3$ of discriminant 3. Again both embeddings define the following decompositions:
\[
\left\{\begin{array}{c}
H_3=K\oplus i_2 K\\
\cS_3=R\oplus e_2I_2\\
\end{array}\right.\mbox{ where }\left\{\begin{array}{l}
i_2\mbox{ is a quaternionic complement, of $\psi_{R}$,}\quad i_2^2=-43\\
e_2=i_2-387\\
I_2=\langle \frac{1}{2},\frac{\sqrt{-39}}{1118}-\frac{10}{43} \rangle_R
\end{array}\right.
\]and
\[
\left\{\begin{array}{c}
H_3=K\oplus i_2' K\\
\cS_3'=R_0\oplus e_2'I_2'
\end{array}\right. \mbox{ where }\left\{\begin{array}{l}
i_2'\mbox{ is a quaternionic complement of $\psi_{R_0}$,}\quad i_2'^2=-12\\
e_2'=i_2'-12\\
I_2'=\langle 1,\frac{\sqrt{-39}-1}{156\cdot 2}-\frac{29}{78} \rangle_{R_0}
\end{array}\right.
\]

Hence, by Theorem \ref{compphi} the optimal embedding $\phi_{s}(P):R\hookrightarrow\End_\cO^{\cS_3}(\cO\otimes_R\cS_3)=\Lambda_3$ of \eqref{reds} is given by the decomposition:
\[
\left\{\begin{array}{c}
\Lambda_3\otimes\Q=K\oplus i_3 K\\
\Lambda_3=R\oplus e_3I_3\\
\end{array}\right.\mbox{ where }\left\{\begin{array}{l}
i_3\mbox{ is a quaternionic complement of $\phi_s(P)$,}\quad i_3^2=-43\cdot 447\\
e_3=-387\cdot (18-7\cdot\sqrt{-39})-i_3\\
I_3=(I_2\cap\frac{-1}{387}R)\overline{I_1}\cap\frac{1}{18-7\cdot\sqrt{-39}}I_2
\end{array}\right.
\]
Similarly $\phi_s(P_0):R_0\hookrightarrow\End_\cO^{\cS_3'}(\cO\otimes_{R_0}\cS_3')=\Lambda_3'$ is given by:
\[
\left\{\begin{array}{c}
\Lambda_3'\otimes\Q=K\oplus i_3' K\\
\Lambda_3'=R_0\oplus e_3'I_3'\\
\end{array}\right.\mbox{ where }\left\{\begin{array}{l}
i_3'\mbox{ is a quaternionic complement of $\phi_s(P_0)$,}\quad i_3'^2=-12\cdot 6\\
e_3'=-i_3'\\
I_3'=(I_2'\cap\frac{-1}{12}R_0)\overline{I_1'}.
\end{array}\right.
\]
Once we have a characterization of the embeddings $\phi_s(P)$ and $\phi_s(P_0)$, we proceed to describe the specialization of all Heegner points in $\CM(R)$ and $\CM(R_0)$. Recall that, in both cases, the sets $\CM(R)$ and $\CM(R_0)$ are $\Pic(R)$ and $\Pic(R_0)$-orbits respectively. Moreover, $\Pic(R)\simeq\Pic(R_0)\simeq\Z/4\Z$.

\emph{Case $\CM(R)$}: We pick a representative $J$ of a generator $[J]\in\Pic(R)$. We construct the left-$\Lambda_3$-ideals $\Lambda_3\phi_s(P)(J)$, $\Lambda_3\phi_s(P)(J^2)$, $\Lambda_3\phi_s(P)(J^3)$ and we compute their right orders $\pi([J^i]\ast\phi_s(P))$. We obtain that their number of units are:
$$\#(\Lambda_3^*)/2=\#(\pi([J]\ast\phi_s(P))^*)/2=\#(\pi([J^2]\ast\phi_s(P))^*)/2=\#(\pi([J^3]\ast\phi_s(P))^*)/2=1.$$
Thus, by \eqref{epsilon}, such integers are the thickness of each singular specializations.

Besides, we checked that $\Lambda_3\phi_s(P)(J)$ and $\Lambda_3\phi_s(P)(J^2)(\Lambda_3\phi_s(P)(J^3))^{-1}$ are principal, whereas $\Lambda_3\phi_s(P)(J^2)$, $\Lambda_3\phi_s(P)(J^3)$, $\Lambda_3\phi_s(P)(J)(\Lambda_3\phi_s(P)(J^2))^{-1}$, $\Lambda_3\phi_s(P)(J)(\Lambda_3\phi_s(P)(J^3))^{-1}$ are not.
Since for any pair of left $\Lambda_3$-ideals $I_1$ and $I_2$ their right orders are isomorphic as oriented Eichler orders if and only if $I_1\cdot I_2^{-1}$ is principal, it follows from \eqref{redhegpt} that
$$\Pi(P)=\varepsilon^{-1}(\pi(\phi_s(P)))=\varepsilon^{-1}(\pi([J]\ast\phi_s(P)))=\Pi(P^{\Phi_R([J])})$$ $$\Pi(P^{\Phi_R([J^2])})=\varepsilon^{-1}(\pi([J^2]\ast\phi_s(P)))=\varepsilon^{-1}(\pi([J^3]\ast\phi_s(P)))=\Pi(P^{\Phi_R([J^3])}).$$

\emph{Case $\CM(R_0)$}: Let $J'$ be a representative of a generator of $\Pic(R_0)$. Similarly as above, we construct the corresponding left-$\Lambda_3'$-ideals and we obtain: $$\#(\Lambda_3'^*)/2=\#(\pi([J']\ast\phi_s(P_0))^*)/2=\#(\pi([J'^2]\ast\phi_s(P_0))^*)/2=\#(\pi([J'^3]\ast\phi_s(P_0))^*)/2=1.$$

Moreover, we checked that $\Lambda_3'\phi_s(P_0)(J')$ and $\Lambda_3'\phi_s(P_0)(J'^2)(\Lambda_3'\phi_s(P_0)(J'^3))^{-1}$ are principal, whereas the remaining ones are not. Thus $\Pi(P_0)=\Pi(P_0^{\Phi_{R_0}([J'])})$  and $\Pi(P_0^{\Phi_{R_0}([J'^2])})=\Pi(P_0^{\Phi_{R_0}([J'^3])})$.

In conclusion we obtain the following diagram, describing the specialization of the Weierstrass points modulo $p=3$.

\begin{picture}(300,120)
\color[rgb]{0.5,0.5,1}

\qbezier(19,75)(118,28)(118,28)
\qbezier(68, 75)(118,28)(118,28)
\qbezier(119, 75)(216,28)(216,28)
\qbezier(168, 75)(216,28)(216,28)

\qbezier(226,75)(163,28)(163,28)
\qbezier(275, 75)(163,28)(163,28)
\qbezier(326,75)(265,28)(265,28)
\qbezier(375, 75)(265,28)(265,28)

\color{black}

\put(19, 75){\circle*{8}}

\put(68, 75){\circle*{8}}

\put(119,75){\circle*{8}}

\put(168,75){\circle*{8}}

\put(50, 73){$[J]$}

\put(99,73){$[J^2]$}

\put(148,73){$[J^3]$}

\put(226,75){\circle*{8}}

\put(275,75){\circle*{8}}

\put(326,75){\circle*{8}}

\put(375,75){\circle*{8}}

\put(255, 73){$[J']$}

\put(303,73){$[J'^2]$}

\put(353,73){$[J'^3]$}

\put(95, 75){\oval(190, 30)}

\put(300, 75){\oval(190, 30)}

\thicklines

\qbezier(170,20)(190,-10)(210,20)

\qbezier(170,40)(190,70)(210,40)

\qbezier(210,40)(240,-20)(270,40)

\qbezier(210,20)(240,70)(270,20)

\qbezier(110,40)(140,-20)(170,40)

\qbezier(110,20)(140,70)(170,20)

\put(118,28){\circle*{6}}

\put(163,28){\circle*{6}}

\put(216,28){\circle*{6}}

\put(265,28){\circle*{6}}

\put(285,18){$\cX$ mod 3}

\put(116,38){1}

\put(175,38){1}

\put(216,38){1}

\put(265,38){1}

\put(30,92){$\CM(R)$}

\put(285,92){$\CM(R_0)$}

\end{picture}

\quad

\subsubsection*{Reduction modulo 13}

With the same computations as in the previous setting, we obtain that the reduction of $\CM(R)$ and $\CM(R_0)$ modulo $p=13$ is given by the following diagram:

\begin{picture}(300,120)

\color[rgb]{0.5,0.5,1}

\qbezier(19,75)(118,28)(118,28)
\qbezier(68, 75)(163,28)(163,28)
\qbezier(119, 75)(216,28)(216,28)
\qbezier(168, 75)(163,28)(163,28)

\qbezier(226,75)(118,28)(118,28)
\qbezier(275, 75)(265,28)(265,28)
\qbezier(326,75)(216,28)(216,28)
\qbezier(375, 75)(265,28)(265,28)

\color{black}

\put(19, 75){\circle*{8}}

\put(68, 75){\circle*{8}}

\put(119,75){\circle*{8}}

\put(168,75){\circle*{8}}

\put(50, 73){$[J]$}

\put(99,73){$[J^2]$}

\put(148,73){$[J^3]$}

\put(226,75){\circle*{8}}

\put(275,75){\circle*{8}}

\put(326,75){\circle*{8}}

\put(375,75){\circle*{8}}

\put(255, 73){$[J']$}

\put(303,73){$[J'^2]$}

\put(353,73){$[J'^3]$}

\put(95, 75){\oval(190, 30)}

\put(300, 75){\oval(190, 30)}

\thicklines

\qbezier(170,20)(190,-10)(210,20)

\qbezier(170,40)(190,70)(210,40)

\qbezier(210,40)(240,-20)(270,40)

\qbezier(210,20)(240,70)(270,20)

\qbezier(110,40)(140,-20)(170,40)

\qbezier(110,20)(140,70)(170,20)

\put(118,28){\circle*{6}}

\put(163,28){\circle*{6}}

\put(216,28){\circle*{6}}

\put(265,28){\circle*{6}}

\put(285,18){$\cX$ mod 13}

\put(116,38){2}

\put(175,38){3}

\put(216,38){2}

\put(265,38){1}

\put(30,92){$\CM(R)$}

\put(285,92){$\CM(R_0)$}

\end{picture}

\quad

\subsection*{Step 2: Choice of the points at infinity}

Let $K_\infty=\Q(\sqrt{-7})$ and let
$R_\infty$ be its maximal order. As it is well known,
$\#\Pic(R_\infty)=1$. Hence, by \S \ref{leadT}, for any
$P_\infty\in\CM(R_\infty)$ we can choose $P_\infty$ and
$\omega_{39}(P_\infty)$ to be our points at infinity.
This choice of the points at infinity gives rise to an equation
\[
y^2=R(x),\quad\deg(R(x))=2g+2=8,
\]
defining the Weierstrass model $\cW$. Let
$R(x)=p_R(x)\cdot p_{R_0}(x)$ be the factorization
attached to the decomposition $\WP(\cW)=\CM(R)\sqcup\CM(R_0)$. Let
$a_R$ and $a_{R_0}$ be the leading coefficients of $p_R$ and $p_{R_0}$
respectively.

Since $\Q(\sqrt{a_R\cdot a_{R_0}})=K_\infty=\Q(\sqrt{-7})$, we deduce that $a_{R}\cdot a_{R_0}=-7\cdot N^2$ for some $N\in\Z$.
Given a prime $p$ dividing $a_R\cdot a_{R_0}$, by \eqref{dioph} we know that:
\[
7\cdot 39=m+\Norm(\lambda^-)39\cdot p,\mbox{  where
}m=7\Norm(\lambda_+)\in\Z^+.
\]
From this we obtain that $39\mid m=7\Norm(\lambda_+)\in
\Norm(K_\infty)$. Since $3$ and $13$ are inert in $K_\infty$, the
fact that $39\mid m\in\Norm(K_\infty)$ implies that $39^2\mid m$. Then,
dividing the above identity by 39, one obtains $7=39\cdot
m'+\Norm(\lambda^-) p$, where $m'\in\Z^+$. Thus $m=m'=0$, $p=7$ and
$\Norm(\lambda^-)=1$. Finally, by \eqref{ordcont} one
concludes that the leading coefficient of the hyperelliptic equation
must be $a_R\cdot a_{R_0}=-7$.

Moreover, we can compute $\pi(\phi_{ss}(\varphi(P_\infty)))\in\Pic(39\cdot 7,1)$ of Remark \ref{remss}. Namely, $$\pi(\phi_{ss}(\varphi(P_\infty)))=R_\infty\oplus jR_\infty,$$
where $jR_\infty$ is the quaternionic complement of $R_\infty$ with $j^2=-39$.
Since it can be checked that $R_0=\Z[\frac{1+\sqrt{-39}}{2}]$ can not be embedded in $\Lambda$, we conclude that $R=\Z[\sqrt{-39}]$ is embedded optimally in it. Therefore, $a_{R_0}=1$ and $a_R=-7$.

\subsection*{Step 3: Discriminants, Resultants and Fields of definition}

By Theorem \ref{subRCF}, points in $\CM(R)$ and $\CM(R_0)$ are
defined over a subfield of index 2 of the Hilbert class field $H_K$
of $K$. By Remark \ref{remalgquat}, to find such subextension we
must find an ideal $\mathfrak{a}$ of $R$ such that
$B\simeq\left(\frac{-39,\Norm_{K/\Q}(\mathfrak{a})}{\Q}\right)$. As one
checks, any $\mathfrak{a}$ such that
$\Norm_{K/\Q}(\mathfrak{a})=5$ does. Notice that $5$ splits in $K$, hence writing $5=\dP\cdot\dP'$ we have
$\Norm_{K/\Q}(\dP)=\Norm_{K/\Q}(\dP')=5$.

We used MAGMA \cite{Mag} to compute that the Hilbert class field of $K$ is defined by the polynomial $q(x)=x^4+4x^2-48$ over $K$. If $\alpha$ is any root of $q(x)$, then $H_K=\Q(\alpha,\sqrt{-39})$.

The automorphisms $\Phi_R(\dP)$ and complex conjugation $c$ act on $H_K$ by the rules:
\[
\Phi_R(\dP):\left\{\begin{array}{ccl}
\sqrt{-39}&\rightarrow &\sqrt{-39}\\
\alpha &\rightarrow & -\frac{\sqrt{-39}\alpha^3}{156} - \frac{7\sqrt{-39}\alpha}{39}
\end{array}\right.\quad c:\left\{\begin{array}{ccl}
\sqrt{-39}&\rightarrow &-\sqrt{-39}\\
\alpha &\rightarrow & -\alpha
\end{array}\right.
\]
Thus $\sigma=c\cdot\Phi_R(\dP)$ acts as:
\[
\sigma:\left\{\begin{array}{ccl}
\sqrt{-39}&\rightarrow &-\sqrt{-39}\\
\alpha &\rightarrow & -\frac{\sqrt{-39}\alpha^3}{156} - \frac{7\sqrt{-39}\alpha}{39}
\end{array}\right.
\]
We obtain that $M_R$, the fixed field by $\sigma$, is defined by the polynomial $x^4+8x^2-24x+16$ over $\Q$.
Since $\disc(M_R)=3^2\cdot 13$, we have that $\disc(p_R),\disc(p_{R_0})=N^2\cdot 3^2\cdot 13$, for certain $N\in\Z$.

Recall the following diagram summarizing the specialization of the Weierstrass points:

\begin{picture}(300,170)

\color[rgb]{0.5,0.5,1}

\qbezier(19,75)(100,110)(118,120)
\qbezier(68, 75)(100,100)(118,120)
\qbezier(119, 75)(216,120)(216,120)
\qbezier(168, 75)(216,120)(216,120)

\qbezier(226,75)(163,120)(163,120)
\qbezier(275, 75)(163,120)(163,120)
\qbezier(326,75)(265,120)(265,120)
\qbezier(375, 75)(265,120)(265,120)

\qbezier(19,75)(118,28)(118,28)
\qbezier(68, 75)(163,28)(163,28)
\qbezier(119, 75)(216,28)(216,28)
\qbezier(168, 75)(163,28)(163,28)

\qbezier(226,75)(118,28)(118,28)
\qbezier(275, 75)(265,28)(265,28)
\qbezier(326,75)(216,28)(216,28)
\qbezier(375, 75)(265,28)(265,28)

\color{black}

\put(19, 75){\circle*{8}}

\put(68, 75){\circle*{8}}

\put(119,75){\circle*{8}}

\put(168,75){\circle*{8}}

\put(226,75){\circle*{8}}

\put(275,75){\circle*{8}}

\put(326,75){\circle*{8}}

\put(375,75){\circle*{8}}

\put(95, 75){\oval(190, 30)}

\put(300, 75){\oval(190, 30)}

\thicklines

\qbezier(170,20)(190,-10)(210,20)

\qbezier(170,40)(190,70)(210,40)

\qbezier(210,40)(240,-20)(270,40)

\qbezier(210,20)(240,70)(270,20)

\qbezier(110,40)(140,-20)(170,40)

\qbezier(110,20)(140,70)(170,20)

\put(118,28){\circle*{6}}

\put(163,28){\circle*{6}}

\put(216,28){\circle*{6}}

\put(265,28){\circle*{6}}

\put(285,18){$\cX$ mod 13}

\put(116,38){2}

\put(175,38){3}

\put(216,38){2}

\put(265,38){1}

\qbezier(170,112)(190,82)(210,112)

\qbezier(170,132)(190,162)(210,132)

\qbezier(210,132)(240,72)(270,132)

\qbezier(210,112)(240,162)(270,112)

\qbezier(110,112)(140,162)(170,112)

\qbezier(110,132)(140,72)(170,132)

\put(30,92){$\CM(R)$}

\put(285,92){$\CM(R_0)$}

\put(118,120){\circle*{6}}

\put(163,120){\circle*{6}}

\put(216,120){\circle*{6}}

\put(265,120){\circle*{6}}

\put(285,130){$\cX$ mod 3}

\put(116,130){1}

\put(175,130){1}

\put(216,130){1}

\put(265,130){1}

\end{picture}

\quad

By Theorem \ref{EspSing} and \eqref{discarrels}, we have that $|\disc(p_R)|=2^{2k}\cdot3^2\cdot13^3$, $|\disc(p_{R_0})|=2^{2k'}\cdot3^2\cdot13$ and $\Res(p_R,p_{R_0})^2=2^{2k''}\cdot13^4$. Moreover, since $X_0^{39}$ has good reduction at 2,  \eqref{pow2} shows that $2k+2k'+2k''=16$.

\subsection*{Step 4: Computing equations}

Since the leading coefficient of $p_R$ is $a_{R}=-7$, we deduce that $q_R(x)=7^3p_R(x/7)$ is a monic polynomial of discriminant $7^{6}\disc(p_R)=2^{2k}\cdot3^2\cdot13^3\cdot 7^6$.

The instruction \emph{IndexFormEquation} of \emph{MAGMA} \cite{Mag} provides the possible candidates for $p_{R_0},$ $q_R$ and $p_R$ (denoted $\tilde p_{R_0}$, $\tilde q_R$ and $\tilde p_R$ respectively), up to transformations of the form $p(x)\rightarrow p(\pm x+r)$ with $r\in\Z$. We obtain that
\[
\tilde p_R(x)=\left\{
\begin{array}{cc}
-7x^4 - 51x^3 - 116x^2 - 84x - 19 & \disc(\tilde p_R)=3^2\cdot13^3\\
\left.\begin{array}{c}-7x^4 - 74x^3 - 200x^2 - 22x - 1\\-7x^4 + 38x^3 + 16x^2 - 182x - 169\end{array}\right\} & \disc(\tilde p_R)=2^{12}\cdot3^2\cdot13^3\\
\end{array}\right.
\]
and there are 16 more candidates $\tilde p_{R_0}(x)$ for $p_{R_0}(x)$, with discriminants $3\cdot13$, $2^4\cdot3\cdot13$, $2^{12}\cdot3\cdot13$ and $2^{16}\cdot3\cdot13$. If we compute the resultant $\Res(\tilde p_R(\mp x+\alpha),\tilde p_{R_0}(x))$ and look for solutions $\alpha\in\Z$ such that $\Res(\tilde p_R(\mp x+\alpha),\tilde p_{R_0}(x))^2=2^{2k''}\cdot13^4$, we obtain a single solution:
\[
p_{R_0}(x)=x^4 + 9x^3 + 29x^2 + 39x + 19,\quad
p_R(x)=-7x^4 - 79x^3 - 311x^2 - 497x - 277.
\]
In conclusion the equation we are looking for is
\[
\boxed{
y^2=-(7x^4 + 79x^3 + 311x^2 + 497x + 277)\cdot(x^4 + 9x^3 + 29x^2 + 39x + 19).
}
\]

Notice that this curve coincides with the one conjectured by Kurihara in \cite{Kur}.

\section{Case $D=5\cdot 11$}\label{Ex55}

Let $X$ be the hyperelliptic Shimura curve $X_0^{55}/\Q$. In this case the set of Weierstrass
points is $\WP(X)=\CM(\Z[\sqrt{-55}])\bigsqcup\CM(\Z[\frac{1+\sqrt{-55}}{2}])$ and both
$\Z[\frac{1+\sqrt{-55}}{2}]$ and $\Z[\sqrt{-55}]$ have class number 4.
As is the above situation, we can compute the geometric special fiber of $\cX$ at 5 and 11
using \cite[\S 3]{KoRo}.
In this case, the integral model $\cX$ does not correspond to a Weierstrass model
since $\cX/\langle\omega_D\rangle$ is not smooth over $\Z$.

\begin{picture}(250,70)
\thicklines\qbezier(70,20)(90,-10)(110,20)

\qbezier(70,40)(90,70)(110,40)

\qbezier(110,40)(140,-20)(170,40)

\qbezier(110,20)(140,70)(170,20)

\qbezier(10,10)(40,-10)(70,40)

\qbezier(10,40)(30,10)(60,10)

\qbezier(10,20)(30,45)(60,50)

\qbezier(10,50)(40,60)(70,20)

\qbezier(260,20)(280,-10)(300,20)

\qbezier(260,40)(280,70)(300,40)

\qbezier(300,40)(330,-20)(360,40)

\qbezier(300,20)(330,70)(360,20)

\qbezier(200,40)(230,-20)(260,40)

\qbezier(200,20)(230,70)(260,20)

\put(18,28){\circle*{6}}

\put(63,28){\circle*{6}}

\put(116,28){\circle*{6}}

\put(45,45){\circle*{6}}

\put(45,11){\circle*{6}}

\put(165,28){\circle*{6}}

\put(206,28){\circle*{6}}

\put(255,28){\circle*{6}}

\put(306,28){\circle*{6}}

\put(355,28){\circle*{6}}

\put(16,38){1}

\put(65,38){1}

\put(40,50){1}

\put(45,15){1}

\put(116,38){2}

\put(165,38){2}

\put(206,38){1}

\put(255,38){1}

\put(306,38){1}

\put(353,38){1}

\put(30,-10){\mbox{Special fiber of $\cX$ at }p=5}

\put(220,-10){\mbox{Special fiber of $\cX$ at }p=11}

\end{picture}

\quad

\quad

In order to transform $\cX$ into a Weierstrass model $\cW$ we shall need to blow down the
exceptional divisors and apply relation \eqref{thickblow_d} to obtain new thicknesses.

\begin{picture}(250,70)
\thicklines\qbezier(70,20)(90,-10)(110,20)

\qbezier(70,40)(90,70)(110,40)

\qbezier(110,40)(140,-20)(170,40)

\qbezier(110,20)(140,70)(170,20)

\qbezier(10,40)(40,-20)(70,40)

\qbezier(10,20)(40,70)(70,20)

\qbezier(260,20)(280,-10)(300,20)

\qbezier(260,40)(280,70)(300,40)

\qbezier(300,40)(330,-20)(360,40)

\qbezier(300,20)(330,70)(360,20)

\qbezier(200,40)(230,-20)(260,40)

\qbezier(200,20)(230,70)(260,20)

\put(16,28){\circle*{6}}

\put(65,28){\circle*{6}}

\put(116,28){\circle*{6}}

\put(165,28){\circle*{6}}

\put(206,28){\circle*{6}}

\put(255,28){\circle*{6}}

\put(306,28){\circle*{6}}

\put(355,28){\circle*{6}}

\put(16,38){3}

\put(65,38){1}

\put(116,38){2}

\put(165,38){2}

\put(206,38){1}

\put(255,38){1}

\put(306,38){1}

\put(355,38){1}

\put(30,-10){\mbox{Special fiber of $\cW$ at }p=5}

\put(220,-10){\mbox{Special fiber of $\cW$ at }p=11}

\end{picture}

\quad

\quad

Applying our algorithm, we obtain that the specialization of the Heegner points $\CM(\Z[\sqrt{-55}])$
and $\CM(\Z[\frac{1+\sqrt{-55}}{2}])$ in $\cX$ is given by the following diagram:

\begin{picture}(300,170)

\color[rgb]{0.5,0.5,1}

\qbezier(19,75)(100,110)(118,120)
\qbezier(68, 75)(100,100)(118,120)
\qbezier(119, 75)(216,120)(216,120)
\qbezier(168, 75)(216,120)(216,120)

\qbezier(226,75)(163,120)(163,120)
\qbezier(275, 75)(163,120)(163,120)
\qbezier(326,75)(265,120)(265,120)
\qbezier(375, 75)(265,120)(265,120)

\qbezier(19,75)(118,28)(118,28)
\qbezier(68, 75)(118,28)(118,28)
\qbezier(119, 75)(216,28)(216,28)
\qbezier(168, 75)(265,28)(265,28)

\qbezier(226,75)(163,28)(163,28)
\qbezier(275, 75)(163,28)(163,28)
\qbezier(326,75)(216,28)(216,28)
\qbezier(375, 75)(265,28)(265,28)

\color{black}

\put(118,28){\circle*{6}}

\put(163,28){\circle*{6}}

\put(216,28){\circle*{6}}

\put(265,28){\circle*{6}}

\put(19, 75){\circle*{8}}

\put(68, 75){\circle*{8}}

\put(119,75){\circle*{8}}

\put(168,75){\circle*{8}}

\put(226,75){\circle*{8}}

\put(275,75){\circle*{8}}

\put(326,75){\circle*{8}}

\put(375,75){\circle*{8}}

\put(95, 75){\oval(190, 30)}

\put(300, 75){\oval(190, 30)}

\thicklines\qbezier(170,20)(190,-10)(210,20)

\qbezier(170,40)(190,70)(210,40)

\qbezier(210,40)(240,-20)(270,40)

\qbezier(210,20)(240,70)(270,20)

\qbezier(110,10)(140,-10)(170,40)

\qbezier(110,40)(130,10)(160,10)

\qbezier(110,20)(130,45)(160,50)

\qbezier(110,50)(140,60)(170,20)

\put(118,28){\circle*{6}}

\put(163,28){\circle*{6}}

\put(216,28){\circle*{6}}

\put(145,45){\circle*{6}}

\put(145,11){\circle*{6}}

\put(265,28){\circle*{6}}

\put(285,18){$\cX$ mod 5}

\put(116,38){1}

\put(175,38){1}

\put(140,50){1}

\put(145,15){1}

\put(216,38){2}

\put(265,38){2}

\qbezier(170,112)(190,82)(210,112)

\qbezier(170,132)(190,162)(210,132)

\qbezier(210,132)(240,72)(270,132)

\qbezier(210,112)(240,162)(270,112)

\qbezier(110,112)(140,162)(170,112)

\qbezier(110,132)(140,72)(170,132)

\put(30,92){$\CM(\Z[\sqrt{-55}])$}

\put(285,92){$\CM(\Z[\frac{1+\sqrt{-55}}{2}])$}

\put(118,120){\circle*{6}}

\put(163,120){\circle*{6}}

\put(216,120){\circle*{6}}

\put(265,120){\circle*{6}}

\put(285,130){$\cX$ mod 11}

\put(116,130){1}

\put(175,130){1}

\put(216,130){1}

\put(265,130){1}

\end{picture}

\quad

\quad

Hence, blowing-down $\cX$ as above, we obtain the thickness of the specialization of each
Weierstrass point $P\in\WP(\cW)$. Applying the rest of the algorithm just as in \S \ref{S-S
exmpl}, we obtain that the model $\cW$ over $\Z[1/2]$ is given by the equation:
\[
\boxed{ y^2=(-3x^4 + 32x^3 -130x^2 + 237x - 163)\cdot(x^4 - 8x^3 + 34x^2 - 83x + 81). }
\]
This curve also coincides with the one conjectured by Kurihara (cf. \cite{Kur}) in this case.

\section{Atkin-Lehner quotients}\label{ALquo}

In \S \ref{Algorithm} we gave an algorithm which in principle works for any hyperelliptic Shimura curve of odd discriminant admitting a Weierstrass model $\cW$ obtained by blowing-down exceptional divisors of $\cX$. However, this algorithm exploits the instruction \emph{IndexFormEquation}, which is implemented in  \emph{MAGMA} only for small degree field extensions. As long as the genus increases, the degrees of the fields involved in the computation become so large that make impossible to proceed with the algorithm.

In this section we shall explain how to adapt the algorithm of \S \ref{Algorithm} to compute equations of hyperelliptic quotients of Shimura curves by Atkin-Lehner involutions. We expect that the degrees of the fields involved in this case will be smaller and, consequently, we shall be able to compute more examples.

\subsection{Quotient of the special fiber}\label{Quotfib}

As above, denote by $\cX/\Z$ Morita's integral model of $X=X_0^D$. Write $Y=X/\langle\omega_m\rangle$ and $\cY=\cX/\langle\omega_m\rangle$. Due to Cerednik-Drinfeld's uniformization, we have an explicit description of the fiber $\cX_p$ at $p\mid D$ and the action of the Atkin-Lehner involutions on its set of irreducible components and singular points.
This allows us to compute the irreducible components of the fiber $\cY_p$. In order to obtain the thicknesses of its singular points $(\cY_p)_{\rm{sing}}$, recall that the completed local ring of any singular point $x$ of $\cX_p$ is of the form:
\[
\widehat{O_{\cX',x'}}\simeq\widehat{O_{S',\mathfrak{p}}}[[u,v]]/(uv-c) \quad c\in \mathfrak{m}_{\mathfrak{p}}.
\]
Here, $u$ and $v$ vanish respectively on each of the irreducible components that meet in $x$.

Let $\pi:\cX\ra\cY$ be the quotient map.
If $\omega_m$ fixes $x$ there are two possibilities: $\omega_m$ fixes $u$ and $v$ or $\omega_m$ exchanges them. If $\omega_m$ fixes $u$ and $v$, the completed local ring of the image $\pi(x)$ is given by
\[
\widehat{O_{\cY',\pi(x')}}\simeq\widehat{O_{S',\mathfrak{p}}}[[x,y]]/(xy-c^2),
\]
where the induced pull-back $\pi^*:\widehat{O_{\cY',\pi(x')}}\ra\widehat{O_{\cX',x'}}$ is given by $x\mapsto u^2,\;y\mapsto v^2$. Thus the thickness of the singular point $\pi(x)$ is twice the thickness of $x$. If $\omega_m u=v$, the completed local ring of the image $\pi(x)$ is given by
\[
\widehat{O_{\cY',\pi(x')}}\simeq\widehat{O_{S',\mathfrak{p}}}[[z]]/(z-c),
\]
where the induced pull-back $\pi^*:\widehat{O_{\cY',\pi(x')}}\ra\widehat{O_{\cX',x'}}$ is given by $z\mapsto uv$. Thus $\pi(x)$ becomes a non-singular point of $\cY_p$. Finally, if $\omega_m(x)=x'\neq x$ the map $\pi$ is not ramified at $x$. Hence it provides an isomorphism of local rings $O_{\cX,x}\simeq O_{\cY,\pi(x)}$. This implies that the thickness of $\pi(x)$ coincides with that of $x$.
Notice that, since we control the singular specialization of Heegner points in $\cX_p$, we also control that of their image in $\cY_p$.

\subsection{Weierstrass points, leading coefficients and fields of definition}

We shall assume that there exists a quadratic order $R_\infty\subset K_\infty$ of discriminant prime-to-$D$ and class number $h_{R_\infty}=1$ such that $\emptyset\neq\CM(R_\infty)\subset X(K_\infty)$. Assume also that $Y$ is hyperelliptic and that the hyperelliptic involution $\omega$ of $Y$ is the image of $\omega_n$ for some $n\mid D$. Notice that all hyperelliptic Shimura curves in Table 1 verify these assumptions. Clearly $n\neq m$ since $\omega_m$ is trivial in $Y$. Finally, assume that blowing-down suitably exceptional divisors of $\cY$ we can obtain a Weierstrass model $\cW_Y$ of $Y$.

As above, the set of Weierstrass points $\WP(Y)$ coincides with the set of fixed points of $\omega$. Let $\pi(P)\in\WP(Y)$. Then $\pi(P)=\omega(\pi(P))=\pi(\omega_n(P))$, thus $\omega_n(P)=P$ or $\omega_n(P)=\omega_m(P)$. It follows that the set $\WP(Y)$ is the image of the union of the set of fixed points of $\omega_n$ and of $\omega_{m}\circ\omega_n=\omega_{n\cdot m/\gcd(m,n)^2}$.
By Theorem \ref{WP}, this set coincides with a set of Heegner points $\bigsqcup_i\CM(R_i)$, where $R_i^0=\Q(\sqrt{-n})$ or $\Q(\sqrt{-n\cdot m})$.

Recall that if $P\in\CM(R_i)$ is fixed by $\omega_D$, then $\Q(P)$ can be computed by means of Theorem \ref{subRCF}. Besides, if $P$ is fixed by $\omega_n$, $n\neq D$, then the field of definition of $P$ is just $H_{R_i}$ by \cite[Theorem 5.12]{GoRo1}.
The following proposition describes the field of definition of each $\pi(P)\in\WP(Y)$:
\begin{proposition}
Let $n\neq m$ be divisors of $D$. Let $P\in\CM(R)$ be a Heegner point fixed by $\omega_n$. Write $Y=X/\langle\omega_m\rangle$ and set $\pi:X\ra Y$ for the quotient map. Fix an embedding $H_R\subset \C$ and let $c$ denote complex conjugation.
\begin{itemize}
\item[(1)] If $m\mid n$ then $\Q(\pi(P))$ is the subfield of $\Q(P)$ fixed by $\Phi_R(\mathfrak{m})$, where $\mathfrak{m}$ is the unique ideal of $R$ of norm $m$.
\item[(2)] If $\omega_m(P)=\omega_D(P)$ (i.e. either $m=D$ or $n=D/m$) then $\Q(\pi(P))$ is the subfield of $\Q(P)$ fixed by $c\cdot\Phi_{R_i}([\mathfrak{a}]$, where $\mathfrak{a}$ is an ideal of $R$ (depending on $P$) satisfying
\begin{equation}\label{eqcond}
B\simeq\left(\frac{-n,\frac{D}{n}\cdot\Norm_{R^0/\Q}(\mathfrak{a})}{\Q}\right).
\end{equation}
\item[(3)] If $\omega_m(P)\neq\omega_D(P)$ and $m\nmid n$ then $\Q(\pi(P))=\Q(P)$.
\end{itemize}
\end{proposition}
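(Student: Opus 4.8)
The plan is to analyze, for each of the three cases, how the Galois group acts on the coordinates of $\pi(P)\in Y(\overline\Q)$, using the fact that $\pi:X\to Y$ realizes $\Q(\pi(P))$ as the subfield of $\Q(P)$ fixed by the subgroup of $\mathrm{Gal}(\overline\Q/\Q)$ that stabilizes the $\omega_m$-orbit $\{P,\omega_m(P)\}$ as a set. More precisely, since $\pi(P)=\pi(P')$ if and only if $P'\in\{P,\omega_m(P)\}$, an element $\tau\in\mathrm{Gal}(\overline\Q/\Q)$ fixes $\pi(P)$ if and only if $\tau(P)\in\{P,\omega_m(P)\}$. The subgroup fixing $P$ itself corresponds to $\Q(P)$, which is already known: by Theorem \ref{subRCF} (together with \cite[Theorem 5.12]{GoRo1}) it is $H_R$ if $P$ is fixed by some $\omega_n$ with $n\neq D$, and the index-$2$ subfield cut out by $c\cdot\Phi_R([\mathfrak{a}])$ if $P$ is fixed by $\omega_D$. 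So in every case $\Q(\pi(P))\subseteq \Q(P)$ has index $1$ or $2$, and the task is to pin down exactly which extra automorphism, if any, sends $P$ to $\omega_m(P)$.

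First I would dispose of Case (1): here $m\mid n$. By Lemmas 5.9 and 5.10 of \cite{GoRo1}, the action of $\omega_m$ on the Heegner point $P\in\CM(R)$ is given, via the correspondence \eqref{Phi} and Theorem \ref{theoryCM}, by the action of the class $[\mathfrak{m}]\in\Pic(R)$ of the unique ideal $\mathfrak{m}$ of norm $m$ (uniqueness holds because $m\mid D$ and $R^0=\Q(\sqrt{-n})$ with $m\mid n$, so $m$ ramifies in $R^0$ and divides the conductor-compatible level). Thus $\omega_m(P)=P^{\Phi_R([\mathfrak{m}])}$. Since $m\neq n\Rightarrow$ in particular $\Phi_R([\mathfrak m])\neq\mathrm{id}$ as needed, and $\Phi_R([\mathfrak m])$ does fix $P$ would force $m$-divisibility collapse — one checks $[\mathfrak m]$ is nontrivial in $\Pic(R)/(\text{stabilizer of }P)$ — hence $\Q(\pi(P))$ is exactly the subfield of $\Q(P)$ fixed by $\Phi_R([\mathfrak m])$. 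This is a direct computation with the reciprocity law.

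Next, Case (3): $\omega_m(P)\neq\omega_D(P)$ and $m\nmid n$. Here I claim no nontrivial element of $\mathrm{Gal}(\overline\Q/\Q)$ sends $P$ to $\omega_m(P)$, so $\Q(\pi(P))=\Q(P)$. Writing $\omega_m(P)=P^{\Phi_R([\mathfrak m])}$ as above (now $[\mathfrak m]$ may be a class of an ideal whose norm $m$ need not be a single prime; one uses $\omega_m\cdot\omega_n=\omega_{mn/\gcd(m,n)^2}$), one sees that $\omega_m(P)$ lies in the same $\mathrm{Gal}(H_R/K)$-orbit as $P$, and the complex conjugation $c$ sends $P$ to $\omega_n(P)=P$ or to $\omega_D(P)$ depending on the case of Theorem \ref{subRCF} / \cite[Theorem 5.12]{GoRo1}; since $P$ is fixed by $\omega_n$ with $n\neq D$, complex conjugation fixes $P$, so $\Q(P)=H_R$ and every automorphism sending $P$ to a Galois conjugate in the $\Pic(R)$-orbit is already in $\mathrm{Gal}(\overline\Q/H_R)$; it sends $P$ to $\omega_m(P)$ only if $[\mathfrak m]$ is trivial, i.e. $\omega_m(P)=P$, contradicting $\omega_m$ nontrivial on $P$ — which is guaranteed precisely by the hypothesis $m\nmid n$ and $\omega_m(P)\neq\omega_D(P)$. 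So $\Q(\pi(P))=\Q(P)$.

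The main obstacle is Case (2), where $\omega_m(P)=\omega_D(P)$, i.e. $P$ is fixed by $\omega_{D/m}$; combined with $\omega_n(P)=P$ this means $n=D/m$ (or $m=D$, the degenerate case $n=1$ excluded since $Y$ is hyperelliptic of genus $>1$). Then $\Q(P)=H_R$ (as $P$ is fixed by $\omega_n$, $n\neq D$), and an automorphism sends $P$ to $\omega_m(P)=\omega_D(P)$ precisely when it equals $c\cdot\Phi_R([\mathfrak a])$ for an ideal $\mathfrak a$ realizing $\omega_D$ on $P$ in the sense of Theorem \ref{subRCF}; but $\omega_D=\omega_m\cdot\omega_n$ with $\omega_n$ trivial on $P$ and $\omega_m$ ... here I must apply the quaternion-algebra splitting criterion. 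Following \cite[\S 5]{GoRo1} and Theorem \ref{subRCF}, $\omega_D$ acts on $P$ as $c\cdot\Phi_R([\mathfrak a])$ with $B\simeq\left(\frac{-D,\Norm_{K/\Q}(\mathfrak a)}{\Q}\right)$; but since $P$ is CM by $R$ with $R^0=\Q(\sqrt{-n})$ and $\omega_D=\omega_n\cdot\omega_{D/n}$ acts through $\omega_{D/n}$ only, one rewrites the Hilbert-symbol condition with base field $\Q(\sqrt{-n})$ rather than $\Q(\sqrt{-D})$: $B\simeq\left(\frac{-n,\,\frac{D}{n}\Norm_{R^0/\Q}(\mathfrak a)}{\Q}\right)$, which is exactly \eqref{eqcond}. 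The delicate point here — and the step I expect to require the most care — is verifying that this change of quadratic subfield in the Hilbert symbol is legitimate, i.e. that the local invariants of $B$ match: one checks that at $p\mid D$ with $p\mid n$ the field $\Q_p(\sqrt{-n})$ is ramified and carries the obstruction, while the factor $\frac Dn$ accounts exactly for the primes $p\mid D/n$; this is a Hilbert-symbol bookkeeping argument using $\left(\frac{-D,\ast}{\Q_p}\right)=\left(\frac{-n,\ast}{\Q_p}\right)\left(\frac{D/n,\ast}{\Q_p}\right)$ prime by prime. Once \eqref{eqcond} is established, $\Q(\pi(P))$ is the fixed field of $c\cdot\Phi_R([\mathfrak a])$ inside $\Q(P)=H_R$, completing the proof.
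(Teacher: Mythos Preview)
Your overall framework is right: $\Q(\pi(P))$ is the fixed field of those $\tau\in\Gal(\overline\Q/\Q)$ with $P^\tau\in\{P,\omega_m(P)\}$, so the whole question is whether $\omega_m(P)$ is a Galois conjugate of $P$ and, if so, by which element. Case (1) is handled exactly as in the paper, via \cite[Lemma 5.9]{GoRo1}.

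The argument for Case (3), however, has a genuine gap. You write $\omega_m(P)=P^{\Phi_R([\mathfrak m])}$, placing $\omega_m(P)$ in the $\Gal(H_R/K)$-orbit of $P$; but this is precisely what fails when $m\nmid n$. The formula of \cite[Lemma 5.9]{GoRo1} requires every prime dividing $m$ to ramify in $K=R^0=\Q(\sqrt{-n})$, i.e.\ $m\mid n$. When $m\nmid n$ some prime $p\mid m$ is inert in $K$, there is no ideal of $R$ of norm $p$, and $\omega_p$ does not act through $\Pic(R)$ at all: it moves $P$ to a different $\Pic(R)$-orbit inside $\CM(R)$ (which, for $n$ a proper divisor of $D$, is a disjoint union of several such orbits, permuted by the $\omega_d$ with $d\mid D/n$). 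Combined with the hypothesis $\omega_m(P)\neq\omega_D(P)$, this puts $\omega_m(P)$ outside the full $\Gal(H_R/\Q)$-orbit of $P$, so no Galois element carries $P$ to $\omega_m(P)$ and $\Q(\pi(P))=\Q(P)$. Your argument, if its premise held, would instead produce the nontrivial element $\Phi_R([\mathfrak m])\in\Gal(H_R/K)$ sending $P$ to $\omega_m(P)$, forcing $\Q(\pi(P))$ to be a \emph{proper} subfield of $H_R$ --- the opposite of the stated conclusion. (The sentence ``every automorphism sending $P$ to a Galois conjugate in the $\Pic(R)$-orbit is already in $\Gal(\overline\Q/H_R)$'' is simply false: the elements of $\Gal(H_R/K)$ do exactly that.)

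A smaller point on Case (2): Theorem \ref{subRCF} as stated in the paper applies only to $K=\Q(\sqrt{-D})$, whereas here $K=\Q(\sqrt{-n})$ with $n\neq D$, so you cannot quote it and then massage the Hilbert symbol. Condition \eqref{eqcond} is the content of \cite[Lemma 5.10]{GoRo1} in its general form; the paper simply cites that lemma rather than re-deriving it.
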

\begin{proof}
This follows immediately from the fact that if $m\mid n$ then $\omega_m(P)=P^{\Phi_R(\mathfrak{m})}$ (cf. \cite[Lemma 5.9]{GoRo1}), if $\omega_m(P)=\omega_D(P)$ then $\omega_m(P)=P^{c\cdot\Phi_{R_i}([\mathfrak{a}])}$ (cf. \cite[Lemma 5.10]{GoRo1}), and if neither $m\mid n$ nor $\omega_m(P)=\omega_D(P)$ then $\omega_m$ acts transitively on the $\Gal(H_R/\Q)$-orbit of $P$.
\end{proof}

Just as in Remark \ref{remalgquat}, the ideal $\mathfrak{a}$ depends on $P$ but its class $\{\mathfrak{a}\}\in\Pic(R_i)/\Pic(R_i)^2$ only depends on $R$ and determines the isomorphism class of $\Q(\pi(P))$ for every $P\in\CM(R)$. Furthermore, in our particular setting where $[H:H_R]$ is odd, the class $\{\mathfrak{a}\}$ is uniquely determined by \eqref{eqcond}.

As in the previous case, the model $\cY$ can be non-hyperelliptic (i.e. $\cY/\langle\omega\rangle$ may not be smooth over $\Z$). According to our previous assumptions, we can turn it into an hyperelliptic model $\cW_Y/\Z$ by blowing-down suitably irreducible components. By means of formula \eqref{thickblow_d}, we can recover the thicknesses of the singular points of the fiber $(\cW_Y)_p$. Since we control the specialization of the Weierstrass points $\WP(Y)$ in $\cY_p$, we also control the specialization of the Weierstrass points in $\WP(\cW_Y)$.

Notice that there may exist Weierstrass points $\pi(P)\in\WP(Y)$ specializing to non-singular points on $\cY_p$, but having singular specialization on $(\cW_Y)_p$. This happens because their specialization on $\cY_p$ lie on irreducible components which were blown-down in order to obtain $\cW_Y$. By means of \eqref{redcc}, we control the irreducible component where the specialization $P$ lies. Hence we control the singular specialization of $\pi(P)$ in the fiber $(\cW_Y)_p$.

Choose $P_\infty\in\CM(R_\infty)$. Since $h_{R_\infty }=1$, the set $\CM(R_\infty)$ is a $W(D)$-orbit. Moreover, $\pi(\omega_n(P_\infty))=\omega(\pi(P_\infty))\neq\pi(P_\infty)$ since $\omega_n(P_\infty)\neq\omega_m(P_\infty)$, and $\pi(P_\infty)$ is defined over a subfield of $K_\infty$. This implies that we can set $\pi(P_\infty)$ and $\omega(\pi(P_\infty))$ to be our points at infinity.

Once we fix the points at infinity, the model $\cW_Y$ is defined, over $\Z[1/2]$, by an equation of the form
\[
y^2=R(x)=\prod_i p_{R_i}(x),
\]
where each of the polynomials $p_{R_i}(x)$ is attached to $\pi(\CM(R_i))$, and we control the field that each one defines.

We deduced in \S\ref{hypShi} that the valuation of the leading coefficient $a_R$ at any prime $p$ can be obtained from the intersection index between $\pi(P_\infty)$ and $\omega(\pi(P_\infty))$ at $p$.
By the projection formula,
\begin{equation}\label{projfor}
(\pi(P_\infty),\pi(\omega_n(P_\infty)))_p=(P_\infty,\pi^*\pi(\omega_n(P_\infty)))_p=(P_\infty,\omega_n(P_\infty))_p+(P_\infty,\omega_{n'}(P_\infty))_p,
\end{equation}
where $n'=\frac{n\cdot m}{\gcd(m,n)^2}$.
Hence, the valuation of the leading coefficient at any prime, $$\nu_p(a_R)=\left(1-\left(\frac{K_\infty}{p}\right)\right)(\pi(P_\infty),\pi(\omega_n(P_\infty)))_p,$$
can be computed by means of \eqref{ordcont}.
Since the leading coefficient $a_{R_i}$ of each $p_{R_i}(x)$ also detects whether $P_\infty$ specializes to the same supersingular point as an element of $\CM(R_i)$, we can compute each $a_{R_i}$ just as in \S\ref{Algorithm}.

At this point, assuming that $D$ is odd, we can proceed with the algorithm of \S\ref{Algorithm} in order to obtain an equation for $\cW_Y$. Indeed, we control the leading coefficient of each $p_{R_i}(x)$, their splitting field and the singular specialization of any $\pi(P)\in\WP(\cW_Y)$.

\subsection{Example}

Let $X=X_0^{35}/\Q$ be the Shimura curve of discriminant $35$. In this section we shall compute the quotient curve $Y=X/\langle\omega_5\rangle$.
Since $X$ is itself hyperelliptic we deduce that $Y$ is hyperelliptic. Moreover, we check that it satisfies the assumptions of the previous section.

Write $\pi:\cX\ra \cY$ for the quotient map as above. The set of Weierstrass points of $Y$ is the image through $\pi$ of the set of Heegner points $\mathfrak{S}=\CM(R^{35})\sqcup\CM(R^{35}_0)\sqcup\CM(R^{7})\sqcup\CM(R_0^{7})$, where $R^{35}=\Z[\frac{1+\sqrt{-35}}{2}]$, $R^{35}_0=\Z[\sqrt{-35}]$, $R^{7}=\Z[\frac{1+\sqrt{-7}}{2}]$ and $R^{7}_0=\Z[\sqrt{-7}]$. We obtain that $R^{35}$ has Picard number $2$, $R_0^{35}$ has Picard number $6$, and both $R^{7}$ and $R_0^{7}$ have Picard number 1. Here, we present a diagram that describes the special fibers of $\cX$ at $p=5,7$ and the specialization of $\mathfrak{S}$ computed using the techniques of \S\ref{Algorithm}.

\begin{picture}(300,200)

\color[rgb]{0.5,0.5,1}

\qbezier(19,75)(100,110)(118,120)
\qbezier(68, 75)(100,100)(118,120)
\qbezier(119, 75)(163,120)(163,120)
\qbezier(226, 75)(216,120)(216,120)
\qbezier(168, 75)(216,120)(216,120)
\qbezier(275, 75)(163,120)(163,120)
\qbezier(326,75)(265,120)(265,120)
\qbezier(375, 75)(265,120)(265,120)

\qbezier(119,168)(118,120)(118,120)
\qbezier(168,168)(118,120)(118,120)
\qbezier(226,168)(163,120)(163,120)
\qbezier(275,168)(163,120)(163,120)

\qbezier(19,75)(118,28)(118,28)
\qbezier(68, 75)(163,28)(163,28)
\qbezier(119, 75)(118,28)(118,28)
\qbezier(168, 75)(216,28)(216,28)
\qbezier(226,75)(265,28)(265,28)
\qbezier(275, 75)(163,28)(163,28)
\qbezier(326,75)(265,28)(265,28)
\qbezier(375, 75)(216,28)(216,28)

\color{black}

\put(19, 75){\circle*{8}}
\put(68, 75){\circle*{8}}

\put(119,75){\circle*{8}}
\put(168,75){\circle*{8}}
\put(226,75){\circle*{8}}
\put(275,75){\circle*{8}}
\put(326,75){\circle*{8}}
\put(375,75){\circle*{8}}

\put(119,168){\circle*{8}}
\put(168,168){\circle*{8}}
\put(226,168){\circle*{8}}
\put(275,168){\circle*{8}}

\put(45, 75){\oval(75, 30)}
\put(245, 75){\oval(290, 30)}

\put(145, 168){\oval(75, 30)}
\put(250, 168){\oval(75, 30)}

\thicklines

\qbezier(170,20)(190,-10)(210,20)

\qbezier(170,40)(190,70)(210,40)

\qbezier(210,40)(240,-20)(270,40)

\qbezier(210,20)(240,70)(270,20)

\qbezier(110,40)(140,-20)(170,40)

\qbezier(110,20)(140,70)(170,20)

\put(118,28){\circle*{6}}

\put(163,28){\circle*{6}}

\put(216,28){\circle*{6}}

\put(265,28){\circle*{6}}

\put(285,18){$\cX$ mod 5}

\put(116,38){2}

\put(175,38){2}

\put(216,38){1}

\put(265,38){1}

\qbezier(170,112)(190,82)(210,112)

\qbezier(170,132)(190,162)(210,132)

\qbezier(210,132)(240,72)(270,132)

\qbezier(210,112)(240,162)(270,112)

\qbezier(110,112)(140,162)(170,112)

\qbezier(110,132)(140,72)(170,132)

\put(30,92){$\CM(R^{35})$}

\put(285,92){$\CM(R^{35}_0)$}

\put(70,170){$\CM(R^{7})$}

\put(290,170){$\CM(R^{7}_0)$}

\put(118,120){\circle*{6}}

\put(163,120){\circle*{6}}

\put(216,120){\circle*{6}}

\put(265,120){\circle*{6}}

\put(285,130){$\cX$ mod 7}

\put(116,130){1}

\put(175,130){1}

\put(216,130){3}

\put(265,130){3}

\end{picture}

\quad

By Cerednik-Drinfeld's description of the fiber $\cX_5$, we know that $\omega_5$ exchanges its irreducible components, moreover, it exchanges its singular points of thickness 1 and its singular points of thickness 2. Similarly, $\omega_5$ fixes the irreducible components of $\cX_7$, exchanges its singular points of thickness 3 and fixes its singular points of thickness 1.
Applying the recipe detailed in \S\ref{Quotfib}, we obtained that the specialization of $\pi(\mathfrak{S})$ and the special fibers $\cY_5$ and $\cY_7$ are given by the following diagram:

\begin{picture}(300,170)

\color[rgb]{0.5,0.5,1}

\qbezier(68, 75)(120,122)(120,122)
\qbezier(119, 75)(190,122)(190,122)
\qbezier(168, 75)(120,122)(120,122)
\qbezier(226, 75)(190,122)(190,122)
\qbezier(275, 75)(260,122)(260,122)
\qbezier(326, 75)(260,122)(260,122)

\qbezier(168, 75)(165,31)(165,31)
\qbezier(226, 75)(165,31)(165,31)
\qbezier(275, 75)(215,31)(215,31)
\qbezier(326, 75)(215,31)(215,31)

\color{black}

\put(68,75){\circle*{8}}

\put(119,75){\circle*{8}}

\put(168,75){\circle*{8}}

\put(226,75){\circle*{8}}

\put(275,75){\circle*{8}}

\put(326,75){\circle*{8}}

\put(68, 75){\oval(30, 30)}

\put(119, 75){\oval(30, 30)}

\put(168, 75){\oval(30, 30)}

\put(274, 75){\oval(140, 30)}

\thicklines

\qbezier(210,40)(220,10)(270,50)
\qbezier(160,40)(190,-10)(220,40)
\qbezier(160,40)(165,50)(170,40)
\qbezier(110,50)(160,10)(170,40)
\qbezier(210,40)(215,50)(220,40)

\put(165,31){\circle*{6}}

\put(215,31){\circle*{6}}

\put(285,28){$\cX$ mod 5}

\put(173,35){2}

\put(224,35){1}

\qbezier(110,112)(170,172)(190,122)
\qbezier(110,132)(170,72)(190,122)
\qbezier(190,122)(210,72)(270,132)
\qbezier(190,122)(210,172)(270,112)

\put(40,52){\small$\pi(\CM(R^{7}))$}

\put(95,52){\small$\pi(\CM(R^{7}_0))$}

\put(150,52){\small$\pi(\CM(R^{35}))$}

\put(285,52){\small$\pi(\CM(R^{35}_0))$}

\put(120,122){\circle*{6}}
\put(190,122){\circle*{6}}
\put(260,122){\circle*{6}}

\put(285,130){$\cX$ mod 7}

\put(116,130){2}
\put(175,130){2}
\put(255,130){3}

\end{picture}

\quad

Let $R_\infty$ be the maximal order of $K_\infty=\Q(\sqrt{-43})$. Since $h_{R_\infty}=1$ and $\CM(R_\infty)\neq\emptyset$,
we choose $\{\pi(P_\infty),\pi(\omega_D(P_\infty))\}\subseteq\pi(\CM(R_\infty))$ to be our points at infinity. Then, by \eqref{projfor},
\[
\nu_p(a_R)=\left(1-\left(\frac{K_\infty}{p}\right)\right)((P_\infty,\omega_D(P_\infty))_p+(P_\infty,\omega_{D/m}(P_\infty))_p)
\]
In order to compute $(P_\infty,\omega_D(P_\infty))_p$, we apply \eqref{dioph} and it follows that
\[
43\cdot 35=n+\Norm(\lambda^-)\cdot 35\cdot p,\;\;\quad n=43\cdot\Norm(\lambda_+)\in\Z.
\]
Since $35\mid n$ and 7,5 are inert in $K_\infty$, we deduce that $35^2\cdot n'=n$. Thus,
\[
43=35\cdot n'+\Norm(\lambda^-)\cdot p,\;\;\quad n'\in\Z.
\]
Hence the solutions are $n'=0$, $p=43$, $\Norm(\lambda^-)=1$ and $n'=1$, $p=2$, $\Norm(\lambda^-)=4$. Applying formula \eqref{ordcont}, we deduce that $(P_\infty,\omega_D(P_\infty))_{43}=1$ and $(P_\infty,\omega_D(P_\infty))_{2}=2$.

Similarly for $(P_\infty,\omega_{D/m}(P_\infty))_p$, we apply formula \eqref{dioph} obtaining:
\[
43\cdot 7=n+\Norm(\lambda^-)\cdot 35\cdot p,\;\;\quad n=43\cdot\Norm(\lambda_+)\in\Z.
\]
As above, $7\mid n$ and it is inert in $K_\infty$, hence $7^2\cdot n'=n$ and it follows that
\[
43=7\cdot m'+5\cdot\Norm(\lambda^-)\cdot p,\;\;\quad m'\in\Z.
\]
This implies $m'\equiv 4\;(\rm{mod}\;5)$ and, thus, $m'=4$, $p=3$, $\Norm(\lambda^-)=1$. By means of \eqref{ordcont} we have that $(P_\infty,\omega_D(P_\infty))_{3}=1$.

Therefore the unique primes that divide $a_R$ are $43$, $3$ and $2$ and their valuations are $\nu_{43}(a_R)=1$, $\nu_{3}(a_R)=2$ and $\nu_{2}(a_R)=4$. Moreover, we can compute the specialization of $P_\infty$ and $\omega(P_\infty)$ at $p=3,43$ and determine which Weierstrass point lie at the same supersingular point as them. We obtained that $\nu_{43}(a_{R_0^{35}})=1$ and $\nu_3(a_{R_0^{7}})=2$. We can not control the 2-valuation of any leading coefficient $a_{R_i}$ but we know the valuation of the product $4=\nu_2(a_R)=\sum_i\nu_2(a_{R_i})$ and this gives an upper bound for all of them.

Finally, applying the rest of the algorithm of \S\ref{Algorithm}, we obtained that $Y$ is defined by the equation:
\[
\boxed{ y^2=-x\cdot (9x+4)\cdot(4x+1)\cdot(172x^3 + 176x^2 + 60x + 7). }
\]

\subsection{Results}\label{results}

In this section we present a table with all the equations obtained using the algorithms explained in \S \ref{Algorithm} and \S \ref{ALquo}:
$$
\begin{array}{c|c|c}
g & \mbox{\emph{curve}} & y^2=p(x)\\ \hline
3 & X_0^{39} & y^2=-(7x^4 + 79x^3 + 311x^2 + 497x + 277)\cdot(x^4 + 9x^3 + 29x^2 + 39x + 19)\\[2pt]
3 & X_0^{55} & y^2=-(3x^4 - 32x^3 +130x^2 - 237x + 163)\cdot(x^4 - 8x^3 + 34x^2 - 83x + 81)\\[2pt]
2 & X_0^{35}/\langle\omega_5\rangle & y^2=-x\cdot (9x+4)\cdot(4x+1)\cdot(172x^3 + 176x^2 + 60x + 7)\\[2pt]
2 & X_0^{51}/\langle\omega_{17}\rangle & y^2=-x\cdot(7x^3 + 52x^2 + 116x + 68)\cdot(x-1)\cdot(x+3)\\[2pt]
2 & X_0^{57}/\langle\omega_{3}\rangle & y^2=-(x-9)\cdot(x^3 - 19x^2 + 119x - 249)\cdot(7x^2 - 104x + 388)\\[2pt]
2 & X_0^{65}/\langle\omega_{13}\rangle & y^2=-(x^2 - 3x + 1)\cdot(7x^4 - 3x^3 - 32x^2 + 25x - 5)\\[2pt]
2 & X_0^{65}/\langle\omega_{5}\rangle & y^2=-(x^2 + 7x + 9)\cdot(7x^4 + 81x^3 + 319x^2 + 508x + 268)\\[2pt]
2 & X_0^{69}/\langle\omega_{23}\rangle & y^2=-x\cdot(x+4)\cdot(4x^4 - 16x^3 + 11x^2 + 10x + 3)\\[2pt]
2 & X_0^{85}/\langle\omega_{5}\rangle & y^2=-(3x^2 - 41x + 133)\cdot(x^4 - 23x^3 + 183x^2 - 556x + 412)\\[2pt]
2 & X_0^{85}/\langle\omega_{85}\rangle & y^2=(x^2 - 3x + 1)\cdot (x^4 + x^3 - 15x^2 + 20x - 8)\\[2pt]
\end{array}
$$
\begin{centerline}{\bf Table 2}\end{centerline}

\bibliographystyle{plain}

\bibliography{santi}

\begin{thebibliography}{10}

\bibitem{Mag}
W.~Bosma, J.~Cannon, and C.~Playoust.
\newblock The {M}agma algebra system. {I}. {T}he user language.
\newblock {\em J. Symbolic Comput.}, 24(3-4):235--265, 1997.
\newblock Computational algebra and number theory (London, 1993).

\bibitem{BC}
J.-F. Boutot and H.~Carayol.
\newblock Uniformisation {$p$}-adique des courbes de {S}himura: les
  th\'eor\`emes de \v {C}erednik et de {D}rinfeld.
\newblock {\em Ast\'erisque}, (196-197):7, 45--158 (1992), 1991.
\newblock Courbes modulaires et courbes de Shimura (Orsay, 1987/1988).

\bibitem{Cer}
I.~V. {\v{C}}erednik.
\newblock Uniformization of algebraic curves by discrete arithmetic subgroups
  of {${\rm PGL}\sb{2}(k\sb{w})$} with compact quotient spaces.
\newblock {\em Mat. Sb. (N.S.)}, 100(142)(1):59--88, 165, 1976.

\bibitem{Drin}
V.~G. Drinfeld.
\newblock Coverings of {$p$}-adic symmetric domains.
\newblock {\em Funkcional. Anal. i Prilo\v zen.}, 10(2):29--40, 1976.

\bibitem{Edix}
B.~Edixhoven.
\newblock Appendix of: A rigid analytic {G}ross-{Z}agier formula and arithmetic
  applications.
\newblock {\em Ann. of Math. (2)}, 146(1):111--147, 1997.
\newblock Article by Maximo Bertolini and Henri Darmon.

\bibitem{Elk}
N.~D. Elkies.
\newblock Shimura curve computations.
\newblock In {\em Algorithmic number theory ({P}ortland, {OR}, 1998)}, volume
  1423 of {\em Lecture Notes in Comput. Sci.}, pages 1--47. Springer, Berlin,
  1998.

\bibitem{GoRo2}
J.~Gonz{\'a}lez and V.~Rotger.
\newblock Equations of {S}himura curves of genus two.
\newblock {\em Int. Math. Res. Not.}, (14):661--674, 2004.

\bibitem{GoRo1}
J.~Gonz{\'a}lez and V.~Rotger.
\newblock Non-elliptic {S}himura curves of genus one.
\newblock {\em J. Math. Soc. Japan}, 58(4):927--948, 2006.

\bibitem{GrossZagier}
B.~H. Gross and D.~B. Zagier.
\newblock Heegner points and derivatives of {$L$}-series.
\newblock {\em Invent. Math.}, 84(2):225--320, 1986.

\bibitem{M-G}
X.~Guitart and S.~Molina.
\newblock Parametrization of abelian $k$-surfaces with quaternionic
  multiplication.
\newblock {\em Comptes rendus - Mathematique}, (347):1325--1330, 2009.

\bibitem{Ihara}
Y.~Ihara.
\newblock Congruence relations and {S}him\=ura curves.
\newblock In {\em Automorphic forms, representations and {$L$}-functions
  ({P}roc. {S}ympos. {P}ure {M}ath., {O}regon {S}tate {U}niv., {C}orvallis,
  {O}re., 1977), {P}art 2}, Proc. Sympos. Pure Math., XXXIII, pages 291--311.
  Amer. Math. Soc., Providence, R.I., 1979.

\bibitem{Jor}
B.~W. Jordan.
\newblock On the diophantine arithmetic of {S}himura curves.
\newblock {\em Harvard University, Illinois}, 1981.
\newblock Ph.D. thesis.

\bibitem{Jor2}
B.~W. Jordan and R.~A. Livn{\'e}.
\newblock Local {D}iophantine properties of {S}himura curves.
\newblock {\em Math. Ann.}, 270(2):235--248, 1985.

\bibitem{KoRo}
A.~Kontogeorgis and V.~Rotger.
\newblock On the non-existence of exceptional automorphisms on {S}himura
  curves.
\newblock {\em Bull. London Math. Soc.}, (40):363--374, 2008.

\bibitem{Kur2}
A.~Kurihara.
\newblock On some examples of equations defining {S}himura curves and the
  {M}umford uniformization.
\newblock {\em J. Fac. Sci. Univ. Tokyo Sect. IA Math.}, 25(3):277--300, 1979.

\bibitem{Kur}
A.~Kurihara.
\newblock On {$p$}-adic {P}oincar\'e series and {S}himura curves.
\newblock {\em Internat. J. Math.}, 5(5):747--763, 1994.

\bibitem{liu96}
Q.~Liu.
\newblock Mod\`eles entiers des courbes hyperelliptiques sur un corps de
  valuation discr\`ete.
\newblock {\em Trans. Amer. Math. Soc.}, 348(11):4577--4610, 1996.

\bibitem{liu02}
Q.~Liu.
\newblock {\em Algebraic geometry and arithmetic curves}, volume~6 of {\em
  Oxford Graduate Texts in Mathematics}.
\newblock Oxford University Press, Oxford, 2002.
\newblock Translated from the French by Reinie Ern{\'e}, Oxford Science
  Publications.

\bibitem{Knu-Klei}
K.~L{\o}nsted and S.~L. Kleiman.
\newblock Basics on families of hyperelliptic curves.
\newblock {\em Compositio Math.}, 38(1):83--111, 1979.

\bibitem{Mol}
S.~Molina.
\newblock Ribet bimodules and specialization of {H}eegner points.
\newblock {\em submited}.

\bibitem{Mo}
Y.~Morita.
\newblock Reduction modulo {${\mathfrak P}$} of {S}himura curves.
\newblock {\em Hokkaido Math. J.}, 10(2):209--238, 1981.

\bibitem{Ogg}
A.~P. Ogg.
\newblock Real points on {S}himura curves.
\newblock In {\em Arithmetic and geometry, {V}ol. {I}}, volume~35 of {\em
  Progr. Math.}, pages 277--307. Birkh\"auser Boston, Boston, MA, 1983.

\bibitem{Riben}
P.~Ribenboim.
\newblock Equivalent forms of {H}ensel's lemma.
\newblock {\em Exposition. Math.}, 3(1):3--24, 1985.

\bibitem{Rib2}
K.~A. Ribet.
\newblock Endomorphism algebras of abelian varieties attached to newforms of
  weight {$2$}.
\newblock In {\em Seminar on {N}umber {T}heory, {P}aris 1979--80}, volume~12 of
  {\em Progr. Math.}, pages 263--276. Birkh\"auser Boston, Mass., 1981.

\bibitem{Rib}
K.~A. Ribet.
\newblock Bimodules and abelian surfaces.
\newblock In {\em Algebraic number theory}, volume~17 of {\em Adv. Stud. Pure
  Math.}, pages 359--407. Academic Press, Boston, MA, 1989.

\bibitem{Sad}
M.~Sadykov.
\newblock Two results in the arithmetic of {S}himura curves.
\newblock {\em Columbia University, New York}, 2004.
\newblock Ph.D. thesis.

\bibitem{Shim}
G.~Shimura.
\newblock Construction of class fields and zeta functions of algebraic curves.
\newblock {\em Ann. of Math. (2)}, 85:58--159, 1967.

\bibitem{Shimura}
G.~Shimura.
\newblock On the real points of an arithmetic quotient of a bounded symmetric
  domain.
\newblock {\em Math. Ann.}, 215:135--164, 1975.

\bibitem{Shi}
T.~Shioda.
\newblock Supersingular {$K3$} surfaces.
\newblock In {\em Algebraic geometry ({P}roc. {S}ummer {M}eeting, {U}niv.
  {C}openhagen, {C}openhagen, 1978)}, volume 732 of {\em Lecture Notes in
  Math.}, pages 564--591. Springer, Berlin, 1979.

\bibitem{S-S}
S.~Siksek and A.~Skorobogatov.
\newblock On a {S}himura curve that is a counterexample to the {H}asse
  principle.
\newblock {\em Bull. London Math. Soc.}, 35(3):409--414, 2003.

\bibitem{Vig}
M.-F. Vign{\'e}ras.
\newblock {\em Arithm\'etique des alg\`ebres de quaternions}, volume 800 of
  {\em Lecture Notes in Mathematics}.
\newblock Springer, Berlin, 1980.

\bibitem{Voi}
J.~Voight.
\newblock Shimura curves of genus at most two.
\newblock {\em Math. Comp.}, 78(266):1155--1172, 2009.

\end{thebibliography}

\end{document}